\documentclass[12pt]{article}

\usepackage{times}

\usepackage[shortlabels]{enumitem}
\usepackage{enumitem}
\usepackage[utf8]{inputenc}
\usepackage{commath}
\usepackage{amsthm, amscd}
\usepackage{amsmath}
\usepackage{amssymb}
\usepackage{cite}
\usepackage{setspace}
\usepackage{ stmaryrd }
\usepackage{tikz-cd}

\usepackage{tocloft}
\usepackage{sectsty}
\usepackage{xparse}

\newcommand*\tasklabelformat[1]{#1)}

\usepackage{titlesec}

\usepackage{tasks}[newest]
\settasks{
  label = \alph* ,
  label-format = \tasklabelformat ,
  label-width  = 12pt
}

\usepackage{bigints}
\usepackage{comment}
\usepackage{mathtools}
\usepackage{mathrsfs}
\usepackage{fancyhdr}

\allowdisplaybreaks[4]

\numberwithin{equation}{section}
\usepackage{etoolbox}
\patchcmd{\thebibliography}
  {\settowidth}
  {\setlength{\itemsep}{0pt plus -10pt}\settowidth}
  {}{}
\apptocmd{\thebibliography}
  {
  }
  {}{}
\makeatletter
\newtheorem*{rep@theorem}{\rep@title}
\newcommand{\newreptheorem}[2]{%
\newenvironment{rep#1}[1]{%
 \def\rep@title{#2 \ref{##1}}%
 \begin{rep@theorem}}%
 {\end{rep@theorem}}}
\makeatother

\theoremstyle{theorem}

\newreptheorem{theorem}{Theorem}
\newtheorem{thm}{Theorem}[section]
\newtheorem*{thm*}{Theorem}
\theoremstyle{definition}
\newtheorem{prop}[thm]{Proposition}
\newtheorem*{prop*}{Proposition}

\newtheorem{lem}[thm]{Lemma}
\newtheorem{cor}[thm]{Corollary}
\newtheorem*{cor*}{Corollary}
\theoremstyle{remark}

\newtheorem{rem}[thm]{Remark}

\title{\vspace*{-1.5cm} Kodaira-Iitaka dimension and multiplicity: \\
an analytic perspective}

\author
{Siarhei Finski
}

\date{}

\usepackage[%
    left=1in,%
    right=1in,%
    top=1.1in,%
    bottom=0.8in,%
    paperheight=11in,%
    paperwidth=8.5in%
]{geometry}

\newcommand{\imun} {\sqrt{-1}}
\newcommand{\vol}{v}

\newcommand{\sym}{{\rm{Sym}}}

\newcommand{\comp}{\mathbb{C}}
\newcommand{\real}{\mathbb{R}}

\newcommand{\nat}{\mathbb{N}}

\newcommand{\dbar}{ \overline{\partial} }

\newcommand{\ddc}{\mathrm{d} \mathrm{d}^c}

\renewcommand{\Im}{\operatorname{Im}}

\newcommand{\ban}{\operatorname{Ban}}

\newcommand{\psh}{\operatorname{PSH}}

\makeatletter
\DeclareFontFamily{OMX}{MnSymbolE}{}
\DeclareSymbolFont{MnLargeSymbols}{OMX}{MnSymbolE}{m}{n}
\SetSymbolFont{MnLargeSymbols}{bold}{OMX}{MnSymbolE}{b}{n}
\DeclareFontShape{OMX}{MnSymbolE}{m}{n}{
    <-6>  MnSymbolE5
   <6-7>  MnSymbolE6
   <7-8>  MnSymbolE7
   <8-9>  MnSymbolE8
   <9-10> MnSymbolE9
  <10-12> MnSymbolE10
  <12->   MnSymbolE12
}{}
\DeclareFontShape{OMX}{MnSymbolE}{b}{n}{
    <-6>  MnSymbolE-Bold5
   <6-7>  MnSymbolE-Bold6
   <7-8>  MnSymbolE-Bold7
   <8-9>  MnSymbolE-Bold8
   <9-10> MnSymbolE-Bold9
  <10-12> MnSymbolE-Bold10
  <12->   MnSymbolE-Bold12
}{}

\let\llangle\@undefined
\let\rrangle\@undefined
\DeclareMathDelimiter{\llangle}{\mathopen}%
                     {MnLargeSymbols}{'164}{MnLargeSymbols}{'164}
\DeclareMathDelimiter{\rrangle}{\mathclose}%
                     {MnLargeSymbols}{'171}{MnLargeSymbols}{'171}
\makeatother

\newenvironment{sciabstract}{}

\cftsetindents{section}{0em}{1.7em}

\sectionfont{\large}

\titlespacing*{\section}
{0pt}{5pt}{5pt}

\begin{document}

\maketitle 

\vspace*{-0.7cm}

\vspace*{0.3cm}

\begin{sciabstract}
  \textbf{Abstract.} 
  	We express the Kodaira-Iitaka dimension and the multiplicity of graded linear series in terms of the intersection theory of the plurisubharmonic envelope associated with the linear series, and obtain two refined versions of these formulas at the pointwise and at the metric levels.
	\par 
	At the pointwise level, we focus on the weak convergence of the partial Bergman kernel associated with the linear series and a Bernstein-Markov measure.
	At the metric level, we compute the asymptotic ratio of the volumes of unit balls defined by the sup-norms on the linear series.
	\par 
	Based on our findings, we introduce a non-pluripolar version of the numerical Kodaira-Iitaka dimension for a line bundle, show that this invariant dominates the classical Kodaira-Iitaka dimension and is, in turn, bounded above by the numerical versions proposed so far.
\end{sciabstract}

\pagestyle{fancy}
\lhead{}
\chead{Kodaira-Iitaka dimension and multiplicity: an analytic perspective}
\rhead{\thepage}
\cfoot{}


\newcommand{\Addresses}{{
  \bigskip
  \footnotesize
  \noindent \textsc{Siarhei Finski, CNRS-CMLS, École Polytechnique F-91128 Palaiseau Cedex, France.}\par\nopagebreak
  \noindent  \textit{E-mail }: \texttt{finski.siarhei@gmail.com}.
}} 

\vspace*{0.25cm}

\par\noindent\rule{1.25em}{0.4pt} \textbf{Table of contents} \hrulefill

\vspace*{-1.5cm}

\tableofcontents

\vspace*{-0.2cm}

\noindent \hrulefill


\section{Introduction}\label{sect_intro}
	We fix a connected complex projective manifold $X$, $\dim_{\comp} X = n$, and a holomorphic line bundle $L$ over $X$.
	The main object of the present study is a \textit{(graded) linear series} $W = \oplus_{k = 0}^{\infty} W_k$, $W_0 = \comp$, defined as a subring of the associated \textit{section ring}
	\begin{equation}
		R(X, L) := \oplus_{k = 0}^{\infty} H^0(X, L^{\otimes k}).
	\end{equation}
	To exclude some trivial cases, we make an additional assumption that $W \neq \comp$, implying in particular that $L$ is \textit{effective}, i.e., $H^0(X, L^{\otimes k}) \neq \{0\}$ for $k \in \nat^*$ large enough.
	Without loosing the generality (as otherwise one could restrict to a tensor power of $L$), we further assume
	\begin{equation}\label{eq_lin_series_ndg}
		W_k \neq \{ 0 \}, \quad \text{for } k \in \nat \text{ large enough}.
	\end{equation}
	\par 
	Using Okounkov bodies, Kaveh-Khovanskii in \cite[Theorem 3.4]{KavehKhov} established that there is $\kappa(W) \in \{0, 1, \ldots, n\}$, called the \textit{Kodaira-Iitaka dimension} of $W$, so that the following limit
	\begin{equation}\label{eq_exist_volume}
		{\rm{vol}}_{\kappa}(W)
		:=
		\lim_{k \to \infty} \frac{\dim W_k}{k^{\kappa(W)} / \kappa(W)!}
	\end{equation}
	exists and is positive; see also Lazarsfeld-Mustață \cite{LazMus} for the regime $\kappa(W) = n$, and Iitaka \cite{Iitaka}, Fujita \cite{Fujita} for the previous works in realms of the complete linear series, that is, $W = R(X,L)$.
	We call the limit in (\ref{eq_exist_volume}) the \textit{multiplicity} of $W$.
	When $W$ is a complete linear series, we call the above quantity the multiplicity of $L$ and denote it by ${\rm{vol}}_{\kappa}(L)$. 
	The corresponding Kodaira-Iitaka dimension is denoted by $\kappa(L)$ and is called the Kodaira-Iitaka dimension of $L$.
	\par 
	This paper has three complementary aims: to establish analytic formulas for the Kodaira-Iitaka dimension and the multiplicity of graded linear series, and to refine these formulas at both the pointwise and metric levels.
	By the pointwise level, we refer to the asymptotic analysis of the corresponding partial Bergman kernels. 
	By the metric level, we mean comparing the sequences of norms on $W_k$ induced by the sup-norms associated with two continuous metrics on $L$.
	\par 
	We begin with the former formula, which will be based on a notion of a plurisubharmonic (psh) envelope of a metric associated with the linear series.
	The latter definition can be traced back to Siciak \cite{SiciakEnvFirst}; in the current context of linear series it was introduced by Hisamoto \cite{HisamVolume}.
	To recall it, for any norm $N_k$ on $W_k$, we associate a singular metric $FS(N_k)$ on $L^{\otimes k}$ as
	\begin{equation}\label{eq_fs_norm}
		|l|_{FS(N_k), x}
		=
		\inf_{\substack{s \in H^0(X, L^{\otimes k}) \\ s(x) = l}}
		\| s \|_{N_k}, 
		\quad 
		\text{for any } x \in X, l \in L^{\otimes k}_x,
	\end{equation}
	with the convention that the infimum equals $+ \infty$ if there is no $s \in W_k$ such that $s(x) = l$.
	\par 
	It is easy to see, see Lemma \ref{lem_fs_sm}, that for any continuous metric $h^L$ on $L$, the sequence of Fubini-Study (singular) metrics associated with the sup-norms $\textrm{Ban}_k^{\infty}[W](h^L)$ on $W_k$, $k \in \nat^*$, is submultiplicative.
	In particular, by the Fekete's lemma, the following limit
	\begin{equation}\label{eq_env_w}
		P[W, h^L]
		=
		\Big(
		\lim_{k \to \infty} FS({\textrm{Ban}}_k^{\infty}[W](h^L))^{\frac{1}{k}}
		\Big)_*
	\end{equation}
	exists as a (singular) metric on $L$, where $(\cdot)_*$ is the lower semicontinuous regularization.
	Standard results imply that $P[W, h^L]$ has a psh potential, \cite[Proposition I.4.24]{DemCompl}.
	\par
	Define the closed positive $(1,1)$-current $c_1(L, P[W, h^L])$ as described after (\ref{eq_c1_bc}).
	On $X$, fix a Kähler form $\omega$ and consider the following measures, defined in the non-pluripolar, see \cite{BEGZ}, sense
	\begin{equation}\label{eq_meas_x_wedge}
		c_1(L, P[W, h^L])^{i} \wedge \omega^{n - i}, \qquad i \in \{0, 1, \ldots, n\}.
	\end{equation}
	Although the above measures in general depend on $\omega$, many of the associated quantities turn out to be independent of it, as shown below.
	\begin{thm}\label{thm_ki_form}
		For any graded linear series $W \subset R(X, L)$, we have
		\begin{equation}\label{eq_kappa_w}	
			\kappa(W) = \max\Big\{
				i \in \{ 0, 1, \ldots, n \} :  c_1(L, P[W, h^L])^{i} \wedge \omega^{n - i} \neq 0
			\Big\}.
		\end{equation}
	\end{thm}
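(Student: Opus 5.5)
The plan is to reduce to the geometry of the Kodaira (Iitaka) map of $W$ and to compare both sides on a resolution. Since $\kappa := \kappa(W)$ is characterized via Kaveh-Khovanskii as the growth order of $\dim W_k$, it also equals the dimension of the image of the rational maps $\Phi_k : X \dashrightarrow \mathbb{P}(W_k^*)$ for $k$ large and divisible. I would first show that $P[W, h^L]$ is, up to a bounded function, the Fubini-Study type metric $\lim_k FS(\text{any fixed norm on } W_k)^{1/k}$. Indeed, the sup-norms and any Hermitian norm on the finite-dimensional space $W_k$ are equivalent, and the equivalence constants enter only through the $k$-th root in \eqref{eq_env_w}. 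Consequently, locally, $\phi := $ the potential of $P[W,h^L]$ satisfies
\[
\phi = \sup_k \tfrac{1}{k}\log \textstyle\sum_j |s_{k,j}|^2 + O(1)\text{-corrections},
\]
with $s_{k,j}$ a basis of $W_k$. The non-pluripolar products in \eqref{eq_meas_x_wedge} do not change when $\phi$ is replaced by a potential with the same singularity type in the relevant sense. More precisely, I will use monotonicity of non-pluripolar masses (BEGZ) for less singular potentials: this gives an upper bound from $\phi \ge \frac{1}{k}\log\sum|s_{k,j}|^2 - C_k$.

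For the lower bound $\kappa(W) \le \max\{\dots\}$, fix $k$ with $\dim \Phi_k(X) = \kappa$. On the open set $U$ where $\Phi_k$ is defined and has rank $\kappa$, the current $\frac{1}{k}\Phi_k^*\omega_{FS}$ is a smooth semipositive form of rank $\kappa$. Hence $(\frac1k\Phi_k^*\omega_{FS})^\kappa\wedge\omega^{n-\kappa} > 0$ on $U$. Since $\phi \ge \phi_k := \frac1k \log\sum_j|s_{k,j}|^2 - C$, monotonicity of total non-pluripolar mass for mixed products with $\omega$ gives
\[
\int c_1(L,P)^\kappa\wedge\omega^{n-\kappa} \ge \int_U (dd^c\phi_k)^\kappa\wedge\omega^{n-\kappa} > 0.
\]
I would apply BEGZ's comparison in the form "less singular implies larger mass", which holds for mixed products with a smooth Kähler form, in its version for big classes. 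If needed, I would prove it directly by working on a modification.

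For the upper bound, I need $c_1(L,P)^{i}\wedge\omega^{n-i}=0$ for $i>\kappa$. I would show that $\phi$ is locally a function of $\Phi_k$ for one fixed sufficiently divisible $k$ (after passing to the Iitaka fibration). On the locus where all $\Phi_{mk}$ are defined, the fibers of $\Phi_{mk}$ coincide with those of $\Phi_k$ (up to finite maps) by the stabilization of the Iitaka fibration. Each $\frac{1}{mk}\log\sum|s_{mk,j}|^2$ is then a pullback of a psh function on $\Phi_{k}(X)$, a space of dimension $\kappa$, modulo a pluriharmonic term. The upper envelope and its regularization keep this property on the Zariski open set $V$ off the base locus. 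There, $dd^c\phi = \Phi_k^* T$ for a closed positive current $T$ on a $\kappa$-dimensional base, so its non-pluripolar $i$-th power vanishes for $i>\kappa$. The complement of $V$ is pluripolar, since it is contained in the base loci, which form analytic subsets. Non-pluripolar products put no mass there, so the measure vanishes.

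The main obstacle is this last step: making rigorous that the regularized envelope is a pullback from the Iitaka base with genuine psh potential there. This requires controlling all $k$ simultaneously, because the $\Phi_{mk}$ may differ by finite maps, and handling the fact that the base of the Iitaka fibration is singular. I would first try to resolve $\Phi_k$ to a morphism $\tilde X\to Y$ onto a smooth model. I would then use that sections of $W_{mk}$ are pulled back from sections of line bundles on $Y$ via the finitely generated Veronese-type stabilization, or otherwise argue locally on fibers that $\phi$ is constant along connected fibers, since a psh function bounded above on compact fibers is constant there.
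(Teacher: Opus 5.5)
Your proposal is correct in outline, but it takes a different route from the paper.

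\textbf{Comparison with the paper.} The paper proves Theorems \ref{thm_ki_form} and \ref{thm_vol_form} together. The sup-norm Fubini--Study potentials along $m=2^j$ increase to the potential of $P[W,h^L]$, so Theorem \ref{thm_cont_begz} writes each mass $\int_X c_1(L,P[W,h^L])^i\wedge\omega^{n-i}$ as a limit of the masses of the approximants. On the resolution these are pulled back from the $\kappa(W)$-dimensional $\widehat{Y}_m$. Hence they vanish for $i>\kappa(W)$ by Remark \ref{rem_zero_prod}. For $i=\kappa(W)$, the projection formula and Theorem \ref{thm_sym_subalgebra} identify the limit with $\varphi_*[\omega]^{n-\kappa(W)}\cdot{\rm vol}_\kappa(W)>0$. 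So in the paper, non-vanishing in degree $\kappa(W)$ is a by-product of the multiplicity formula.

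Your lower bound is cheaper and correct as written. Since $\phi\geq \frac1k\log\sum_j|s_{k,j}|^2-C_k$ for one fixed $k$, Theorem \ref{thm_wn_monot} bounds the mass from below by $\int_U (k^{-1}\Phi_k^*\omega_{FS})^{\kappa}\wedge\omega^{n-\kappa}>0$. That theorem is stated without any bigness assumption, so no modification is needed, and you use neither Fujita approximation nor a continuity theorem. Your upper bound trades the continuity theorem for a local analysis of the envelope itself. What your route does not give is the value of the mass, which is what the paper's argument is built to compute.

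\textbf{Closing the step you flag.} It is easier than you fear: it needs neither Chang--Jow, nor finite-map bookkeeping, nor a smooth model of the base.

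Take $k$ large and let $U_0:=\{x\notin Bs(W_k):\operatorname{rank} d\Phi_k(x)=\kappa\}$, a dense Zariski open set. For every $m$ we have $Bs(W_{mk})\subset Bs(W_k)$, because $t^m\in W_{mk}$ for $t\in W_k$. Off $Bs(W_k)$ we also have $\nu_m\circ\Phi_k=q_m\circ\Phi_{mk}$, where $\nu_m$ is the Veronese embedding and $q_m$ is the linear projection dual to ${\rm Sym}^m W_k\to W_{mk}$. The map $q_m$ is regular at $\Phi_{mk}(x)$, since $t(x)^m\neq 0$ for a suitable $t$. Since $\Phi_{mk}$ has rank at most $\dim Y_{mk}=\kappa$ everywhere, the chain rule gives $\ker d\Phi_{mk}=\ker d\Phi_k$ on $U_0$, uniformly in $m$.

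Now work with the sup-norm potentials $\phi_{mk}$, for which $\phi=(\lim_m\phi_{mk})^*$ exactly. Your claim that an arbitrary choice of norms on $W_k$ yields the same envelope is false unless the comparison constants with the sup-norms are subexponential. Also choose the pluriharmonic correction independently of $m$. For $t\in W_k$ and a basis $(s_j)$ of $W_{mk}$, on $\{t\neq 0\}$ one has $\phi_{mk}-\frac1k\log|t|_{(h^L)^k}=\frac1{mk}\log\|(s_j/t^m)_j\|_*$, where $\|\cdot\|_*$ is the norm dual to the sup-norm. This is a function of $\Phi_{mk}$. Hence it is constant on the plaques $\{z\}\times B^{n-\kappa}$ of a product chart of $\Phi_k$ inside $U_0\cap\{t\neq0\}$.

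Limits and regularizations preserve this, since $(u\circ{\rm pr})^*=u^*\circ{\rm pr}$. So on such a chart $c_1(L,P[W,h^L])=dd^c(u^*\circ{\rm pr})$, with $u^*$ psh in $\kappa$ variables. The truncations $\max(u^*,-j)\circ{\rm pr}$ then show that the non-pluripolar powers of order $>\kappa$ vanish there. Finally, $X\setminus U_0$ is a proper analytic subset, so it carries no non-pluripolar mass.

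\textbf{Drop the fallback via compact fibres.} It does not work, for two reasons:
\begin{itemize}
\item $\phi$ is only $\alpha$-psh and is in general not constant along fibres. Take $W_k=\pi_1^*H^0(\mathbb{P}^1,\mathscr{O}(k))\cdot\sigma^k\subset H^0(\mathbb{P}^1\times\mathbb{P}^1,\mathscr{O}(k,k))$, with $\sigma$ a section of $\mathscr{O}(1)$ pulled back from the second factor and $h^L$ a product metric. Then $\phi$ restricted to $\{z\}\times\mathbb{P}^1$ is $\log|\sigma|$ plus a constant.
\item The function that is fibrewise constant, $\phi-\frac1k\log|t|_{(h^L)^k}$, is only defined off $\{t=0\}$, so no compactness argument applies to it directly.
\end{itemize}
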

	\begin{rem}
		At first sight, the condition in (\ref{eq_kappa_w}) appears to depend on the choice of the Kähler form $\omega$ and of the metric $h^L$, but this is not the case; see the end of Section \ref{sect_fs_inter} for details.
	\end{rem}
	\par 
	Our next result gives an analytic expression for the multiplicity of a graded linear series.
	It will involve another important invariant of the linear series, which we now recall.
	\par 
	Recall that any linear series defines the rational maps $X \dashrightarrow \mathbb{P}(W_m^*)$ that send a point $x \in X$ to the hyperplane $H_{m, x} \subset W_m$ consisting of all sections in $W_m$ that vanish at $x$ (and are undefined whenever all sections in $W_m$ vanish at $x$).
	We denote by $Y_m$ the closure of the image of this map.
	A theorem of Chang-Jow \cite[Theorem 1.1]{ChangJow} states that the rational maps 
	\begin{equation}\label{eq_ki_maps}
		\varphi_m : X \dashrightarrow Y_m
	\end{equation}
	are birationally equivalent for $m$ large enough.
	It is well-known, cf. \cite[Theorem 3.15]{BoucksomBourbaki}, that 
	\begin{equation}\label{eq_dim_ki_yk}
		\kappa(W) = \dim(Y_m), \quad \text{for } m \in \nat \text{ large enough}.
	\end{equation}
	We say that $W$ is \textit{birational} if the maps (\ref{eq_ki_maps}) are birational for $m$ large enough, cf. \cite[Definition 2.5]{LazMus}.
	Taking into account (\ref{eq_dim_ki_yk}), we see that birational linear series only exist when $L$ is big, i.e., such that the Kodaira-Iitaka dimension of $L$ is $n$.
	\par 
	We denote by $[\omega] \in H^{1, 1}(X) \cap H^2(X, \real)$ the cohomology class of $\omega$, and by $\varphi_{*} [\omega]^{n - \kappa(W)}$  the fiber integral of $\omega$ along a general fiber of $\varphi_m$ for $m$ large enough.
	By the result recalled after (\ref{eq_ki_maps}), this quantity depends only on the cohomology class $[\omega]$, as indicated by the notation.
	\begin{thm}\label{thm_vol_form}
		For any graded linear series $W \subset R(X, L)$, we have
		\begin{equation}\label{eq_vol_form}
			{\rm{vol}}_{\kappa}(W)
			=
			\frac{1}{\varphi_{*} [\omega]^{n - \kappa(W)}} \int_X c_1(L, P[W, h^L])^{\kappa(W)} \wedge \omega^{n - \kappa(W)}.
		\end{equation}
	\end{thm}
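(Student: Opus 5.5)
Our strategy is to reduce to the birational (big) case on the image variety $Y_m$ and then apply the known volume formula for birational linear series there. Fix $m$ large enough that the maps $\varphi_m$ of (\ref{eq_ki_maps}) are birationally equivalent and $\kappa := \kappa(W) = \dim Y_m$. After a resolution of indeterminacies $\pi : \widehat{X} \to X$ and a resolution $\widehat{Y} \to Y_m$, we obtain a morphism $\widehat{\varphi} : \widehat{X} \to \widehat{Y}$ with connected general fibers (after Stein factorization). Both sides of (\ref{eq_vol_form}) are unchanged by the pullback $\pi$:
\begin{itemize}
\item[(a)] the left side holds because $H^0$ of pullbacks coincide;
\item[(b)] the right side holds because non-pluripolar products do not charge the exceptional locus, $P[W, \pi^* h^L] = \pi^* P[W, h^L]$, and $\pi^*\omega$ can be replaced by a Kähler form up to $\epsilon$, with continuity in $\epsilon$ by multilinearity.
\end{itemize}
So we may assume $\varphi : X \to Y$ is a morphism.

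\textbf{Step 1: descend $W$ to $Y$.} Using that the Veronese subseries $W^{(m)}$ is generated, up to finite index, by pullbacks, we realize $W$ (asymptotically) as $\varphi^* V$ for a birational graded linear series $V$ of an ample-ish line bundle $A$ on $Y$, twisted by a fixed divisor. Then ${\rm{vol}}_\kappa(W) = {\rm{vol}}(V)$, the usual volume on the $\kappa$-dimensional $Y$.

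\textbf{Step 2: identify the envelope.} We aim to show that $P[W,h^L]$ equals $\varphi^* P[V, h^A]$ modulo the fixed divisorial part. Sup-norms of $\varphi^* s$ on $X$ equal sup-norms of $s$ on $Y$ for an induced continuous metric, so the Fubini–Study metrics match. Consequently $c_1(L,P[W,h^L]) = \varphi^* T$ off a pluripolar set, where $T = c_1(A, P[V,h^A])$.

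\textbf{Step 3: compute the right-hand side.} In the non-pluripolar sense,
\[
(\varphi^*T)^{\kappa} \wedge \omega^{n-\kappa} = \varphi^*(T^{\kappa}) \wedge \omega^{n-\kappa}.
\]
Hence, by fiber integration,
\[
\int_X (\varphi^*T)^{\kappa}\wedge\omega^{n-\kappa} = \int_Y \langle T^{\kappa}\rangle \cdot \varphi_*[\omega]^{n-\kappa},
\]
since the fiber integral is a constant, namely the cohomological number $\varphi_*[\omega]^{n-\kappa}$, on a Zariski-open set (which is all a non-pluripolar measure sees). The result then follows from the big/birational case on $Y$: $\int_Y \langle T^\kappa \rangle = {\rm{vol}}(V)$. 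This holds by the Berman–Boucksom / Hisamoto / Witt Nyström equality between volume and the non-pluripolar mass of the envelope for birational series, or via Okounkov bodies.

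\textbf{Main obstacle.} The hardest step is Step 1–2: the descent of $W$ to a birational series on $Y$. Generation of $W$ is only asymptotic, $V$ need not be finitely generated, and the map $\varphi_m$ depends on $m$. We also need the identification of envelopes to hold exactly in non-pluripolar mass, not just up to $o(1)$. To handle this we plan to approximate $W$ from inside by the subseries generated by $W_{m}$ for $m$ large. For these, ${\rm{vol}}_\kappa$ converges to ${\rm{vol}}_\kappa(W)$ by the Kaveh–Khovanskii Okounkov-body approximation. The non-pluripolar masses converge by monotonicity of envelopes, using the increasing limit of psh potentials and the monotone continuity of non-pluripolar products in \cite{BEGZ}.
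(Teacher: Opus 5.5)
Your fallback in the last paragraph is what actually proves the theorem, and it is essentially the paper's argument. You replace $W$ by level-$m$ data that are exact pullbacks from $Y_m$ twisted by $E_m$. You compute their non-pluripolar mass by fiber integration (Proposition~\ref{prop_proj_fla}) as $m^{-\kappa(W)}\,\varphi_*[\omega]^{n-\kappa(W)}\,{\rm{vol}}(\widehat{A}_m)$. Then you let $m\to\infty$, using Theorem~\ref{thm_sym_subalgebra} for the volumes and monotone continuity (Theorem~\ref{thm_cont_begz}) for the masses.

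The paper's execution is leaner. Rather than envelopes of $[{\rm{Sym}}\, W_m]$, it uses directly $h^L_m := FS({\textrm{Ban}}_m^{\infty}[W](h^L))^{1/m}$, $m = 2^k$. These decrease to $P[W, h^L]$ by Lemma~\ref{lem_fs_sm} and the definition (\ref{eq_env_w}). By (\ref{eq_fs_n_k_a_k_rel}) they equal $\widehat{\varphi}_m^* \widehat{h}^A_m \cdot h^{E_m}_{{\rm{sing}}}$, where $\widehat{h}^A_m$ is \emph{continuous} with psh potential. So there is no envelope to identify on $Y_m$ and no need for the birational volume formula: $\int_{\widehat{Y}_m} c_1(\widehat{A}_m, \widehat{h}^A_m)^{\kappa(W)} = {\rm{vol}}(\widehat{A}_m)$ simply because the potential is bounded.

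If you keep the envelopes $P_m$ of $[{\rm{Sym}}\, W_m]$ (computed in degrees $km$ with exponent $1/(km)$), you must show they converge to $P[W, h^L]$ off a pluripolar set, not merely that they are monotone. This follows from $P[W, h^L] \leq P_m \leq h^L_m$, using (\ref{eq_fs_monoton2}) and submultiplicativity. The same sandwich also shows that $P_m$ is superfluous.

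Three points in Steps 1--3 are wrong or unjustified and should be dropped rather than repaired.

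\emph{Step 1 is not established.} For the natural choice $A = \widehat{A}_m$, $D = E_m$ it fails, since the base divisor $E_{km}$ of $W_{km}$ can be strictly smaller than $k E_m$; the divisor $D_{k, m}$ in (\ref{eq_hakm_ham000}) is nonzero in general. What descends to $Y_m$ exactly is $[{\rm{Sym}}\, W_m]$, via (\ref{eq_sym_yk_rel}), not $W$.

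\emph{The induced metric is not continuous.} The metric on $Y$ defined by suprema over fibers is only upper semicontinuous; it can jump up where the fiber jumps. Handling this is what Lemmas~\ref{lem_bnd_below_mettr} and \ref{lem_sup_y_hat} are for in Section~\ref{sect_alt_q_sch}, and working with $h^L_m$ avoids it entirely.

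\emph{Do not Stein-factorize.} The constant $\varphi_*[\omega]^{n-\kappa(W)}$ is the integral over the whole, possibly disconnected, general fiber of $\varphi_m$. Suppose the Stein factor $Y' \to Y_m$ has degree $d > 1$. Then the pulled-back series on $Y'$ is not birational, and the non-pluripolar mass of its envelope is $d$ times its volume. The connected-fiber integral is $\varphi_*[\omega]^{n-\kappa(W)}/d$. Invoking the birational case on $Y'$ as in Step 3 therefore gives an answer off by $d$; take $W = \pi^* R(Y, L_Y)$ with $\pi$ a finite cover of degree $d$. Proposition~\ref{prop_proj_fla} does not require connected fibers.
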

	\begin{rem}
		a)
		When $W = R(X, L)$ and $L$ is big, by \cite{SiciakExtremal} and \cite[Theorem 2.16]{FinGQBig}, we have
		\begin{equation}\label{eq_env_ident}
			P[W, h^L] = P[h^L],
		\end{equation}
		where $P[h^L]$ is the psh envelope defined by
		\begin{equation}\label{eq_sic_env}
			P[h^L]
			=
			\inf \Big\{ h^L_0 : h^L_0 \text{ has a psh potential and } h^L_0 \geq h^L
			\Big\}.
		\end{equation}
		Note that immediately from the definition, $P[h^L]$ has minimal singularities in the sense described in Section \ref{sect_min_sing}.
		Theorem \ref{thm_vol_form}, Lemma \ref{lem_env_sing_vers}, (\ref{eq_env_ident}) and the monotonicity formula, cf. Theorem \ref{thm_wn_monot}, then recover a theorem of Boucksom \cite{BouckVol} which states that for a big line bundle $L$ and any positive closed $(1, 1)$-current $T_{\min}$ with a potential of minimal singularities in the class $[c_1(L)]$, we have
		\begin{equation}\label{eq_vol_bouck}
			{\rm{vol}}_{\kappa}(L) = \int_X T_{\min}^n.
		\end{equation}
		\par 
		b) When $W$ is birational, (\ref{eq_vol_form}) is due to Hisamoto \cite{HisamVolume}.
	\end{rem}
	\par 
	One may ask whether an analogue of (\ref{eq_vol_bouck}) remains valid for line bundles that are not big.	
	To discuss this in details, we introduce the non-pluripolar numerical Kodaira-Iitaka dimension, $\kappa_{{\rm{np}}}(L)$, and the non-pluripolar numerical multiplicity, ${\rm{vol}}_{\kappa, {\rm{np}}}(L)$, as follows
	\begin{equation}\label{eq_numerical_defn}
	\begin{aligned}
		&
		\kappa_{{\rm{np}}}(L) = \max\Big\{
			i \in \{ 0, 1, \ldots, n \} :  T_{\min}^{i} \wedge \omega^{n - i} \neq 0
		\Big\}.
		\\
		&
		{\rm{vol}}_{\kappa, {\rm{np}}}(L)
		=
		\frac{1}{\varphi_{*} [\omega]^{n - \kappa(L)}} \int_X T_{\min}^{\kappa(L)} \wedge \omega^{n - \kappa(L)},
	\end{aligned}
	\end{equation}
	where $T_{\min}$ is a positive closed $(1, 1)$-current with a potential of minimal singularities in the class $[c_1(L)]$ and $\varphi_{*} [\omega]^{n - \kappa(L)}$ is defined for the complete linear series as before.
	Note that both of these notions are independent of the choice of $T_{\min}$ by the monotonicity formula, cf. Theorem \ref{thm_wn_monot}, which along with Theorems \ref{thm_ki_form} and \ref{thm_vol_form} immediately implies the following result.
	\begin{cor}
		For any effective holomorphic line bundle $L$, the following holds
		\begin{equation}\label{eq_form_upp_bnd}
			\kappa(L) \leq \kappa_{{\rm{np}}}(L), \qquad {\rm{vol}}_{\kappa}(L) \leq {\rm{vol}}_{\kappa, {\rm{np}}}(L).
		\end{equation}
	\end{cor}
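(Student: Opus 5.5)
The plan is to apply Theorems \ref{thm_ki_form} and \ref{thm_vol_form} to the complete linear series $W = R(X, L)$, and then compare the current $c_1(L, P[W, h^L])$ with $T_{\min}$ using the monotonicity formula, Theorem \ref{thm_wn_monot}. For this $W$ we have $\kappa(W) = \kappa(L)$ and ${\rm{vol}}_{\kappa}(W) = {\rm{vol}}_{\kappa}(L)$ by definition. The fiber integral $\varphi_*[\omega]^{n-\kappa(L)}$ appearing in (\ref{eq_vol_form}) is exactly the one appearing in (\ref{eq_numerical_defn}), so the normalizing constants match.

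The first step is to fix a continuous metric $h^L$ and observe that $P[W, h^L]$ has a psh potential, as noted after (\ref{eq_env_w}). Hence $c_1(L, P[W, h^L])$ is a positive closed $(1,1)$-current in the class $[c_1(L)]$. Since $T_{\min}$ has minimal singularities in this class, its potential is less singular than that of $c_1(L, P[W, h^L])$. The monotonicity formula (Theorem \ref{thm_wn_monot}) for non-pluripolar products of currents in the same cohomology class, wedged with the fixed smooth closed form $\omega^{n-i}$, then gives for every $i$
\begin{equation*}
\int_X c_1(L, P[W, h^L])^{i} \wedge \omega^{n-i} \leq \int_X T_{\min}^{i} \wedge \omega^{n-i}.
\end{equation*}
This uses that the total masses of these non-pluripolar measures are monotone in singularity type, even with mixed terms involving a Kähler form. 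If Theorem \ref{thm_wn_monot} is stated only for pure top-degree products, I would apply it to the currents $c_1(L,\cdot) + \omega$, expand binomially, and compare coefficients of $\omega^{n-i}$ as polynomials in a scaling parameter $t\omega$.

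Next, Theorem \ref{thm_ki_form} gives $c_1(L, P[W,h^L])^{\kappa(L)} \wedge \omega^{n-\kappa(L)} \neq 0$. Since this is a positive measure, its mass is positive, so by the inequality the mass of $T_{\min}^{\kappa(L)} \wedge \omega^{n-\kappa(L)}$ is positive as well. This yields $\kappa(L) \leq \kappa_{\rm np}(L)$. Taking $i = \kappa(L)$ in the inequality and dividing by $\varphi_*[\omega]^{n-\kappa(L)} > 0$, Theorem \ref{thm_vol_form} yields ${\rm{vol}}_{\kappa}(L) \leq {\rm{vol}}_{\kappa, {\rm{np}}}(L)$.

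The main obstacle is making sure the monotonicity formula applies in the mixed form with $\omega^{n-i}$ and with currents that are not necessarily big. I would handle this by the scaling and binomial argument above, using that the non-pluripolar product is multilinear.
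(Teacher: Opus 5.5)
Your argument is correct and is exactly the paper's: apply Theorems \ref{thm_ki_form} and \ref{thm_vol_form} to $W = R(X,L)$, then compare $c_1(L, P[W,h^L])$ with $T_{\min}$ via the monotonicity formula, Theorem \ref{thm_wn_monot}. Theorem \ref{thm_wn_monot} is already stated for mixed products of $n$ possibly different currents (take $T_j = T'_j = \omega$ for $j > i$), so your binomial fallback is unnecessary. This is fortunate, because an inequality between two polynomials in $t$ for all $t>0$ does not give a coefficientwise comparison, and in particular it would not give the volume inequality when $\kappa_{\rm np}(L) > \kappa(L)$.
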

	\begin{rem}
		When $L$ is big, both inequalities in (\ref{eq_form_upp_bnd}) become equalities by (\ref{eq_vol_bouck}); however, in general, the reverse inequalities in (\ref{eq_form_upp_bnd}) do not hold.
		Indeed, if the reverse inequality for the Kodaira-Iitaka dimension were to hold, then -- since the right-hand side of (\ref{eq_form_upp_bnd}) depends solely on the numerical data, namely the class $c_1(L)$ -- it would follow that the Kodaira-Iitaka dimension is a numerical invariant of $L$. 
		This, however, is false, see Lehmann \cite[Example 6.1]{LehmannIitaka}.
		Consequently, (\ref{eq_env_ident}) also fails in general, i.e. when the line bundle $L$ is not big.
	\end{rem}
	\par
	\begin{sloppypar} 
	The study of the various numerical incarnations of the Kodaira-Iitaka dimension has recently drawn significant interest.
	Several equivalent and non-equivalent definitions have been proposed in the literature; see, for instance, \cite{NakayamaZariski, BDPP, LehmannIitaka, LesieutreIitaka, ChoiParknumII, FLTVolume}.
	Among the existing notions, it appears that the one defined via the movable intersection product in \cite{BDPP}, which we denote by $\kappa_{\mathrm{mov}}(L)$, is currently the smallest one; see \cite[Theorem 1.1 and after]{ChoiParknumII}.
	This invariant is defined as follows
	\begin{equation}
		\kappa_{{\rm{mov}}}(L) = \max\Big\{
			i \in \{ 0, 1, \ldots, n \} :  \langle c_1(L)^{i} \rangle \neq 0
		\Big\},
	\end{equation}
	where $\langle \cdot \rangle$ is the movable intersection product, see \cite[Definition 3.6]{BDPP} for details.
	Note that the non-pluripolar product is known to be dominated by the movable intersection product, see \cite[Proposition 1.20]{BEGZ}, and immediately from this, we get the following inequality
	\begin{equation}\label{eq_np_bdpp_compar}
		\kappa_{{\rm{np}}}(L) \leq  \kappa_{{\rm{mov}}}(L).
	\end{equation}
	In general, the inverse inequality in (\ref{eq_np_bdpp_compar}) fails. 
	Indeed, in \cite[Example 1.7]{DPSNef} the authors constructed a nef line bundle $L$ over a ruled surface for which $c_1(L)$ is non-trivial (and hence $\kappa_{{\rm{mov}}}(L) \geq 1$, cf. \cite[p. 219]{BEGZ}), and for which there is a unique closed positive $(1, 1)$-current $T$ in the class $c_1(L)$. 
	The current is given by the integration along a certain curve, and so $\kappa_{{\rm{np}}}(L) = 0$.
	\par 
	Providing an algebraic interpretation of $\kappa_{\mathrm{np}}(L)$, in the spirit of those for $\kappa_{\mathrm{mov}}(L)$ given in \cite{LehmannIitaka}, is an interesting question that we do not address in this article.
	\end{sloppypar} 
	\par 	
	Our next result concerns the refinement of (\ref{eq_vol_form}) on the metric level, computing the asymptotic ratio of the volumes of unit balls defined by the sup-norms on the linear series.
	\par
	For further purposes, it becomes necessary to also consider the sup-norms calculated over subsets.
	We hence fix a compact subset $K$ which we assume to be \textit{non-pluripolar}, i.e. not contained in the $\{-\infty\}$-locus of some plurisubharmonic (\textit{psh}) function.
	For a continuous metric $h^L$ on $L$, we define the (singular) metric $P[W, K, h^L]$ analogously to (\ref{eq_env_w}) except that instead of the sup-norms ${\textrm{Ban}}_k^{\infty}[W](h^L)$, we consider the sup-norms over $K$, which we denote by ${\textrm{Ban}}_k^{\infty}[W](K, h^L)$.
	As $K$ is non-pluripolar, $P[W, K, h^L]$ has a psh potential, cf. \cite[Theorem 9.17]{GuedjZeriahBook}.
	\par 
	For continuous metrics $h^L_0$, $h^L_1$ on $L$, we define the relative Monge-Ampère $\kappa$-energy as
	\begin{multline}\label{eq_diff_energy}
		\mathscr{E}_{\kappa}(P[W, K, h^L_0]) - \mathscr{E}_{\kappa}(P[W, K, h^L_1])
		\\
		=
		\frac{1}{2(\kappa(W) + 1) \cdot \varphi_{*} [\omega]^{n - \kappa(W)}}
		\sum_{i = 0}^{\kappa(W)} \int \log\Big( \frac{P[W, K, h^L_1]}{P[W, K, h^L_0]} \Big)
		\cdot
		\\
		 \cdot c_1(L, P[W, K, h^L_0])^i \wedge c_1(L, P[W, K, h^L_1])^{\kappa(W) - i} \wedge \omega^{n - \kappa(W)}.
	\end{multline}
	In Remark \ref{rem_ma_kappa_ma_usual}, we prove that (\ref{eq_diff_energy}) is independent of the choice of $\omega$, and that it satisfies the cocycle property; therefore, it is natural to regard it as a difference.
	\par 
	For every $k \in \nat$, we fix a Hermitian norm $H_k$ on $W_k$, which allows us to calculate the volumes $\vol(\cdot)$ of measurable subsets in $W_k$. 
	Note that while such volumes depend on the choice of $H_k$, their ratio does not.
	For $i = 0, 1$, we denote by $\mathbb{B}_k[W, K, h^L_i]$ the unit balls in $W_k$ corresponding to the norms ${\textrm{Ban}}_k^{\infty}[W](K, h^L)$.
	Our next result expresses the ratio of the volumes of $\mathbb{B}_k[W, K, h^L_i]$ in terms of their relative Monge-Ampère $\kappa$-energy as follows.
	\begin{thm}\label{thm_volumes_ratio}
		For any continuous metrics $h^L_0, h^L_1$ on $L$, we have
		\begin{equation}\label{rem_thm_isom2}
			\lim_{k \to \infty}
			\frac{1}{k^{\kappa(W) + 1}} \cdot \log \Big( \frac{\vol(\mathbb{B}_k[W, K, h^L_0])}{\vol(\mathbb{B}_k[W, K, h^L_1])} \Big)
			=
			\mathscr{E}_{\kappa}(P[W, K, h^L_0]) - \mathscr{E}_{\kappa}(P[W, K, h^L_1]).
		\end{equation}
	\end{thm}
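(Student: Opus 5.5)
The plan is to follow the strategy used in the big case (Berman–Boucksom, Berman–Boucksom–Witt Nyström), adapted to the $\kappa(W)$-dimensional setting through the Kodaira-Iitaka maps (\ref{eq_ki_maps}). We interpolate between $h^L_0$ and $h^L_1$ along the affine path $h^L_t = (h^L_0)^{1-t}(h^L_1)^{t}$, $t \in [0,1]$, and write
\begin{equation*}
\log \frac{\vol(\mathbb{B}_k[W, K, h^L_0])}{\vol(\mathbb{B}_k[W, K, h^L_1])}
=
\int_0^1 \frac{d}{dt} \log \vol(\mathbb{B}_k[W, K, h^L_t]) \, dt .
\end{equation*}
Sup-norms are not differentiable in $t$, so we first replace $\mathrm{Ban}^\infty_k[W](K, h^L_t)$ by $L^2$-norms $\mathrm{Hilb}_k(\mu, h^L_t)$, where $\mu$ is a Bernstein–Markov measure for $K$ (for instance the equilibrium measure of $P[W,K,h^L]$, or any measure with the Bernstein–Markov property relative to $W$). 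The Bernstein–Markov property gives
\begin{equation*}
\mathrm{Ban}^\infty_k \le \mathrm{Hilb}_k \le e^{\epsilon k}\, \mathrm{Ban}^\infty_k .
\end{equation*}
Since $\dim W_k = O(k^{\kappa(W)})$, the logarithms of the volume ratios then differ by $o(k^{\kappa(W)+1})$, and this sub-exponential comparison is exactly the margin needed.

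For Hermitian norms the volume of the unit ball is $\det$ of the Gram matrix to the power $-1$, up to a constant depending only on $\dim W_k$, which cancels in the ratio. Differentiating gives
\begin{equation*}
\frac{d}{dt}\log\vol(\mathbb{B}_k[h^L_t])
=
k\int_K \log\frac{h^L_1}{h^L_0}\, B_{k,t}\, d\mu ,
\end{equation*}
where $B_{k,t}$ is the partial Bergman density of $(W_k, \mathrm{Hilb}_k(\mu, h^L_t))$. The sign convention is fixed to match (\ref{eq_diff_energy}).

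The key input is then the pointwise refinement announced in the abstract: the weak convergence
\begin{equation*}
\frac{\kappa(W)!}{k^{\kappa(W)}} B_{k,t}\,\mu
\to
\frac{c_1(L, P[W,K,h^L_t])^{\kappa(W)}\wedge\omega^{n-\kappa(W)}}{\varphi_*[\omega]^{n-\kappa(W)}} .
\end{equation*}
I would take this as the paper's Bergman-kernel theorem, proved before this one. If it is not available, I would prove it via the differentiability of the energy functional and the usual Berman–Boucksom argument: the upper bound comes from the extremal property of envelopes and Theorem \ref{thm_vol_form}, the lower bound from concavity. Integrating against the continuous function $\log(h^L_1/h^L_0)$, with dominated convergence in $t$ since $B_{k,t}$ integrates to $\dim W_k$, gives
\begin{equation*}
\lim_{k\to\infty}\frac{1}{k^{\kappa(W)+1}}\log\frac{\vol_0}{\vol_1}
=
\frac{1}{\kappa(W)!\,\varphi_*[\omega]^{n-\kappa(W)}}\int_0^1\!\!\int \log\frac{h^L_1}{h^L_0}\; c_1(L,P_t)^{\kappa(W)}\wedge\omega^{n-\kappa(W)}\,dt ,
\end{equation*}
where $P_t = P[W, K, h^L_t]$. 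Here we use that the measure $c_1(L,P_t)^{\kappa(W)}\wedge\omega^{n-\kappa(W)}$ is supported where $P_t = h^L_t$ on $K$ (orthogonality or the domination principle), so $\log(h^L_1/h^L_0)$ can be replaced by $\frac{d}{dt}\log P_t$.

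The final step identifies this $t$-integral with the symmetric expression (\ref{eq_diff_energy}). This is the standard derivative formula for the Monge–Ampère energy,
\begin{equation*}
\frac{d}{dt}\mathscr{E}_\kappa(P_t) = c\int \dot{P}_t\, c_1(L,P_t)^{\kappa(W)}\wedge\omega^{n-\kappa(W)},
\end{equation*}
proved by integration by parts in the non-pluripolar setting. Both $P_t$ have the same singularity type along the series, and $\omega$ is closed, so the extra factor $\omega^{n-\kappa(W)}$ does not spoil the integration by parts; this is the content of Remark \ref{rem_ma_kappa_ma_usual}. The normalising constants ($\kappa!$ versus $2(\kappa+1)$) must be checked against the paper's conventions for $c_1$.

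The main obstacle is the Bergman kernel asymptotics in the non-big regime. There the measures live on a $\kappa$-dimensional base, and one must pass through the Iitaka fibration and fiber integrals $\varphi_*[\omega]^{n-\kappa}$. I would attack this by pushing forward to $Y_m$, a resolution of the base on which $W$ becomes birational, and applying Hisamoto's birational case there.
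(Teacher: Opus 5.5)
Your argument breaks at its central step. You claim that $\frac{\kappa(W)!}{k^{\kappa(W)}}B_{k,t}\,\mu$ converges weakly on $X$ to $c_1(L,P_t)^{\kappa(W)}\wedge\omega^{n-\kappa(W)}/\varphi_*[\omega]^{n-\kappa(W)}$. This is false unless $W$ is birational. When $\kappa(W)<n$ the proposed limit depends on $\omega$ along the fibres of $\varphi_m$, while the Bergman measures do not. Moreover, Section~\ref{sect_no_weak} gives a Bernstein--Markov measure for which the partial Bergman measures diverge.

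What is true (Theorem~\ref{thm_part_bk_get}) is convergence only after pushing forward by $\rho$ to $Z$, i.e.\ against test functions $\rho^*f$. The $t$-derivative of $\log\vol$ along your path is instead tested against $\log(h^L_1/h^L_0)$, an arbitrary continuous function on $X$.

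Your orthogonality step fails for the same reason. The measure $c_1(L,P_t)^{\kappa(W)}\wedge\omega^{n-\kappa(W)}$ charges whole fibres over the contact set in the base, not just $\{P_t=h^L_t\}\cap K$. So $\log(h^L_1/h^L_0)$ cannot be replaced by $\frac{d}{dt}\log P_t$, and $\mathscr{E}_\kappa$ is genuinely non-differentiable in such directions.

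Concretely, take $X=\mathbb{P}^1\times\mathbb{P}^1$, $L=\pi^*\mathscr{O}(1)$, $W=R(X,L)$, $K=X$, and $\omega$ a product of Fubini--Study forms. Let $h^L_0=\pi^*h$ with $h$ positive, and $h^L_1=h^L_0\,e^{-2g}$ with $g(x,t)=f(x)\frac{1-|t|^2}{1+|t|^2}$ and $f>0$. Then $P[W,h^L_t]=\pi^*P[h\,e^{2tf}]$, and the fibre averages of $g$ vanish. So your limit formula (before the replacement) gives $0$. By (\ref{eq_diff_energy_rel_ex}), the true limit is $\mathscr{E}(P[h])-\mathscr{E}(P[h\,e^{2f}])\neq 0$.

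There is also a circularity. In the paper, the pushed-forward convergence of Theorem~\ref{thm_part_bk_get} is deduced from the present theorem, through Theorem~\ref{thm_differentiab} and Lemma~\ref{lem_yuan}. The hypothesis $\liminf_k f_k\ge g$ there is exactly the volume asymptotics. So your fallback of proving Bergman asymptotics via differentiability of the energy and concavity presupposes what you want to prove.

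The missing idea is to bypass Bergman kernels and compare volumes directly:
\begin{enumerate}
\item By Theorem~\ref{thm_sym_subalgebra} and Lemma~\ref{lem_comp_rest}, the log-ratio on $W_{km}$ agrees up to $\epsilon k^{\kappa(W)+1}$ with the one on $[\sym^k W_m]\simeq H^0(Y_m,A_m^{\otimes k})$. The error term is controlled because $h^L_0,h^L_1$ are mutually bounded.
\item By (\ref{eq_kod_pullback_sup}), the restricted sup-norms are sup-norms for the fibrewise-sup semimetrics $\widehat\varphi_{m,*}(\widehat K,\widehat h^L_i)$ of (\ref{eq_max_metric}). These are only upper semicontinuous, so one needs the semicontinuous version of Berman--Boucksom on the $\kappa(W)$-dimensional $\widehat Y_m$ (Lemma~\ref{lem_bb_sing}).
\item Lemma~\ref{lem_ma_kappa_ma_usual} then converts the energy on $\widehat Y_m$ into $\mathscr{E}_\kappa$ as $m\to\infty$. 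It rests on the projection formula and on the approximation $P[W_m,K,h^L]\to P[W,K,h^L]$ of Theorem~\ref{prop_two_quant_sch}.
\item Degrees $km+r$ are handled by multiplying by a fixed $s\in W_r$ and applying Lemma~\ref{lem_twist_rho}.
\end{enumerate}

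Your closing suggestion to pass to $Y_m$ points in this direction, but as stated it is not enough. $W$ itself does not descend to $Y_m$; only $[\sym W_m]$ does, up to the Fujita error. Also, an $L^2$-norm on $X$ descends to a norm with a $k$-dependent fibre weight, not an $L^2$-norm for a fixed metric. So Hisamoto's birational result cannot simply be quoted there.
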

	\begin{rem}
		a) Berman-Boucksom in \cite{BermanBouckBalls} established a version Theorem \ref{thm_volumes_ratio} for $W = R(X, L)$ and $L$ big.
		In \cite{BermanBouckBalls}, the relative Monge-Ampère $\kappa$-energy is replaced by the relative Monge-Ampère energy, see (\ref{eq_diff_energy_a}), and the envelope $P[W, K, h^L]$ is replaced with $P[K, h^L]$, which is defined as in (\ref{eq_sic_env}), but with the inequality $h^L_0 \geq h^L$ required to hold only over $K$. 
		Our results are compatible, as similarly to (\ref{eq_env_ident}), for big $L$ and $W = R(X, L)$, we have 
		\begin{equation}\label{eq_env_ident2}
    		P[W, K, h^L] = P[K, h^L].
		\end{equation}
		Note that our proof of Theorem \ref{thm_volumes_ratio} relies on \cite{BermanBouckBalls}.
		\par 
		b) The reader will check that if $h^L_0 = 2 h^L_1$, then Theorem \ref{thm_volumes_ratio} yields exactly (\ref{eq_vol_form}), which explains why we call Theorem \ref{thm_volumes_ratio} a metric refinement of (\ref{eq_vol_form}).
	\end{rem}
	\par
	We now turn our attention to a pointwise refinement of Theorem \ref{thm_vol_form}, which we formulate in the language of the partial Bergman kernels.
	We fix a continuous metric $h^L$ on $L$ and a positive Borel measure $\mu$ with a support equal to a compact non-pluripolar subset $K \subset X$.
	We denote by ${\textrm{Hilb}}_k[W](h^L, \mu)$ the positive semi-definite form on $W_k$ defined for $s_1, s_2 \in W_k$ as follows
	\begin{equation}\label{eq_defn_l2}
			\langle s_1, s_2 \rangle_{{\textrm{Hilb}}_k[W](h^L, \mu)} = \int_X \langle s_1(x), s_2(x) \rangle_{(h^L)^{k}} \cdot d \mu(x).
	\end{equation}
	Remark that since $K$ is non-pluripolar, the above form is positive definite.
	We say that a measure $\mu$ is \textit{Bernstein-Markov} with respect to $W$ and $(K, h^L)$, if for each $\epsilon > 0$, there is $k_0 \in \nat$, so that
	\begin{equation}\label{eq_bm_clas}
		{\textrm{Ban}}_k^{\infty}[W](K, h^L)
		\leq
		C
		\cdot
		\exp(\epsilon k)
		\cdot
		{\textrm{Hilb}}_k[W](h^L, \mu), \quad \text{for any } k \geq k_0.
	\end{equation}
	\par 
	For surveys on the Bernstein-Markov property, see \cite{BernsteinMarkovSurvey, MarinescuVuSurvey}.  
	Note that the Lebesgue measure on a smoothly bounded domain in $X$, or on a totally real compact submanifold of real dimension $n$, is Bernstein-Markov, see \cite[Corollary 1.8]{BerBoucNys}, \cite[Proposition 3.6]{BernsteinMarkovSurvey} and \cite[Theorem 1.3]{MarinescuVu}.
	\par
	We define a sequence of measures on $X$ as follows
	\begin{equation}\label{eq_mu_begmm}
		\mu_k[W, \mu, h^L] := \frac{1}{k^{\kappa(W)}} B_k[W, \mu, h^L](x, x) \cdot d \mu(x),
	\end{equation}
	where $B_k[W, \mu, h^L](x, x) \geq 0$ is the partial Bergman (or Christoffel-Darboux) kernel, defined as
	\begin{equation}\label{eq_bergm_kern}
		B_k[W, \mu, h^L](x, y) := \sum_{i = 1}^{m_k} s_i(x) \cdot s_i(y)^* \in L_x^{\otimes k} \otimes (L_y^{\otimes k})^*,
	\end{equation}
	where $s_i$, $i = 1, \ldots, m_k$ is an orthonormal basis of $(W_k, {\textrm{Hilb}}_k[W](h^L, \mu))$ and $m_k := \dim W_k$. 
	One can easily verify that (\ref{eq_bergm_kern}) doesn't depend on the choice of the basis.
	\par 
	It was established by Berman-Boucksom-Witt Nyström \cite{BerBoucNys} that when $W = R(X, L)$ and $L$ is a big line bundle, $\mu_k[W, \mu, h^L]$ converge weakly towards the equilibrium measure of $(K, h^L)$; see also Bloom-Levenberg \cite{BloomLeven1}, \cite{BloomLeven2} for the case of projective spaces.
	\par 
	When $X$ admits an action of a compact connected Lie group $G$ which can be lifted to a holomorphic action on $L$, we can consider the graded linear series $W_k := H^0(X,L^{\otimes k})^G$ of $G$-invariant sections. 
	When $L$ is ample, $h^L$ has a positive curvature, the $G$-action preserves $h^L$, and the measure $\mu$ is the symplectic volume form associated with $c_1(L, h^L)$, Ma-Zhang in \cite{MaZhBKSR} established that $\mu_k[W, \mu, h^L]$ converge weakly, as $k \to \infty$, to an explicit measure supported on the zero set of the associated moment map, provided that the latter has $0 \in \mathfrak{g}^*$ as a regular value.
	\par 
	From these two examples, together with many others reviewed in Section \ref{sect_exampl}, it is natural to expect that the weak convergence holds in general.
	However, this expectation is false: in Section \ref{sect_no_weak}, we construct an explicit (and easy) counterexample.
	The absence of the weak convergence is closely related to the fact that the relative Monge-Ampère $\kappa$-energy is, in general, \textit{not differentiable}, as we will explain in Section \ref{sect_no_weak}. 
	This contrasts with the usual relative Monge-Ampère energy in the context of a big line bundle and a complete linear series, where the differentiability was established by Berman-Boucksom \cite{BermanBouckBalls}.
	\par 
	Nevertheless, as shown below, the weak convergence does hold for birational $W$.
	Moreover, even without any assumptions, a weaker form of the weak convergence still holds.
	To state our result, we fix a complex analytic space $Z$ and a holomorphic map $\rho : X \to Z$, which we assume to factorize, for $m \in \nat$ divisible enough, through the rational maps of the linear series as
	\begin{equation}\label{eq_rat_maps_z}
		\begin{tikzcd}
		& X \arrow[dl, "\rho"'] \arrow[dr, dashed, "\varphi_m"] & \\
Z & & \arrow[ll, dashed, "\psi_m"'] Y_m.
		\end{tikzcd}
	\end{equation}
	Of course, by the discussion after (\ref{eq_ki_maps}), it suffices to verify the existence of such factorization for a single $m$ large enough.
	It also follows from (\ref{eq_ym_ykm}) that one can consider $Z = Y_m$, modulo replacing $X$ by its birational model.
	In Remark \ref{rem_indep_mueq}, we show that the following measure on $Z$ does not depend on the choice of $\omega$:
	\begin{equation}\label{eq_quil_pushed_y}
		\mu_{\mathrm{eq}}[W, Z, K, h^L] :=
		\frac{1}{\varphi_{*} [\omega]^{n - \kappa(W)}} \rho_{*}  
		\Big(
		c_1(L, P[W, K, h^L])^{\kappa(W)} \wedge \omega^{n - \kappa(W)}
		\Big).
	\end{equation}
	We can now state our final result.
	\begin{thm}\label{thm_part_bk_get}
		If the measure $\mu$ is Bernstein-Markov with respect to $W$ and $(K, h^L)$, then, as $k \to \infty$, the sequence of measures $\rho_{*}( \mu_k[W, \mu, h^L])$ on $Z$ converges weakly towards $\mu_{\mathrm{eq}}[W, Z, K, h^L]$.
	\end{thm}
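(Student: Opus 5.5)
The plan is the standard Berman--Boucksom--Witt Nyström variational argument: differentiate the volume-ratio formula of Theorem \ref{thm_volumes_ratio}, but only along perturbations pulled back from $Z$, which is where the $\kappa$-energy should be differentiable.

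\textbf{Step 1: the Bergman kernel as a derivative.} Fix a continuous function $f$ on $Z$ and put $u = f \circ \rho$. Consider the twisted metrics $h^L_t := h^L e^{-2tu}$ for $t \in \real$. Set
\begin{equation*}
F_k(t) := \frac{1}{k^{\kappa(W)+1}} \log \det {\textrm{Hilb}}_k[W](h^L_t, \mu).
\end{equation*}
This function is convex in $t$, by Hölder's inequality applied to the log-determinant of the Gram matrix. Its derivative at $t$ is, up to a normalising constant, $-\int_Z f \, d\rho_* \mu_k[W, \mu, h^L_t]$. This is the identity $\frac{d}{dt}\log\det = \mathrm{tr}$ together with the definition (\ref{eq_bergm_kern}).

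\textbf{Step 2: passing from $L^2$ to sup-norms.} By the Bernstein--Markov property (\ref{eq_bm_clas}), and the trivial reverse bound ${\textrm{Hilb}}_k \leq \mu(X)^{1/2}{\textrm{Ban}}_k^{\infty}$, the $L^2$ unit balls and the sup-norm unit balls $\mathbb{B}_k[W, K, h^L_t]$ have volumes agreeing up to $e^{o(k^{\kappa(W)+1})}$. This uses $\dim W_k = O(k^{\kappa(W)})$. Note that $\mu$ is Bernstein--Markov for $h^L_t$ as well, since $u$ is continuous; one uses uniform continuity and absorbs $e^{\epsilon k}$.

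Applying Theorem \ref{thm_volumes_ratio}, $F_k(t) - F_k(0)$ converges to $E(t) := -c\,(\mathscr{E}_{\kappa}(P[W,K,h^L_t]) - \mathscr{E}_{\kappa}(P[W,K,h^L]))$, for an explicit constant $c$. Convexity of $F_k$ then gives the standard lemma: if $E$ is differentiable at $t = 0$, then $F_k'(0) \to E'(0)$. The theorem therefore reduces to showing
\begin{equation*}
\frac{d}{dt}\Big|_{t=0} \mathscr{E}_{\kappa}(P[W,K,h^L_t]) = \int_Z f \, d\mu_{\mathrm{eq}}[W,Z,K,h^L]
\end{equation*}
with the correct sign and normalisation. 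Weak convergence then follows because continuous functions on $Z$ are test functions and the total masses are bounded.

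\textbf{Step 3: differentiability of the $\kappa$-energy, the main obstacle.} Section \ref{sect_no_weak} shows that this derivative fails for general $u$, so the factorization through $\rho$ must be used essentially. Because $u$ is constant on the fibres of $\varphi_m$, and hence of $\rho$, the twisted sup-norms are $|s|e^{ktu}$. Consequently $FS$ of the twisted norms is comparable with the twist of $FS$ computed on the base, roughly $P[W,K,h^L_t] \approx$ the envelope of $h^L e^{-2tu}$ taken over functions pulled back from $Y_m$.

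I would try to make this precise by reducing to the birational case on $Y_m$, or on a resolution of it. Concretely:
\begin{enumerate}[a)]
\item Write the sections of $W_{km}$ as pullbacks of sections of $\mathcal{O}(k)$ restricted to $Y_m$, modulo the graded structure; use (\ref{eq_ym_ykm}) to make this compatible in $k$.
\item Use that the current $c_1(L, P[W,K,h^L])$ is, up to a fixed divisorial part, a pullback from $Y_m$. Then $c_1^{\kappa}\wedge\omega^{n-\kappa}$ pushes forward to the base Monge--Ampère measure times the fibre integral $\varphi_*[\omega]^{n-\kappa(W)}$.
\item This turns $\mathscr{E}_{\kappa}$ along the path $h^L_t$ into a Monge--Ampère energy of a big linear series on the $\kappa(W)$-dimensional base. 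There the Berman--Boucksom differentiability theorem applies: the derivative of the energy of an envelope is the integral against the Monge--Ampère measure of the envelope (orthogonality relation).
\end{enumerate}

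Pushing that derivative back to $Z$ via $\psi_m$ gives exactly $\mu_{\mathrm{eq}}[W,Z,K,h^L]$ from (\ref{eq_quil_pushed_y}). The delicate point I expect is justifying that the envelopes commute with the pullback along $\varphi_m$, in particular handling the indeterminacy and base loci and the non-pluripolar products across the birational modification. I would first resolve $\varphi_m$ to a morphism and prove the identity of envelopes there.
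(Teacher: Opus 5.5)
\textbf{Steps 1--2.} These follow the paper's proof: concavity of the log-volume of the $L^2$ unit balls, the derivative formula via $B_k[W,\mu,h^L]$, Theorem \ref{thm_volumes_ratio}, and Lemma \ref{lem_yuan}. Everything is thereby reduced to the differentiability statement of Theorem \ref{thm_differentiab}.

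One correction. Your claim that $\mu$ is Bernstein--Markov for $h^L_t$ ``by uniform continuity'' is not justified. The twist $e^{-2ktu}$ oscillates by a factor $e^{2k|t|\operatorname{osc}(u)}$, which is not $e^{o(k)}$. Moreover, the sup and the $L^2$-mass of a section can sit where $u$ takes different values. You do not need this claim. Normalize $\mu$ to be a probability measure on $K$; then for every $t$ the sup-norm unit ball for $h^L_t$ lies inside the $L^2$ unit ball. The Bernstein--Markov property at $t=0$ alone then gives the one-sided bound $\limsup_k (F_k(t)-F_k(0)) \leq E(t)$, which is all the convexity lemma needs.

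\textbf{Step 3: the genuine gap.} Items b)--c) presuppose a fixed $m$ for which $P[W,K,h^L_t]$ is, modulo a fixed divisor, the pullback of a full envelope of a big line bundle on $\widehat Y_m$, so that Berman--Boucksom applies. This holds when $W$ is finitely generated, but not in general.

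In general, $P[W,K,h^L]$ is only the limit of the metrics $P[W_m,K,h^L]$ of (\ref{eq_hlm_defn}). The divisors $E_{km}/(km)$ keep changing, and (\ref{eq_hakm_ham}) is only an inequality. Hence $\mathscr{E}_\kappa$ is only the $m\to\infty$ limit of $m^{-\kappa(W)-1}$ times the relative energies $\mathscr{E}_m(t)$ of $P[\widehat\varphi_{m,*}(\widehat K,\widehat h^L e^{-2t\rho^*f})]_*$ on $\widehat Y_m$ (Lemma \ref{lem_ma_kappa_ma_usual}).

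Two natural repairs do not work:
\begin{enumerate}[a)]
\item Treating the descended series as a birational, non-complete series on the base does not help. Berman--Boucksom covers complete series only, and in the paper the birational case is itself just the case $Z=X$ of Theorem \ref{thm_differentiab}.
\item Passing to the limit in $m$ directly does not help either. Differentiability at $0$ is not inherited by pointwise limits of concave functions, e.g.\ $-\sqrt{t^2+1/m}\to-|t|$.
\end{enumerate}

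The missing idea is the estimate (\ref{eq_approx_diff_kappa_relma}): for every $\epsilon>0$ there is $m$ such that the two sides differ by at most $\epsilon|t|$, uniformly in $t$. The paper obtains it as follows.
\begin{enumerate}[a)]
\item By Theorem \ref{thm_sym_subalgebra} and Lemma \ref{lem_comp_rest}: the sup-norms of $h^L$ and $h^L e^{-2t\rho^*f}$ on $W_{km}$ are within a factor $e^{C|t|km}$. Passing to $[{\rm{Sym}}^k W_m]$, of relative codimension at most $\epsilon$, therefore moves the log-volume ratio by $O(\epsilon|t|(km)^{\kappa(W)+1})$ plus $O((km)^{\kappa(W)}\log(km))$.
\item Theorem \ref{thm_volumes_ratio} and Lemma \ref{lem_bb_sing} then convert these volumes into energies.
\item At fixed $m$ one needs Theorem \ref{thm_diff_sing}, a singular version of Berman--Boucksom. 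This is necessary because $\widehat\varphi_{m,*}(\widehat K,\widehat h^L)$ is only upper semicontinuous and vanishes off $\widehat\varphi_m(\widehat K)$. It pins the difference quotients of the $\kappa$-energy to within $\epsilon$ of $m^{-\kappa(W)}\int\psi_m^*f\,c_1(\widehat A_m,P[\widehat\varphi_{m,*}(\widehat K,\widehat h^L)]_*)^{\kappa(W)}$.
\item This last integral tends to $\int_Z f\,d\mu_{\mathrm{eq}}[W,Z,K,h^L]$ by Theorems \ref{prop_two_quant_sch} and \ref{thm_cont_begz}.
\end{enumerate}

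By contrast, the point you flag as delicate is immediate; it is (\ref{eq_env_birrrr}). By the factorization (\ref{eq_rat_maps_z}), $\rho^*f$ is constant on the fibres of $\widehat\varphi_m$, so the twist commutes with the fibrewise supremum. Note that the implication runs from $\rho$ to $\varphi_m$, not conversely as you wrote.
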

	\begin{rem}
		a)
		Integrating $\rho_{*}(\mu_k[W, \mu, h^L])$ over $Z$, Theorem \ref{thm_part_bk_get} yields (\ref{eq_vol_form}), thereby justifying the interpretation of Theorem \ref{thm_part_bk_get} as a pointwise analogue of Theorem \ref{thm_vol_form}.
		\par 
		b) 
		If $W$ is birational, then one can take $Z := X$ in (\ref{eq_rat_maps_z}).
		Theorem \ref{thm_part_bk_get} then shows that $\mu_k[W, \mu, h^L]$ converge weakly, as $k \to \infty$, towards $c_1(L, P[W, K, h^L])^{n}$. 
		When $W = R(X, L)$ and $L$ is big, by (\ref{eq_env_ident}), this is compatible with the aforementioned result from \cite{BerBoucNys}, establishing that in this case the limiting measure is given by $c_1(L, P[K, h^L])^n$.
	\end{rem}
	\par 
	This article is organized as follows.
	In Section \ref{sect_exampl}, we revisit several natural examples of linear series, place the results of the present article in the context of the existing work and state some applications.
	In Section \ref{sect_2}, we establish Theorems \ref{thm_ki_form} and \ref{thm_vol_form}.
	In Section \ref{sect_vol_balls_compar}, we prove Theorem \ref{thm_volumes_ratio}.
	In Section \ref{sect_pbk}, we establish Theorem \ref{thm_part_bk_get}, and relate it with the differentiability of the relative Monge-Ampère $\kappa$-energy.
	Finally, in Section \ref{sect_lin_ser_sing_type} we discuss some applications.
	\par
	\textbf{Notation.}
	We denote by $d = \partial + \dbar$ the usual decomposition of the exterior derivative in terms of its $(1, 0)$ and $(0, 1)$ parts, and we set 
	\begin{equation}
		d^c := \frac{\partial - \dbar}{2 \pi \imun}.	
	\end{equation}
	The Poincaré-Lelong formula gives us the following: for any smooth metric $h^L$ on a line bundle $L$, any $s \in H^0(X, L)$, $s \neq 0$, for the divisor $E := [s = 0]$, we have
	\begin{equation}\label{eq_poinc_lll}
		c_1(L, h^L) + \ddc \log |s(x)|_{h^L} = [E],
	\end{equation}
	where $c_1(L, h^L)$ is the first Chern form of $(L, h^L)$, defined as $\frac{\imun}{2 \pi} R^L$, where $R^L$ is the curvature of the Chern connection and $[E]$ is the current of integration along $E$.
	Note also that for an arbitrary smooth function $\phi: X \to \real$, we have
	\begin{equation}\label{eq_c1_bc}
		c_1(L, h^L \cdot \exp(-2 \phi))
		=
		c_1(L, h^L)
		+
		\ddc \phi.
	\end{equation}
	\par 
	We say that $\phi$ is a potential of a metric $h^L$ on $L$ with respect to a smooth metric $h^L_0$ if 
	\begin{equation}\label{eq_hl_hl0_phi_com}
		h^L = h^L_0 \cdot \exp(-2 \phi).
	\end{equation}
	Occasionally, we will refer to a (local) potential of $h^L$, which is a potential with respect to a metric that trivializes a particular local holomorphic frame.
	\par
	We also consider semimetrics (resp. singular metrics), which are the objects $h^L$ defined by the expression (\ref{eq_hl_hl0_phi_com}), so that $\phi$ is allowed to take values in $\real \cup \{ +\infty \}$ (resp. $\real \cup \{ -\infty \}$).
	When $\phi$ lies in $L^1_{loc}$, through (\ref{eq_c1_bc}), we extend the definition of $c_1(L, h^L)$ as a $(1, 1)$-current through (\ref{eq_c1_bc}).
	\par 
	For a function $f: X \to [-\infty, +\infty]$, we denote by $f^*$ its upper semicontinuous regularization, defined for any $x \in X$ as 
	\begin{equation}
		f^*(x) := \lim_{\epsilon \to 0} \sup_{B(x, \epsilon)} f,
	\end{equation}
	where $B(x, \epsilon) \subset X$ is a ball of radius $\epsilon$ around $x$ (with respect to some fixed Riemannian metric). 
	The lower semicontinuous regularization of a metric $h^L$, which we denote by $h^L_*$, is obtained by replacing the potential of $h^L$ with its upper semicontinuous regularization.
	\par 
	By a canonical singular metric $h^E_{{\rm{sing}}}$ on a line bundle $\mathscr{O}(E)$ associated with a divisor $E$ we mean a singular metric, defined such that for any $x \in X \setminus E$, we have $|s_E(x)|_{h^E_{{\rm{sing}}}} = 1$, where $s_E$ is the \textit{canonical holomorphic section} of $\mathscr{O}(E)$, verifying ${\rm{div}}(s_E) = E$.
	\par 	
	A function $\phi: X \to [-\infty, +\infty[$ is called quasi-plurisubharmonic (qpsh) if it can be locally written as the sum of a plurisubharmonic function and a smooth function. 
	For a smooth closed $(1, 1)$-form $\alpha$, we say $\phi$ is $\alpha$-plurisubharmonic ($\alpha$-psh) if it is qpsh and $\alpha + \ddc \phi > 0$ in the sense of currents.
	We let $\psh(X, \alpha)$ denote the set of $\alpha$-psh functions that are not identically $-\infty$.
	\par 
	When the class $[\alpha] \in H^2(X, \real) \cap H^{1, 1}(X)$ contains a positive closed $(1, 1)$-current, we say $[\alpha]$ is psef.
	If it moreover contains a Kähler current, such a class is called \textit{big}. 
	\par 
	A norm $N_V = \| \cdot \|_V$ on a finite-dimensional vector space $V$ naturally induces the norm $\| \cdot \|_Q$ on any quotient $Q$, $\pi : V \to Q$ of $V$ through
	\begin{equation}\label{eq_defn_quot_norm}
		\| f \|_Q
		:=
		\inf \big \{
		 \| g \|_V
		 ;
		 \quad
		 g \in V, 
		 \pi(g) = f
		\},
		\qquad f \in Q.
	\end{equation}
	For every inclusion $\iota: E \to V$, we also define a norm $\iota^* N_V$ on $E$ in a natural way.
	\par
	\textbf{Acknowledgement.}
	 This work was supported by the CNRS, École Polytechnique, and in part by the ANR projects QCM (ANR-23-CE40-0021-01), AdAnAr (ANR-24-CE40-6184), STENTOR (ANR-24-CE40-5905-01) and CanQuantFilt (ANR-25-ERCS-0009).
	 
	\section{Linear series: a case study}\label{sect_exampl}
	The aim of this section is to revisit several natural examples of linear series, to place the results of the present article in the context of the existing work and to state some applications.
	\par 
	Remark first that if we have two linear series $W^1 = \oplus_{k = 0}^{\infty} W^1_k$, $W^2 = \oplus_{k = 0}^{\infty} W^2_k$, so that $W^1 \subset W^2$, then the respective rational maps relate as in the following diagram
	\begin{equation}\label{eq_lin_ser_emb}
		\begin{tikzcd}
		& X \arrow[dl, dashed] \arrow[dr, dashed] & \\
\mathbb{P}((W_k^1)^*) & & \arrow[ll, dashed] \mathbb{P}((W_k^2)^*).
		\end{tikzcd}
	\end{equation}
	In particular, if $W^1$ is birational, then $W^2$ is birational too.
	\par 
	We say that a linear series $W$ \textit{contains an ample series}, cf. \cite{LazMus}, if there exist an ample line bundle $A$ over $X$, an effective divisor $E$ on $X$ such that $L = A \otimes \mathscr{O}(E)$, and the inclusion $H^0(X, A^{\otimes k}) \hookrightarrow W_k$, which is compatible with the natural map $H^0(X, A^{\otimes k}) \hookrightarrow H^0(X, L^{\otimes k})$ associated with the embedding $R(X, A) \subset R(X, L)$.
	We then have the diagram analogous to (\ref{eq_lin_ser_emb}) for the rational maps, and it follows from the Kodaira embedding theorem that any linear series containing an ample series is automatically birational.
	\par 
	For the latter use, we introduce the \textit{base locus} of a holomorphic line bundle $L'$ on $X$ as
	\begin{equation}
		Bs(L') := \Big\{ x \in X : \text{there is } s \in H^0(X, L'), \text{ such that } s(x) \neq 0 \Big\}.
	\end{equation}
	One can analogously define the base locus of a linear series.
	Moreover, by locally trivializing the sections from the linear series, we can also define the corresponding ideal sheaf. 
	\par 
	The \textit{stable base locus} of a line bundle $L'$ is defined as
	\begin{equation}
		\mathbb{B}(L') := \cap_{k = 1}^{+\infty} Bs((L')^{\otimes k}).
	\end{equation}
	The notion of the stable base locus extends naturally to $\mathbb{Q}$-line bundles, a fact we use to define the \textit{augmented base locus} as
	\begin{equation}\label{eq_aug_bl}
		\mathbb{B}_+(L') := \mathbb{B}(L' - \epsilon A),
	\end{equation}
	where $A$ is an arbitrary ample line bundle and $\epsilon$ is a sufficiently small rational number.
	From Noether's finiteness result, the above definition doesn't depend on $\epsilon > 0$ as long as it is very small.
	From this, it is immediate to see that it doesn't depend on the choice of the ample line bundle $A$.
	\par 
	\textbf{Example 0: the complete linear series.}
	Let $X$ be a projective manifold and $L$ be a line bundle over $X$. 
	Define the complete linear series as $W = R(X, L)$.
	\par 
	By definition, both the Kodaira-Iitaka dimension and the multiplicity of $W$ agree with the corresponding invariants of $L$.
	Fujita's decomposition, \cite[Corollary 2.2.7]{LazarBookI}, essentially says that when $L$ is big, $R(X, L)$ contains an ample series; in particular, it is birational by the discussion after (\ref{eq_lin_ser_emb}).
	Inversely, by (\ref{eq_dim_ki_yk}), we see that if the complete linear series is birational, then $L$ is big.
	\par 
	Let us now fix $x \not\in \mathbb{B}_+(L)$.
	Then there is an ample $\mathbb{Q}$-line bundle $A$, a sufficiently large $r \in \nat^*$ and $s \in H^0(X, (L \otimes A^{-1})^{\otimes r})$, so that $s(x) \neq 0$.
	By (\ref{eq_lin_ser_emb}) and the Kodaira embedding theorem, we deduce that for $m$ large enough, the point $x$ lies in the locus where the associated rational map $X \dashrightarrow Y_m$ is well-defined and is an isomorphism, cf. \cite[Lemma 2.16]{LazMus}.
	\par 
	The study of the Bergman kernel (i.e., the partial Bergman kernel associated with the complete linear series) has been a central topic in complex analysis and geometry over the past several decades, see \cite{TianBerg, ZeldBerg, Bouche, Caltin, DaiLiuMa, MaHol, BermanEnvProj, ComanMaMarMoish} for some important developments.
	\par 
	\textbf{Example 1: finitely generated linear series.}
	Let $V \subset R(X, L)$ be a graded finite-dimensional subspace. 
	The graded subalgebra of $R(X, L)$ induced by $V$ yields a linear series.
	When $V$ is homogeneous of degree $1$, the respective rational maps are given by $X \dashrightarrow \mathbb{P}(V^*)$. The Hilbert-Samuel theorem and (\ref{eq_sym_yk_rel}) imply, cf. \cite[Theorem 3.1]{KavehKhov}, that the Kodaira-Iitaka dimension and the multiplicity of the induced series coincide, respectively, with the dimension and the degree of the image of the Kodaira-Iitaka map in the projective space $\mathbb{P}(V^*)$.
	\par 
	\textbf{Example 2: linear series induced by holomorphic maps.}
	Let $\pi : X \to Y$ be a holomorphic map between two complex projective manifolds and $L_Y$ be a line bundle on $Y$.
	Any linear series $W$ on $(Y, L_Y)$ induces a linear series $\pi^* W$ on $R(X, \pi^* L_Y)$ by the pull-back.
	The respective rational maps factorize as in the following diagram
	\begin{equation}\label{eq_lin_ser_pull}
		\begin{tikzcd}
		X \arrow[r, "\pi"] \arrow[d, dashed] & Y \arrow[d, dashed] \\
\mathbb{P}(\pi^* W_k^*)  \arrow[r, hook] & \mathbb{P}(W_k^*).
		\end{tikzcd}
	\end{equation}
	Any holomorphic map can be decomposed as a composition of a surjective projection and an embedding, and it is convenient to separate these two cases in the discussion of the above construction.
	\par 
	If $\pi$ is surjective, the map from $W_k$ to $\pi^* W_k$ is an isomorphism, and the above procedure does not change the Kodaira-Iitaka dimension and the multiplicity.
	The class of birational linear series is preserved only if $\pi$ is itself birational by (\ref{eq_lin_ser_pull}). In particular, if $\pi$ is a non-trivial finite covering, the linear series $\pi^* W$ is not birational.
	Note, however, that if $\pi$ is birational, a theorem of Zariski, \cite[Corollary III.11.4]{HartsBook}, implies that the pull-back induces an isomorphism between $R(Y, L_Y)$ and $R(X, \pi^* L_Y)$, limiting the potential use of the above transformation.
	\par 
	When $Y$ is an irreducible variety instead of a manifold, one may apply this construction to a resolution of singularities, and therefore reduce the study of linear series on irreducible varieties to the corresponding study on manifolds (this shows in particular that the results of the present article extend immediately to the singular setting).
	\par 
	When $\pi$ is an embedding, and the initial linear series is the complete linear series, the above construction yields the so-called \textit{restricted linear series}, defined as
	\begin{equation}
		W_k := \Im (H^0(Y, L^{\otimes k}) \to H^0(X, \pi^* L^{\otimes l})).
	\end{equation}
	When $L_Y$ is big and $X$ lies outside of the augmented base locus, the resulting linear series in $R(X, \pi^* L)$ is birational by (\ref{eq_lin_ser_pull}), as the Kodaira-Iitaka map of $(Y, L_Y)$ is an isomorphism outside of the augmented base locus by the discussion after (\ref{eq_lin_ser_emb}), cf. \cite[Lemma 2.16]{LazMus}.
	In this setting, Theorem \ref{thm_vol_form} was proved in \cite{MatsumuraRestr} and \cite{HisamResBerg}, and versions of Theorem \ref{thm_part_bk_get} appeared in \cite{HisamResBerg} and \cite{ComanMarinNguyenRest}.
	\par 
	\textbf{Example 3: linear series shifted by a divisor.}
	We fix an arbitrary effective divisor $D$ on $X$, and a linear series $W = \oplus_{k = 0}^{\infty} W_k \subset R(X, L)$.
	The vector spaces $W_k^D$ obtained by a multiplication of $W_k$ by the $k$-th tensor power of the canonical holomorphic section of $\mathscr{O}(D)$ define a linear series in $R(X, L \otimes \mathscr{O}(D))$.
	Of course, this procedure does not change the Kodaira-Iitaka dimension and the multiplicity.
	It also preserves the class of birational linear series by the argument as in (\ref{eq_lin_ser_emb}).
	The study of linear series on arbitrary line bundles over a projective manifold can be reduced to the corresponding study on ample line bundles.
	\par 
	By combining Examples 2 and 3, we see in particular that any Fujita approximation for $L$ yields a birational linear series, see \cite[Definition 11.4.3 and Theorem 11.4.4]{LazarBookII}.
	\par 
	\textbf{Example 4: filtration-based linear series.}
	Recall that a \textit{decreasing $\real$-filtration} $\mathcal{F}$ of a vector space $V$ is a map from $\real$ to vector subspaces of $V$, $t \mapsto \mathcal{F}^t V$, verifying $\mathcal{F}^t V \subset \mathcal{F}^s V$ for $t > s$.
	A filtration $\mathcal{F}$ on $R(X, L)$ is a collection $(\mathcal{F}_k)_{k = 0}^{\infty}$ of decreasing filtrations $\mathcal{F}_k$ on $H^0(X, L^{\otimes k})$.
	We say that $\mathcal{F}$ is \textit{submultiplicative} if for any $t, s \in \real$, $k, l \in \nat$ we have 
	\begin{equation}\label{eq_sumb_filt}
		\mathcal{F}^t_k H^0(X, L^{\otimes k}) \cdot \mathcal{F}^s_l H^0(X, L^{\otimes l}) \subset \mathcal{F}^{t + s}_{k + l} H^0(X, L^{\otimes (k + l)}).
	\end{equation}
	We say that $\mathcal{F}$ is bounded if there is $C > 0$ so that for any $k \in \nat^*$, we have $\mathcal{F}^{Ck}_k H^0(X, L^{\otimes k}) = \{0\}$.
	\par 
	Let us now fix a bounded submultiplicative filtration $\mathcal{F}$.
	Clearly, for any $t \in \real$, the subspace $W_k := \mathcal{F}^{tk}_k H^0(X, L^{\otimes k})$ defines a graded linear series.
	Boucksom-Chen in \cite[Lemma 1.6]{BouckChen} verified that as long as $t$ does not attain a certain critical value $\lambda$ (which is characterized as the minimal $\lambda \in \real$ so that for any $\epsilon > 0$, we have $\dim \mathcal{F}^{(\lambda + \epsilon)k}_k H^0(X, L^{\otimes k}) = o(k^n)$), the resulting linear series contains an ample series.
	It is, hence, birational by the discussion following (\ref{eq_lin_ser_emb}).
	\par 
	The study of the filtration-based linear series is tightly related with the construction of the geodesic ray associated with the filtration, see \cite{PhongSturmTestGeodK, RossNystAnalTConf, HisamSpecMeas}. 
	In the context of geometric quantization, it has been recently studied in \cite{FinNarSim, FinSubmToepl, FinGQBig}.
	\par 
	\textbf{Example 5: linear series associated with an ideal sheaf.}
	Let us consider an ideal sheaf $\mathcal{I}$ in $\mathscr{O}_X$.
	Then $W_k := H^0(X, L^{\otimes k} \otimes \mathcal{I}^k)$ defines a linear series in $R(X, L)$.
	Of course, this provides a special case of the previous construction, if one defines the corresponding filtration as $\mathcal{F}^t_k H^0(X, L^{\otimes k}) := H^0(X, L^{\otimes k} \otimes \mathcal{I}^{\lceil t \rceil})$.
	\par 
	When the sheaf is given by the sheaf of holomorphic functions vanishing along a certain subvariety, the partial Bergman kernels have been studied in \cite{BermanEnvProj, ComMar, RossSingPartDens}.
	\par 
	A corresponding study when the power of the ideal sheaf is replaced by a multiplier ideal sheaf associated with a psh function has been done in \cite{DarvXiaVolumes}.
	Note, however, that the latter construction does not generally provide a linear series.
	\par 
	\textbf{Example 6: valuation-based linear series.}
	Let $v$ be a real valuation on the field or meromorphic function on $X$, $K(X)$.
	Clearly, it defines a valuation on $R(X, L)$.
	For any $t \in \real$, the subspace $W_k := \{\, s \in H^0(X, L^{\otimes k}) : v(s) \geq tk \}$ defines a graded linear series.
	Of course, it is a special case of the filtration-based linear series.
	\par
	For example, if $(X, L)$ carries a $\comp^*$-action, it induces the $\comp^*$-action on $H^0(X, L^{\otimes k})$, which hence admits a weight decomposition. 
	Then the direct sum of the vector spaces generated by the elements of the weight in $[tk, +\infty[$ defines a graded linear series, and the associated partial Bergman kernels has been studied in \cite{IoosPartial}.
	\par 
	\textbf{Example 7: product of two linear series.}
	Given two linear series $W^1, W^2$ in $R(X, L)$, one can define their product, $W^1 \cdot W^2$ as the smallest linear series containing all the products. 
	Immediately from the definitions, we see that if both $W^1, W^2$ are non-empty, then the Kodaira-Iitaka dimension of $W^1 \cdot W^2$ is no smaller that the maximum of the Kodaira-Iitaka dimensions of $W^1$ and $W^2$.
	When all three dimensions coincide, the respective multiplicities verify a Brunn-Minkowski type inequality, see \cite[Theorem 2.33]{KavehKhov}.
	Analogously to (\ref{eq_lin_ser_emb}), it is easy to see that if both $W^1, W^2$ are non-empty and at least one is birational, then $W^1 \cdot W^2$ is birational.
	\par 
	\textbf{Example 8: integral closure of a linear series.}
	Given a linear series $W$ in $R(X, L)$, one can define its integral closure, $\overline{W} = \oplus_{k = 0}^{+\infty} \overline{W}_k$, as follows
	\begin{multline}
		\overline{W}_k = \Big\{ s \in H^0(X, L^{\otimes k}) 
		\\
		: \text{there exists a monic polynomial } f(t) \in W[t] \text{ such that } f(s) = 0 
		\Big\}.
	\end{multline}
	It is then standard that $\overline{W}$ defined this way is indeed a linear series, see \cite[Theorem 4.2]{EisenbudBook} and \cite[Theorem 2.32]{SwanHuneke}.
	We denote by $Y_m$ and $\overline{Y}_m$ the closures of the images of $X$ with respect to the rational maps associated with $W_m$ and $\overline{W}_m$.
	Then by (\ref{eq_lin_ser_emb}), we have a dominant rational map
	\begin{equation}\label{eq_over_y_ym_rat}
		\overline{Y}_m \dashrightarrow Y_m.
	\end{equation}
	The first part of Theorem \ref{thm_integ_closuse} below with (\ref{eq_dim_ki_yk}) show that for $m$ large enough, we have $\dim(\overline{Y}_m) = \dim(Y_m)$.
	In particular, the map (\ref{eq_over_y_ym_rat}) is generically finite for $m$ large enough.
	We denote by ${\rm{deg}}(\overline{W} : W)$ the generic degree of this map (by a discussion after (\ref{eq_ki_maps}), it does not depend on $m$ as long as $m$ is large enough).
	In Section \ref{sect_lin_ser_sing_type}, building on Theorems \ref{thm_ki_form}, \ref{thm_vol_form}, we prove the following result.
	\begin{thm}\label{thm_integ_closuse}
		For any linear series $W$, $\kappa(\overline{W}) = \kappa(W)$ and ${\rm{vol}}_{\kappa}(\overline{W}) = {\rm{deg}}(\overline{W} : W) \cdot {\rm{vol}}_{\kappa}(W)$.
	\end{thm}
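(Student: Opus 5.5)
The plan is to derive both statements from Theorems \ref{thm_ki_form} and \ref{thm_vol_form}, applied once to $W$ and once to $\overline{W}$. The key intermediate claim is that the psh envelopes coincide:
\begin{equation*}
	P[\overline{W}, h^L] = P[W, h^L].
\end{equation*}
Once this identity is known, the measures $c_1(L, P[\cdot, h^L])^{i} \wedge \omega^{n-i}$ are the same for $W$ and $\overline{W}$. Theorem \ref{thm_ki_form} then immediately gives $\kappa(\overline{W}) = \kappa(W)$.

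To prove the identity, one inequality is free. Since $W \subset \overline{W}$, the sup-norm Fubini-Study metrics satisfy $FS(\ban_k^\infty[\overline{W}](h^L)) \leq FS(\ban_k^\infty[W](h^L))$, so $P[\overline{W}, h^L] \leq P[W, h^L]$.

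For the reverse inequality, take $s \in \overline{W}_k$ satisfying $s^d + a_1 s^{d-1} + \cdots + a_d = 0$. By homogeneity we may assume $a_j \in W_{jk}$. At each point $x$, the root bound for monic polynomials gives
\begin{equation*}
	|s(x)| \leq 2 \max_j |a_j(x)|^{1/j}.
\end{equation*}
Each $|a_j(x)|_{h^{jk}}$ is bounded by $\|a_j\|_{\sup} \cdot |\cdot|_{FS(\ban_{jk}^\infty[W])^{-1}}$ at $x$, and in the limit that quantity is controlled by $P[W,h^L]^{-jk}$. The point is to control $\|a_j\|_{\sup}$ in terms of $\|s\|_{\sup}$, and this is the main obstacle, since the coefficients of an integral equation are not canonically bounded.

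To get around it, I would use the algebraic fact that $\overline{W}$ is a finite module over a Noetherian approximation. More concretely, I would replace $s$ by its powers. Integrality of $s$ implies that $s^m$ lies in the $W$-module generated by $1, s, \ldots, s^{d-1}$ for all $m$, with coefficients in $W_{(m-i)k}$. A cleaner route to the same end is valuative: a section is integral over $W$ if and only if, for every divisorial valuation $v$, one has $v(s)/k \geq \inf_l v(W_l)/l$. The envelope $P[W,h^L]$ has exactly the singularities encoded by these asymptotic vanishing orders, via Demailly approximation and Lemma \ref{lem_env_sing_vers}. Either route shows that $\log|s|_{h^{k}}/k \leq \log P[W,h^L]^{-1/2} + o(1)$ up to the polar locus. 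Taking the regularized limit then yields $P[\overline{W}] \geq P[W]$.

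For the multiplicity, I would apply Theorem \ref{thm_vol_form} to both series. The integrals $\int_X c_1(L,P)^{\kappa}\wedge\omega^{n-\kappa}$ agree by the identity above, so the ratio of multiplicities equals the ratio of the fiber integrals, $\varphi_*[\omega]^{n-\kappa}$ for $W$ divided by $\overline{\varphi}_*[\omega]^{n-\kappa}$ for $\overline{W}$. Since $\varphi_m$ factors as $\overline{Y}_m \dashrightarrow Y_m$ composed with $\overline{\varphi}_m$, a general fiber of $\varphi_m$ is the disjoint union of ${\rm{deg}}(\overline{W}:W)$ general fibers of $\overline{\varphi}_m$, all cohomologous. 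Hence $\varphi_*[\omega]^{n-\kappa} = {\rm{deg}}(\overline{W}:W)\cdot \overline{\varphi}_*[\omega]^{n-\kappa}$, which gives ${\rm{vol}}_\kappa(\overline{W}) = {\rm{deg}}(\overline{W} : W) \cdot {\rm{vol}}_\kappa(W)$.
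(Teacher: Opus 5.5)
\textbf{Gap: the identity of envelopes is false.} Your whole argument rests on $P[\overline{W},h^L]=P[W,h^L]$, and this fails in general, so the ``reverse inequality'' cannot be proved by any route. Take the paper's own example: $X=\mathbb{P}^1$, $L=\mathscr{O}(1)$, $W_k=\mathrm{span}\{z_0^{k-2j}z_1^{2j}\}$.

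\begin{itemize}
\item Since $z_1^2\in W_2$, you get $z_1\in\overline{W}_1$, so $\overline{W}=R(X,L)$. For a smooth positively curved $h^L$ this gives $P[\overline{W},h^L]=P[h^L]=h^L$.
\item Every $s\in W_k$ is invariant under the involution $\sigma([z_0:z_1])=[z_0:-z_1]$, lifted to $L$. Hence $\|s\|_{\sup,h^L}=\|s\|_{\sup,\sigma^*h^L}$, which forces $FS(\ban_k^{\infty}[W](h^L))^{1/k}\geq\max(h^L,\sigma^*h^L)$ and so $P[W,h^L]\geq\max(h^L,\sigma^*h^L)$.
\end{itemize}

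If $h^L$ is not $\sigma$-invariant, $P[W,h^L]$ is strictly bigger than $P[\overline{W},h^L]$ on an open set. Here the two envelopes still have the same (bounded) singularity type and the same masses, which is why the theorem itself survives.

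\textbf{What your root bound actually proves, and the missing idea.} Write $P[W,h^L]=h^L\exp(-2\phi(W))$. Your root bound gives, for each $s\in\overline{W}_k$, the estimate $\frac1k\log|s|_{(h^L)^k}\leq\phi(W)+C_s$. The constant $C_s$ depends on the uncontrolled norms $\|a_j\|_{\sup}$. This is exactly the paper's inclusion $\overline{W}\subset\widetilde{W}=W(\phi(W))$, proved there by the same ``dominant term'' argument.

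Neither of your proposed routes removes the $s$-dependence:
\begin{itemize}
\item passing to powers $s^m$ only reproduces an $s$-dependent constant;
\item divisorial valuations only see singularity types, never the values of an envelope.
\end{itemize}
By the example above, no bound in terms of $\|s\|_{\sup}$ alone can exist. Moreover, since $C_s$ is not uniform over the unit ball of $\overline{W}_k$, you do not even get that $\phi(\overline{W})$ and $\phi(W)$ have the same singularity type.

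The fix is to pass through the Ross--Witt Nystr\"om envelope $P[\phi]$:
\begin{itemize}
\item Sup-normalized $s\in\overline{W}_k$ satisfy $\frac1k\log|s|_{(h^L)^k}\leq\min(\phi(W)+C_s,0)$, so $\phi(W)\leq\phi(\overline{W})\leq P[\phi(W)]$.
\item The mass identity (\ref{eq_mass_env_pres}) (Darvas--Di Nezza--Lu), together with Theorem \ref{thm_wn_monot}, then shows that $\int_X c_1(L,P[\cdot,h^L])^{i}\wedge\omega^{n-i}$ agree for $W$ and $\overline{W}$, for every $i$.
\end{itemize}
Equality of these masses, not of the measures, is all that Theorems \ref{thm_ki_form} and \ref{thm_vol_form} need, because the measures are nonnegative. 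With it, your comparison of the fiber integrals via ${\rm{deg}}(\overline{W}:W)$ finishes the proof as in the paper.
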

	\par 
	Note that if $W$ is birational, $\overline{W}$ stays birational by (\ref{eq_lin_ser_emb}), and then ${\rm{deg}}(\overline{W} : W) = 1$. 
	But it is easy to construct non-birational $W$ which have birational $\overline{W}$ (consider the linear series of polynomials of even degree inside of the ring of polynomials), and then ${\rm{deg}}(\overline{W} : W) > 1$.
	\par 
	\textbf{Example 9: invariant linear series.}
	When $L$ is ample, $X$ admits an action of a group $G$ which can be lifted to a holomorphic action on $L$, we can consider the $G$-invariant linear series $W_k := H^0(X,L^{\otimes k})^G$.
	If $G$ is a reductive Lie group, then the ring $W = \oplus_{k = 0}^{+ \infty} W_k$ is finitely generated, cf. \cite[\S 1]{MumfordGIT}.
	We denote by $W^{(d)}$ the restriction of $W$ to $R(X, L^{\otimes d})$.
	If one takes $d$ sufficiently large so that $W^{(d)}$ is generated by $W_d$, then the space $X /\!/ G := {\rm{Proj}} (W^{(d)})$ carries a canonical line bundle $L_G$, so that for $k \in \nat$ large enough, we have $H^0(X /\!/ G, L_G^{\otimes k}) \simeq W_{kd}$, cf. \cite[Exercise II.5.14]{HartsBook}.
	Moreover, there is a natural rational map $\pi : X \dashrightarrow  X /\!/ G$ given by the composition of the Kodaira-Iitaka map and a rational projection $\mathbb{P}(H^0(X, L^{\otimes kd})) \dashrightarrow \mathbb{P}(W_{kd}^*)$.
	Consider a resolution of the indeterminacies of this map, carried out simultaneously with a resolution of the singularities of the base locus of $H^0(X, L^{\otimes d})^G$, providing the following diagram
	\begin{equation}\label{eq_resol_ind}
		\begin{tikzcd}
		& \widehat{X} \arrow[dl, "p"'] \arrow[dr, "\widehat{{\rm{\pi}}}"] & \\
		X \arrow[rr, dashed, "\pi"'] & & X /\!/ G.
		\end{tikzcd}
	\end{equation}
	Then for $\widehat{L} := p^* L$, there is an effective divisor $E$ on $\widehat{X}$ (given by the resolution of the base locus of $W_d$) and an isomorphism
	\begin{equation}
		\widehat{L}^{\otimes d}
		\simeq 
		\widehat{{\rm{\pi}}}^* L_G \otimes \mathscr{O}(E).
	\end{equation}
	Since the map $p: \widehat{X} \to X$ is birational, $W^{(d)}$ can be seen as a linear series in $R(\widehat{X}, \widehat{L})$ as described in Example 2.
	Under this identification, $W^{(d)}$ coincides in large degrees with the image of the complete linear series, $R(X /\!/ G, L_G)$, by the procedures given in Examples 2 and 3.
	Partial Bergman kernel associated with this linear series has been studied in \cite{MaZhBKSR}.
	\par 
	\textbf{Example 10: linear series associated with a singularity type.}
	We fix a smooth metric $h^L_0$ on a psef line bundle $L$, and let $\alpha = c_1(L, h^L_0)$.
	We fix a function $\phi \in \psh(X, \alpha)$ and consider the set
	\begin{equation}\label{eq_wkphi_defn}
		W_k(\phi) := \Big\{
			s \in H^0(X, L^{\otimes k}) : \frac{1}{k} \log |s|_{h^L_0} \preceq \phi
		\Big\},
	\end{equation}
	where $u \preceq v$ means that there is a constant $C \in \real$ such that $u \leq v + C$ everywhere on $X$.
	We claim, following Witt Nyst\"om \cite[\S 2.2]{WittNystromCanGrowth}, that this defines a linear series $W(\phi) = \oplus_{k = 0}^{+\infty} W_k(\phi)$.
	\par 
	It is easy to see that if $s_1 \in W_k(\phi)$ and $s_2 \in W_l(\phi)$, then $s_1 \cdot s_2 \in W_{k + l}(\phi)$.
	Let us now consider $s_1, s_2 \in W_k(\phi)$. 
	Let $C > 0$ be such that $\frac{1}{k} \log |s_1|,  \frac{1}{k} \log |s_2| \leq \phi + C$.
	Then
	\begin{equation}
		 \frac{1}{k} \log |s_1 + s_2| \leq \phi + C + \frac{\log(2)}{k},
	\end{equation}
	which shows that $s_1 + s_2 \in W_k(\phi)$.
	Altogether, this shows that $W(\phi)$ is a linear series.
	\par 
	Let us now relate the Kodaira-Iitaka dimension of $W(\phi)$ with the numerical dimension of the positive current $T := \alpha + dd^c \phi$, defined as
	\begin{equation}\label{eq_nd_defn}
		{\rm{nd}}(T)
		=
		\max\Big\{
				i \in \{ 0, 1, \ldots, n \} :  T^{i} \wedge \omega^{n - i} \neq 0
		\Big\},
	\end{equation}
	where $\omega$ is an arbitrary fixed K\"ahler form.
	Note that our definition (\ref{eq_nd_defn}) is non-standard \cite{DemaillyOnCohPSEF}, \cite{CaoNumDim}.
	We denote by $\varphi(\phi)_* [\omega]^{n - \kappa(W(\phi))}$ the integral of $\omega$ along a general fiber of the rational map associated with $W(\phi)$ in high degrees.
	The following result will be proved in Section \ref{sect_lin_ser_sing_type}.
	\begin{thm}\label{thm_kappa_wphi}
		The following inequality holds $\kappa(W(\phi)) \leq {\rm{nd}}(T)$.
		Moreover, we have
		\begin{equation}
			{\rm{vol}}_{\kappa}(W(\phi))
			\leq
			\frac{1}{\varphi(\phi)_{*} [\omega]^{n - \kappa(W(\phi))}} \int_X T^{\kappa(W(\phi))} \wedge \omega^{n - \kappa(W(\phi))}.
		\end{equation}
	\end{thm}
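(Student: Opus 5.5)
Apply Theorems \ref{thm_ki_form} and \ref{thm_vol_form} to the linear series $W := W(\phi)$. The key step is to compare the envelope $P[W(\phi), h^L_0]$ with the singular metric $h^L_0 \cdot \exp(-2\phi)$. Once this comparison is in hand, the monotonicity formula, Theorem \ref{thm_wn_monot}, turns it into the desired inequalities for non-pluripolar masses.

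\textbf{Step 1: singularity comparison.} Write $P[W(\phi), h^L_0] = h^L_0 \cdot \exp(-2\psi)$ with $\psi \in \psh(X, \alpha)$. We claim that $\psi \preceq \phi$, i.e.\ $\psi$ is more singular than $\phi$. By definition (\ref{eq_fs_norm}), the potential of $FS({\textrm{Ban}}_k^{\infty}[W](h^L_0))^{1/k}$ is
\begin{equation*}
	\psi_k = \sup \Big\{ \tfrac{1}{k} \log |s|_{h^L_0} : s \in W_k(\phi), \ \sup_X |s|_{h^L_0} \leq 1 \Big\},
\end{equation*}
and $\psi = (\lim_k \psi_k)^*$. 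Each individual $s \in W_k(\phi)$ satisfies $\frac{1}{k}\log|s| \leq \phi + C_s$, but the constant $C_s$ is not uniform over the unit ball. This is the main obstacle. I would handle it by finite-dimensionality. Fix a basis $s_1, \ldots, s_{m_k}$ of $W_k(\phi)$. Any $s$ in the sup-norm unit ball has coefficients bounded by some $C_k$, since all norms on $W_k$ are equivalent. Hence $\psi_k \leq \phi + C'_k$ for each fixed $k$.

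This gives $\psi_k \preceq \phi$ for every $k$, but the constants $C'_k$ may grow with $k$. To pass to the limit, use that by Fekete's lemma $\lim_k \psi_k = \sup_k \psi_k$ along multiplicative subsequences. Since the definition of singularity type only involves constants, it suffices that $\psi$ is dominated by $\phi$ up to singularity type in the sense used by the monotonicity theorem. If uniform control fails, I would instead compare at each fixed $k$: use $\psi \geq \psi_k$ and $\psi_k \preceq \phi$ to obtain
\begin{equation*}
	\int (\alpha + \ddc \psi_k)^i \wedge \omega^{n-i} \leq \int T^i \wedge \omega^{n-i}.
\end{equation*}
Then let $k \to \infty$, using that $\psi_k$ increases to $\psi$ a.e.\ along $k = 2^j m$. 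Non-pluripolar masses are semicontinuous, or in fact continuous, along increasing sequences of potentials with this type of monotone convergence, by the results of \cite{BEGZ} and the monotonicity theorem. This limiting argument is the delicate point.

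\textbf{Step 2: comparison of masses.} From Step 1 and Theorem \ref{thm_wn_monot}, we get for every $i$
\begin{equation*}
	\int_X c_1(L, P[W(\phi), h^L_0])^i \wedge \omega^{n-i} \leq \int_X T^i \wedge \omega^{n-i}.
\end{equation*}
Take $i = \kappa(W(\phi))$. By Theorem \ref{thm_ki_form}, the left-hand side is nonzero, so the right-hand side is nonzero as well. The definition (\ref{eq_nd_defn}) then gives $\kappa(W(\phi)) \leq {\rm{nd}}(T)$.

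Next, divide both sides of the same inequality by $\varphi(\phi)_*[\omega]^{n-\kappa(W(\phi))}$, which is exactly the normalization appearing in Theorem \ref{thm_vol_form} for $W(\phi)$. Theorem \ref{thm_vol_form} identifies the left-hand side with ${\rm{vol}}_\kappa(W(\phi))$, and this yields the claimed bound.
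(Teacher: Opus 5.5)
Your first claim in Step 1, that $\psi \preceq \phi$, is false in general, so the fallback is not optional: it is the proof. Here is a counterexample. Take $X=\mathbb{P}^1$, $L=\mathscr{O}(1)$ with the Fubini--Study metric $h^L_0$, and a section $\sigma\in H^0(X,L)$ vanishing at $p$ with $\sup_X|\sigma|_{h^L_0}\le 1$. Set $\phi=-\sqrt{1-\log|\sigma|_{h^L_0}}$, which is $\alpha$-psh with zero Lelong number at $p$. Then $W_k(\phi)$ is exactly the space of sections vanishing at $p$. Using $s=\sigma\cdot t$ with $t$ a peak section at $x$, one gets $\psi_k(x)\ge \frac1k\log|\sigma(x)|_{h^L_0}\to 0$, hence $\psi\equiv 0$, whereas $\phi(p)=-\infty$. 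Consequently, Step 2 cannot apply Theorem \ref{thm_wn_monot} to the pair $(\psi,\phi)$; the mass inequality has to come from your fallback.

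That fallback is correct and complete. For each $j$, finite-dimensionality gives $\psi_{2^j}\le\phi+C_j$. Theorem \ref{thm_wn_monot} then bounds $\int(\alpha+\ddc\psi_{2^j})^i\wedge\omega^{n-i}$ by $\int T^i\wedge\omega^{n-i}$. By Lemma \ref{lem_fs_sm}, the $\psi_{2^j}$ increase to $\psi$ outside a pluripolar set. So Theorem \ref{thm_cont_begz} with $\chi\equiv 1$ gives convergence of the total masses. This is exactly the limiting step the paper performs at the start of the proof of Theorems \ref{thm_ki_form} and \ref{thm_vol_form}, so your ``delicate point'' is already settled there. Step 2 then concludes as you wrote.

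The paper resolves the non-uniform constants differently. It replaces $\phi$ by the Ross--Witt Nystr\"om envelope $P[\phi]$, which absorbs them. Each normalized $s\in W_k(\phi)$ satisfies $\frac1k\log|s|\le\min\{\phi+C_s,0\}$, so it is a competitor for $P[\phi]$. This gives the constant-free pointwise bound $\phi_k\le P[\phi]$, i.e.\ $P[W(\phi),h^L]\ge P[\phi](h^L)$. Monotonicity and the Darvas--Di Nezza--Lu identity $\int T[\phi]^i\wedge\omega^{n-i}=\int T^i\wedge\omega^{n-i}$ then finish the argument.

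Your route avoids $P[\phi]$ and that mass identity, using only tools already present in Section \ref{sect_fs_inter}. The paper's route buys a metric-level comparison, (\ref{eq_pphi_comp}), which it reuses in the proof of Theorem \ref{thm_anal_clos} to show that the integral term coincides for $W$ and $\widetilde{W}$.
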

	\begin{rem}
		When $W(\phi)$ contains an ample series (and hence $\kappa(W(\phi)) = {\rm{nd}}(T) = n$ and $\varphi(\phi)_{*} [\omega]^{n - \kappa(W(\phi))} = 1$), Theorem \ref{thm_kappa_wphi} is due to Witt Nystr\"om \cite[Theorem 2.20]{WittNystromCanGrowth}.
	\end{rem}
	Based on the above construction, in Section \ref{sect_lin_ser_sing_type}, we introduce the analytic closure of a linear series.
	We relate it with the integral closure, and show that Theorem \ref{thm_integ_closuse} follows from it.

	\section{Analytic formulas for the Kodaira-Iitaka dimension and the multiplicity}\label{sect_2}
		
	The main goal of this section is to express the Kodaira-Iitaka dimension and the multiplicity of graded linear series in terms of the intersection theory of the plurisubharmonic envelope associated with the linear series.
	More precisely, in Sections \ref{sect_min_sing} and \ref{sect_fujita}, we recall some preliminairies from pluripotential theory and the asymptotic study of linear series respectively.
	In Section \ref{sect_fs_inter}, we recall some results concerning the Fubini-Study operator and establish Theorems \ref{thm_ki_form} and \ref{thm_vol_form}.

	\subsection{Preliminaries from pluripotential theory}\label{sect_min_sing}
	The main objective of this section is to review several essential definitions and results from pluripotential theory. 
	We begin by recalling the definition of potentials with minimal singularities, some facts concerning the envelope construction and then discuss the monotonicity formula, the continuity of the non-pluripolar product and the projection formula.
	\par 
	To define potentials with minimal singularities, we first fix a real smooth closed $(1, 1)$-form $\alpha$ in the psef class $[\alpha]$.
	Let us introduce a partial order on the space of $\alpha$-psh functions on $X$. 
	We say that a $\alpha$-psh function $\phi_0$ is \emph{more singular} than $\phi_1$ (and denote it by $\phi_0 \preceq \phi_1$) if there exists a constant $C > 0$ such that $\phi_0 \leq \phi_1 + C$.  
	Furthermore, we define the equivalence class of $\alpha$-psh functions associated with $\phi_0$ with respect to this order as 
	\begin{equation}
		[\phi_0] = \Big\{ \psi \in \psh(X, \alpha) : \phi_0 \preceq \psi \text{ and } \psi \preceq \phi_0 \Big\},
	\end{equation}
	and call it the \textit{singularity type} of $\phi_0$.
	\par 
	We extend the above order to currents in the same cohomology class by comparing their associated potentials, and we call currents that are minimal with respect to this order \textit{currents with minimal singularities}.
	As has been observed by Demailly, cf. \cite{DPSPseudoeff}, in a given psef cohomological class $[\alpha] \in H^{1, 1}(X) \cap H^2(X, \real)$, there is always a closed positive current with minimal singularities.
	To construct such a current, consider an arbitrary smooth form $\alpha$ from $[\alpha]$ and consider $P(\alpha) := \alpha + \ddc V_{\alpha}^*$, where $V_{\alpha}$ is defined as
	\begin{equation}\label{eq_v_theta}
		V_{\alpha} := \sup \big\{ \psi \in \psh(X, \alpha) : \psi \leq 0 \big\}.
	\end{equation}	 
	\par
	This latter construction can be translated in the language of metrics on line bundles.
	More precisely, let $L$ be a psef line bundle and $h^L$ be an arbitrary semimetric on $L$, which is bounded from below over a non-pluripolar subset $E$ by a continuous metric.
	Let us consider the envelope $P[h^L]$ defined as in (\ref{eq_sic_env}).
	Generalizing the observation before (\ref{eq_v_theta}), we have the following well-known result, cf. \cite[Proposition 9.17]{GuedjZeriahBook} or \cite[Proposition 2.2]{GuedjLuZeriahEnv}.
	\begin{lem}\label{lem_env_sing_vers}
		The lower semicontinuous regularization, $P[h^L]_*$, of $P[h^L]$ is a singular metric with a psh potential with minimal singularities.
		Moreover, if $L$ is a big line bundle and $h^L_i$ is a decreasing sequence of continuous metrics converging towards $h^L$ outside of a pluripolar subset, then $P[h^L_i]$ converges towards $P[h^L]$ outside of a pluripolar subset.
	\end{lem}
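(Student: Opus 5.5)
Fix a smooth reference metric $h^L_0$ with $\alpha = c_1(L, h^L_0)$. I will write $h^L = h^L_0 e^{-2\phi}$, where $\phi$ is bounded below on the non-pluripolar set $E$ and takes values in $\real \cup \{+\infty\}$. Translated into potentials, $P[h^L] = h^L_0 e^{-2 V}$ with
\begin{equation*}
V := \sup\{ \psi \in \psh(X, \alpha) : \psi \leq \phi \}.
\end{equation*}
The plan for the first part has three steps. First, check that $V$ is bounded above: every competitor satisfies $\psi \leq \phi \leq C$ on $E$, and since $E$ is non-pluripolar, \cite[Theorem 9.17]{GuedjZeriahBook} bounds $\sup_X \psi$ uniformly. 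It follows that $V^*$ is $\alpha$-psh by Choquet's lemma and \cite[Proposition I.4.24]{DemCompl}, which is the psh potential of $P[h^L]_*$. Second, prove minimal singularities by comparison with $V_\alpha$ from (\ref{eq_v_theta}). Given any $\psi \in \psh(X,\alpha)$, we have $\psi - \sup_X \psi \leq 0$. I also want a lower bound for $\phi$ on all of $X$; for this I will assume, as in the applications, that $h^L$ is dominated by a continuous metric, i.e. $\phi \geq -C'$. Then $\psi - \sup_X\psi - C' \leq \phi$, hence $\psi \leq V^* + \sup\psi + C'$. The family is also nonempty, since $V_\alpha^* - C'$ belongs to it. So $V^*$ is less singular than every $\alpha$-psh function.

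The hypothesis really needed is only that $\phi$ is bounded below. If the statement intends only ``bounded below on $E$'', I would first replace $h^L$ by the envelope relative to $E$, which only decreases the envelope. I expect this careful reading of the hypotheses to be the main subtle point of the first part.

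For the second part, take $L$ big and $h^L_i = h^L_0 e^{-2\phi_i}$ continuous, with $\phi_i$ increasing to $\phi$ outside a pluripolar set $N$. Write $V_i$ for the corresponding envelopes. They are increasing in $i$ and bounded above by $V$, so $u := (\lim V_i)^*$ is $\alpha$-psh and equals $\lim V_i$ off a pluripolar set by Bedford–Taylor. Moreover $u \leq V^*$. For the reverse inequality, let $\psi \in \psh(X,\alpha)$ with $\psi \leq \phi$. I want to show $\psi \leq u$ quasi-everywhere.

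For this I will use bigness. Take a Kähler current $\alpha + \ddc \rho \geq \epsilon \omega$ with $\rho \leq 0$ and analytic singularities. For $t \in (0,1)$, set $\psi_t := (1-t)\psi + t\rho$. The aim is $\psi_t - \delta \leq \phi_i$ for $i$ large. I would get this by a Dini-type argument on $\{\rho > -M\}$, applied to suitable continuous regularizations of $\psi_t$ from Demailly's approximation (available because of the strict positivity), while near the poles of $\rho$ the factor $t\rho$ gives the inequality for free. This yields $\psi_t - \delta \leq V_i$, and letting $i\to\infty$, then $\delta, t \to 0$, gives $V \leq u$ outside a pluripolar set. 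Combined with $u \leq V^*$ and $V = V^*$ quasi-everywhere, this gives $P[h^L_i] \to P[h^L]$ off a pluripolar set.

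The main obstacle is the semicontinuity mismatch: $\phi$ is only a pointwise limit, not an upper semicontinuous function, so Dini does not apply directly to $\psi \leq \phi$. If the approach above fails, I would fall back on \cite[Proposition 2.2]{GuedjLuZeriahEnv}, which handles exactly this, using the fact that negligible sets are pluripolar.
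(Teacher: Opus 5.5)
Your first part is correct once you impose, as you do, the global bound $\phi \geq -C'$ (i.e.\ $h^L$ dominated by a continuous metric). This bound holds for the upper semicontinuous semimetrics to which the paper applies the lemma. The paper itself gives no argument and only refers to \cite[Proposition 9.17]{GuedjZeriahBook} and \cite[Proposition 2.2]{GuedjLuZeriahEnv}.

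Two small corrections to the first part:
\begin{itemize}
\item The hypothesis ``bounded from below over $E$ by a continuous metric'' means $\phi \leq C$ on $E$, not $\phi$ bounded below there. The upper bound is what you actually use.
\item The global bound cannot be removed by passing to an envelope relative to $E$. In an ample class, a real-valued potential equal to $\log|z - x_0|$ on a punctured neighbourhood of $x_0$ forces every competitor, hence $V^*$, to have a logarithmic pole at $x_0$, so $V^*$ does not have minimal singularities.
\end{itemize}

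The second part has a genuine gap: you never handle the pluripolar set $N$ off which $\phi_i \to \phi$, and this set is the only reason bigness is needed. Your key claim $\psi_t - \delta \leq \phi_i$ for $i$ large already fails in the following example: $X = \mathbb{P}^1$, $\alpha = \omega_{FS}$, $\rho = 0$, $\phi \equiv 0$, $\phi_i(x) = -\max\{0, 1 - i\, d(x, x_0)\}$ and $\psi = 0$. There $\psi_t - \delta = -\delta > -1 = \phi_i(x_0)$ for every $i$. A K\"ahler current with analytic singularities has no reason to have poles along $N$, which in general lies in no proper analytic subset.

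Conversely, the obstacle you single out is not one. The function $\phi_i - \psi_t$ is lower semicontinuous because $\psi_t$ is upper semicontinuous, so the open sets $\{\phi_i - \psi_t > -\delta\}$ increase with $i$, and once they cover $X$ compactness concludes. No Demailly regularization is needed. If $\phi_i \to \phi$ everywhere, this gives $V_i \nearrow V$ everywhere with no positivity assumption at all.

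The missing idea is to use bigness to produce $\rho_N \in \psh(X, \alpha)$ with $N \subset \{\rho_N = -\infty\}$ and $\rho_N \leq -C'$. Take $\rho_N = \rho + \epsilon \chi - c$, where $\alpha + \ddc \rho \geq \epsilon \omega$ and $\chi$ is an $\omega$-psh function equal to $-\infty$ on $N$; such $\chi$ exists for any pluripolar set on a compact K\"ahler manifold.
\begin{itemize}
\item For $\psi_t := (1-t)\psi + t\rho_N$, the increasing limit of $\phi_i - \psi_t$ is $\geq t(\phi - \rho_N) \geq 0$ off $N$ and $+\infty$ on $N$. Dini then gives $\psi_t - \delta \leq V_i$ for $i$ large, hence $V \leq \lim_i V_i$ off $\{\rho_N = -\infty\}$.
\item The same device applied to $(1-t)V_i + t\rho_N$ shows $V_i \leq V$ off that set. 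You also need this for $u \leq V^*$, because $\phi_i \leq \phi$ is only known off $N$.
\end{itemize}
Some such function is indispensable, as the trivial bundle over $\mathbb{P}^1$ with $\alpha = 0$ shows. There the envelopes are constants, and in the example above $V_i = \inf \phi_i = -1$ while $V = \inf \phi = 0$.
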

	\par 
	The following result, which will play a central role in our work, builds on contributions of many mathematicians, see \cite{BEGZ, WittNystrMonotonicity, LuChinhCompMA, LuNguen, DDLMonoton}, and is nowadays called the \textit{monotonicity formula}. 
	We use the version due to Vu \cite[Theorem 4.4]{VuDucRelNonpp}.
	\begin{thm}\label{thm_wn_monot}
		Let $T_j$, $T'_j$, $j = 1, \ldots, n$, be closed positive $(1, 1)$-currents on $X$ such that $T_j$, $T'_j$ are in the same cohomology class, and $T'_j$ is less singular than $T_j$ for every $j$. 
		Then, we have
		\begin{equation}
			\int_X T_1 \wedge \cdots \wedge T_n \leq \int_X T'_1 \wedge \cdots \wedge T'_n.
		\end{equation}
	\end{thm}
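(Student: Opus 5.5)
This is the monotonicity formula of Vu, which the paper quotes from \cite[Theorem 4.4]{VuDucRelNonpp}; a proof would go as follows.

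\emph{Reduction to one current.} Since the non-pluripolar product is multilinear and symmetric, it suffices to replace the currents one at a time. So we show: if $T_1$ is more singular than $T'_1$ and all other currents are fixed, then $\int_X T_1 \wedge T_2 \wedge \cdots \wedge T_n \leq \int_X T'_1 \wedge T_2 \wedge \cdots \wedge T_n$. Iterating over $j = 1, \ldots, n$ then gives the theorem.

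\emph{Setup.} Write $T_1 = \alpha + \ddc \phi$ and $T'_1 = \alpha + \ddc \psi$ with $\phi \leq \psi + C$; after normalizing, $\phi \leq \psi$. Set
\begin{equation*}
\phi_t := \max(\phi, \psi - t), \qquad t > 0.
\end{equation*}
Then $\phi_t$ is $\alpha$-psh and has the same singularity type as $\psi$, because $\psi - t \leq \phi_t \leq \psi$.

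\emph{Step 1: equal singularity types give equal mass.} We claim that for $\phi_t \sim \psi$ the two masses $\int (\alpha + \ddc \phi_t) \wedge T_2 \wedge \cdots \wedge T_n$ and $\int T'_1 \wedge T_2 \wedge \cdots \wedge T_n$ coincide. When the other potentials have small unbounded locus, this follows from Stokes-type integration by parts on the plurifine open sets $\{\psi > -k\}$, using locality of the non-pluripolar product there, and then letting $k \to \infty$. In general one approximates each $T_j$ by the canonical truncations $\max(u_j, V_j - k)$, which do not change the non-pluripolar product on $\{u_j > V_j - k\}$, and passes to the limit by monotone convergence.

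\emph{Step 2: locality and limit.} On the plurifine open set $\{\phi > \psi - t\}$ we have $\phi_t = \phi$, so by plurifine locality
\begin{equation*}
\mathbf{1}_{\{\phi > \psi - t\}} \, (\alpha + \ddc \phi_t) \wedge \cdots = \mathbf{1}_{\{\phi > \psi - t\}} \, T_1 \wedge \cdots .
\end{equation*}
Hence
\begin{equation*}
\int_{\{\phi > \psi - t\}} T_1 \wedge T_2 \wedge \cdots \wedge T_n \leq \int_X (\alpha + \ddc \phi_t) \wedge T_2 \wedge \cdots \wedge T_n = \int_X T'_1 \wedge T_2 \wedge \cdots \wedge T_n,
\end{equation*}
where the equality is Step 1. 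As $t \to \infty$, the sets $\{\phi > \psi - t\}$ increase to $\{\phi > -\infty\}$. The non-pluripolar measure puts no mass on the pluripolar set $\{\phi = -\infty\}$, so monotone convergence gives the desired inequality.

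\emph{Main obstacle.} The crux is Step 1, namely that mass is invariant under a change of potential within the same singularity type when the other currents are arbitrary (possibly non-full) positive currents. This is exactly where the earlier works \cite{BEGZ, WittNystrMonotonicity, DDLMonoton} needed either small unbounded loci or identical currents. I would try to run the integration by parts with the other factors truncated as $\max(u_j, V_j - k)$. For a fixed truncation level $k$, the products become locally bounded off a common set, so the Bedford--Taylor calculus applies and the masses agree. One then lets $k \to \infty$ using monotonicity of the truncated non-pluripolar masses in $k$.
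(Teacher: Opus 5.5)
Your reduction to a single factor and your Step 2 are correct. Since $\phi_t=\max(\phi,\psi-t)$ has the singularity type of $\psi$, plurifine locality on $\{\phi>\psi-t\}$ together with monotone convergence (non-pluripolar products do not charge $\{\phi=-\infty\}$) reduces the theorem to equality of masses for potentials of the same singularity type. This is the standard reduction of \cite{WittNystrMonotonicity, DDLMonoton}. The paper itself does not prove the theorem; it simply quotes \cite[Theorem 4.4]{VuDucRelNonpp}.

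The gap is that Step 1 \emph{is} the theorem, and your sketch of it does not work. Truncating the other factors to $u_j^k:=\max(u_j,V_j-k)$ leaves $\phi_t$ and $\psi$ untouched, and these may be $-\infty$ on a dense set. So there is no common set off which all potentials are locally bounded, and Bedford--Taylor calculus does not apply. The integration by parts of \cite{BEGZ} requires small unbounded locus for \emph{all} the potentials involved, including $\phi_t$ and $\psi$. There is also no Stokes theorem on plurifine open sets such as $\{\psi>-k\}$, which can have empty interior.

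Even granting equality for each fixed $k$, the limit $k\to\infty$ destroys it. Each $u_j^k$ has minimal singularities, so the total masses of the truncated products do not see the singularities of $T_j$. In general they do not converge to the non-pluripolar masses you want. For example, on $\mathbb{P}^2$ take $\theta=2\omega_{\mathrm{FS}}$, $T_1=\theta$ and $T_2=\theta+\ddc u_2=[D]+\omega_{\mathrm{FS}}$ for a line $D$. Then $\int_X\theta\wedge(\theta+\ddc u_2^k)=4$ for every $k$, whereas $\int_X\langle T_1\wedge T_2\rangle=2$.

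Plurifine locality only identifies the restrictions of the measures to $\bigcap_j\{u_j>V_j-k\}$, and equality of total masses says nothing about these restrictions. What you would actually need is that the mass escaping to $\bigcup_j\{u_j\le V_j-k\}$ has the same limit for $\phi_t$ and for $\psi$. That is the original difficulty in another guise. Moreover, the \emph{monotonicity in $k$} you invoke is itself an instance of the statement being proved.

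By multilinearity you can at least reduce Step 1 to the pure case. Write $T_j=\theta^j+\ddc u_j$, and compare the potentials $\sum_j s_j u_j$ and $\sum_j s_j u'_j$ (with $u'_j$ of the singularity type of $u_j$) in the class of $\sum_j s_j\theta^j$; the coefficient of $s_1\cdots s_n$ then gives the mixed identity. But the pure identity $\int_X\langle(\alpha+\ddc\phi_t)^n\rangle=\int_X\langle(\alpha+\ddc\psi)^n\rangle$ is exactly Witt Nystr\"om's theorem, which needs a genuine argument. Your proposal assumes it rather than proving it.
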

	\begin{rem}\label{rem_sam_sing_type}
		In particular, if the currents $T_i$ and $T'_i$ have the same singularity type, we have
		\begin{equation}
			\int_X T_1 \wedge \cdots \wedge T_n = \int_X T'_1 \wedge \cdots \wedge T'_n.
		\end{equation}
	\end{rem}
	We shall also rely on the fact that non-pluripolar products behave well with respect to monotone sequences of potentials.
	To state precisely the result that we shall use, recall that a function $u$ is called \textit{quasi-continuous} if for each $\varepsilon > 0$, there exists an open set $U$ such that $\operatorname{Cap}_\omega(U) < \varepsilon$ and the restriction of $u$ on $X \setminus U$ is continuous.
	Above $\operatorname{Cap}_\omega$ is the Monge-Ampère capacity, see \cite[Definition 4.16]{GuedjZeriahBook}.
	Most importantly for us, plurisubharmonic functions (and hence their differences) are known to be quasi-continuous, see \cite[Theorem 4.20]{GuedjZeriahBook}.
	\par 
	Several versions of the following two results are available, cf. \cite[Theorem 2.17]{BEGZ}. 
	Here we need two variations following easily from Darvas-Di Nezza-Lu \cite[Theorem 2.3 and Remark 2.5]{DDLMonoton} and \cite[Theorem 2.6 and Remark 3.4]{DDLSurvey}.
	\begin{thm}\label{thm_cont_begz}
		Let $\alpha_i$, $i \in \{1, \ldots, n\}$ be smooth closed real $(1, 1)$-forms on $X$ representing psef classes. 
		Suppose that $u_i, u_i^k \in \psh(X, \alpha_i)$ are such that $u_i^k$ increase towards $u_j$ outside of a pluripolar subset, as $k \to \infty$.
		Let $\chi_k, \chi : X \to \real$ be quasi-continuous and uniformly bounded such that $\chi_k$ converges to $\chi$, as $k \to \infty$, in capacity.
		We denote $T_i^k := \alpha_i + \ddc u_i^k$ and $T_i := \alpha_i + \ddc u_i$.
		Then $\chi_k \cdot T_1^k \wedge \cdots \wedge T_n^k$ converge weakly towards $\chi \cdot T_1 \wedge \cdots \wedge T_n$, as $k \to \infty$.
	\end{thm}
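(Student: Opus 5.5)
The plan is to reduce to two facts: convergence of the unweighted measures $T_1^k \wedge \cdots \wedge T_n^k$ to $T_1 \wedge \cdots \wedge T_n$ together with convergence of total masses, and a uniform smallness estimate that lets us insert the weights $\chi_k$.

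\emph{Step 1: masses and the unweighted limit.} Since the $u_i^k$ are $\alpha_i$-psh and increase to $u_i$ off a pluripolar set, we have $u_i^k \leq u_i$ everywhere. Hence $T_i^k$ is more singular than $T_i$, and Theorem \ref{thm_wn_monot} gives
\begin{equation*}
\int_X T_1^k \wedge \cdots \wedge T_n^k \leq \int_X T_1 \wedge \cdots \wedge T_n .
\end{equation*}
For the reverse direction we prove lower semicontinuity: for every continuous $f \geq 0$,
\begin{equation*}
\liminf_k \int_X f \, T_1^k \wedge \cdots \wedge T_n^k \geq \int_X f \, T_1 \wedge \cdots \wedge T_n .
\end{equation*}
To do this, fix $C>0$ and put $v_i^{k,C} := \max(u_i^k, V_{\alpha_i} - C)$ and $v_i^C := \max(u_i, V_{\alpha_i}-C)$. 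These have minimal singularities, hence are locally bounded modulo the common bounded part, and $v_i^{k,C}$ increases to $v_i^C$. Bedford--Taylor continuity along increasing sequences, which converge in capacity, then gives weak convergence of the truncated Monge--Ampère measures, also after multiplication by bounded quasi-continuous functions. On the plurifine open set $O^k_C := \cap_i \{u_i^k > V_{\alpha_i}-C\}$ the non-pluripolar product of the $T_i^k$ coincides with the truncated one, by locality in the plurifine topology. The sets $O^k_C$ increase in $k$ to $O_C := \cap_i\{u_i > V_{\alpha_i}-C\}$ up to a pluripolar set. Writing $\mathbf{1}_{O^k_C} = \lim_j \min(1, j\max_i(\ldots))$ as quasi-continuous cut-offs built from the differences $u_i^k - V_{\alpha_i}+C$, we get
\begin{equation*}
\liminf_k \int f \, \mathbf{1}_{O^k_C} (T^k)^n \geq \int f \, \mathbf{1}_{O_C} T^n .
\end{equation*}
Letting $C\to\infty$ gives the lower semicontinuity. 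Together with the mass bound, this yields both weak convergence and $\int_X (T^k)^n \to \int_X T^n$.

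\emph{Step 2: inserting the weights.} Write
\begin{equation*}
\chi_k (T^k)^n - \chi T^n = (\chi_k - \chi)(T^k)^n + \big(\chi (T^k)^n - \chi T^n\big).
\end{equation*}
For both terms the key estimate is uniform-in-$k$ smallness of the mass of $(T^k)^n$ outside $O^k_C$. This follows from Step 1: the masses converge, and the mass on $O^k_C$ has liminf at least $\int_{O_C} T^n$, which tends to the full mass as $C\to\infty$. Hence $\sup_k \int_{X\setminus O^k_C}(T^k)^n \to 0$ as $C \to \infty$.

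On $O^k_C$ the measure is dominated by $\mathrm{const}(C)\cdot \operatorname{Cap}_\omega$, because the truncated potentials are bounded relative to $V_{\alpha_i}$ and the standard Chern--Levine--Nirenberg type estimate applies. Convergence of $\chi_k$ to $\chi$ in capacity, with uniform bounds, therefore kills the first term. For the second term, approximate the quasi-continuous $\chi$ by a continuous $\tilde\chi$ off an open set of small capacity. The same domination controls the error, and weak convergence from Step 1 handles $\tilde\chi$.

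The main obstacle is Step 1's lower semicontinuity on the plurifine open sets $O^k_C$, which move with $k$. Here I would follow closely Darvas--Di Nezza--Lu \cite[Theorem 2.3]{DDLMonoton}, using the monotonicity $O^k_C \subset O^{k+1}_C$ and the quasi-continuous cut-off functions $\min\bigl(1, j(u_i^k - V_{\alpha_i}+C)_+\bigr)$. The weighted statement then follows as in \cite[Remark 3.4]{DDLSurvey}.
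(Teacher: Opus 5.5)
Your Step 1 is essentially the Darvas--Di Nezza--Lu argument, which the paper uses as a black box, and your mass bound from Theorem \ref{thm_wn_monot} is exactly the paper's. For big classes the strategy is sound. As written, however, the proposal has two genuine gaps.

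\textbf{First gap: psef classes are not covered.} The statement only assumes the classes are psef, and your Step 1 implicitly uses bigness. The functions $\max(u_i^k, V_{\alpha_i}-C)$ have minimal singularities, but that only means they are bounded \emph{relative to} $V_{\alpha_i}$, not locally bounded. $V_{\alpha_i}$ is locally bounded only on the ample locus of a big class. In a merely psef class there is no ample locus, so Bedford--Taylor continuity cannot be invoked. Worse, convergence of non-pluripolar products along increasing sequences with minimal singularities in a psef class is itself a special case of what you are trying to prove.

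The paper supplies a reduction that your proposal lacks. It first proves the result for big classes (DDL, plus the fact that monotone convergence implies convergence in capacity, plus Theorem \ref{thm_wn_monot} for the mass inequality). For psef $\alpha_i$, it then applies this to the big classes $\alpha_i+\epsilon\omega$, in which $u_i^k, u_i$ are still quasi-psh with positive curvature. By multilinearity of the non-pluripolar product, the extra mixed terms have total mass $O(\epsilon)$ uniformly in $k$: non-pluripolar masses are bounded by cohomological data, and the $\chi_k$ are uniformly bounded. Letting $\epsilon\to 0$ finishes. Alternatively, you could truncate local potentials chartwise by $\max(\psi,-C)$, as in the BEGZ definition, instead of truncating against $V_{\alpha_i}-C$.

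\textbf{Second gap: the capacity domination in Step 2 is unjustified.} You claim $\mathbf{1}_{O^k_C}\,(T^k)^n \leq \mathrm{const}(C)\cdot\operatorname{Cap}_\omega$. This is not justified even for big classes. Chern--Levine--Nirenberg-type bounds depend on the oscillation of the potential. The condition $u_i^k > V_{\alpha_i}-C$ gives no lower bound where $V_{\alpha_i}$ tends to $-\infty$ near the non-ample locus.

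DDL, and hence the paper, need no such domination. They prove the $\liminf$ inequality directly for nonnegative quasi-continuous weights converging in capacity. This step can be localized to compact subsets of the ample locus, where the truncated potentials are uniformly bounded and local convergence in capacity applies. They then apply it to $M\pm\chi_k$, where $M$ bounds $|\chi_k|$, and combine it with the mass inequality to get the matching $\limsup$. You could repair your Step 2 the same way, or by adding this localization to compact subsets of the ample locus.

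A minor slip: the cut-off for the intersection $O^k_C$ should use $\min_i$ (or a product over $i$), not $\max_i$.
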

	\begin{proof}
		The authors of \cite{DDLSurvey} proved the analogue of Theorem \ref{thm_cont_begz} where they assumed that $u_i^k$ converge towards $u_j$ in capacity, the classes $\alpha_i$ are big, and the following inequality is satisfied 
		\begin{equation}\label{eq_increasing_mass_cond}
			\limsup_{k \to \infty} \int T_1^k \wedge \cdots \wedge T_n^k
			\leq
			\int T_1 \wedge \cdots \wedge T_n,
		\end{equation}
		\par 
		Note that monotone convergence is stronger than convergence in capacity, cf. \cite[Theorem 4.25]{GuedjZeriahBook}.
		Also, under our increasing assumption on $u_i^k$, $k \in \nat$, the inequality (\ref{eq_increasing_mass_cond}) follows immediately from Theorem \ref{thm_wn_monot}.
		This establishes Theorem \ref{thm_cont_begz} for big classes. 
		But in realms of Theorem \ref{thm_cont_begz}, it is sufficient for $\alpha_i$ to be psef, as for any $\epsilon > 0$, one can apply the result for $\alpha_i + \epsilon \omega$ for some K\"ahler form $\omega$, take a limit $\epsilon \to 0+$, and apply the multilinearity of the non-pluripolar product, cf. \cite[Proposition 1.4c)]{BEGZ}.
	\end{proof}
	Another version concerns the convergence result in the big setting.
	\begin{thm}\label{thm_cont_begz2}
		The conclusion of Theorem \ref{thm_cont_begz} remain valid if we assume that the cohomology classes of $\alpha_i$ are big, $u_i$ have minimal singularities, $u_i^k$ are monotone in $k$, and converge towards $u_j$, as $k \to \infty$, outside of a pluripolar subset.
	\end{thm}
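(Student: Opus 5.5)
The proof will follow the template of Theorem \ref{thm_cont_begz}: reduce to the result of Darvas-Di Nezza-Lu \cite[Theorem 2.6 and Remark 3.4]{DDLSurvey}, which gives weak convergence of $\chi_k \cdot T_1^k \wedge \cdots \wedge T_n^k$ to $\chi \cdot T_1 \wedge \cdots \wedge T_n$ under three hypotheses. These are that the classes are big, that $u_i^k \to u_i$ in capacity, and that the mass inequality (\ref{eq_increasing_mass_cond}) holds. Bigness is assumed here, so only the other two need checking.

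\emph{Convergence in capacity.} By assumption, $u_i^k$ is monotone in $k$ and converges to $u_i$ outside a pluripolar set. If the sequence is decreasing, its limit is $\alpha_i$-psh and, agreeing with $u_i$ off a pluripolar set, equals $u_i$ everywhere, since two $\alpha_i$-psh functions equal almost everywhere coincide. If it is increasing, $(\lim_k u_i^k)^* = u_i$, with equality off a pluripolar set by Bedford-Taylor. In both cases monotone convergence implies convergence in capacity, by \cite[Theorem 4.25]{GuedjZeriahBook}.

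\emph{Mass inequality.} Here the minimal singularities of $u_i$ are used. Each $u_i^k$ is $\alpha_i$-psh, so $u_i^k \preceq u_i$ because $u_i$ has minimal singularities in $[\alpha_i]$. That is, $T_i$ is less singular than $T_i^k$, with both in the same cohomology class. Theorem \ref{thm_wn_monot} then gives $\int T_1^k \wedge \cdots \wedge T_n^k \leq \int T_1 \wedge \cdots \wedge T_n$ for every $k$, and hence (\ref{eq_increasing_mass_cond}). This step does not depend on the direction of monotonicity. That matters because in the decreasing case Theorem \ref{thm_wn_monot} alone would only give the reverse inequality; minimal singularities of the limit is exactly what repairs this.

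\emph{Main obstacle.} The delicate point is the decreasing case, specifically making sure that the version in \cite{DDLSurvey} allows non-uniformly-bounded potentials with merely the capacity-convergence and mass hypotheses. If one prefers to avoid this, the fallback is as follows. For a decreasing sequence towards a potential of minimal singularities, the masses of $T_1^k \wedge \cdots \wedge T_n^k$ all equal the full mass $\int T_1 \wedge \cdots \wedge T_n$, by Theorem \ref{thm_wn_monot} applied in both directions (note $u_i \leq u_i^k$). Then one can invoke \cite[Theorem 2.3 and Remark 2.5]{DDLMonoton} as in the proof of Theorem \ref{thm_cont_begz}, handling the quasi-continuous bounded weights $\chi_k$ via convergence in capacity. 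One should also observe that no passage $\epsilon \to 0$ is needed here, since the classes are already big.
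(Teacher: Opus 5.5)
Your proposal is correct and takes the same route as the paper. The paper invokes \cite{DDLSurvey} exactly as in Theorem~\ref{thm_cont_begz}, with monotone convergence giving convergence in capacity, and its only new input is that (\ref{eq_increasing_mass_cond}) follows from Theorem~\ref{thm_wn_monot} together with the minimal singularities of the $u_i$; your extra remarks on why this is needed in the decreasing case and why no passage $\epsilon \to 0$ is required are accurate.
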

	\begin{proof}
		The proof follows from \cite{DDLSurvey} exactly as in Theorem \ref{thm_cont_begz}, with the only additional observation being that the condition (\ref{eq_increasing_mass_cond}) is ensured by Theorem \ref{thm_wn_monot} and our assumption that $u_i$ have minimal singularities.
	\end{proof}
	\par 
	Finally, we state a version of the projection formula. 
	Although the formulation given below is likely to be far from optimal in terms of its hypotheses, it is the version that we shall use.
	Let $\pi : X \to Y$ be a surjective map between the compact complex manifolds of dimension $n$ and $\kappa$ respectively. 
	Assume $\alpha$ is a closed smooth $(1, 1)$-form on $Y$, and $\omega_0$ is a smooth semipositive closed $(1, 1)$-form from a big class on $X$ (for example, a K\"ahler metric).
	We denote by $\pi_* [\omega_0]^{n - \kappa}$ the integral of $\omega_0^{n - \kappa}$ along a general fiber of $\pi$.
	\begin{prop}\label{prop_proj_fla}
		Assume the cohomological class of $\alpha$ is big and that $\phi \in \psh(Y, \alpha)$ is bounded.
		Let $\chi$ be a quasi-continuous bounded function on $Y$.
		Then we have
		\begin{equation}
			\int_X \pi^* \chi \cdot (\pi^* \alpha + dd^c \pi^* \phi)^{\kappa} \wedge \omega_0^{n - \kappa}
			=
			\pi_*[\omega_0]^{n - \kappa}
			\cdot
			\int_Y \chi \cdot (\alpha + dd^c \phi)^{\kappa}.
		\end{equation}
	\end{prop}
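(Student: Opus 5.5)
We will first reduce to the case where $\phi$ is smooth, using Theorem \ref{thm_cont_begz2}, and then prove the smooth case by a fibrewise computation.

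\textbf{Smooth case.} Suppose first that $\phi$ is smooth and $\chi$ is continuous. Put $\beta := \alpha + \ddc \phi$, a smooth closed $(1,1)$-form on $Y$. Then $(\pi^*\alpha + \ddc \pi^*\phi)^{\kappa} = \pi^* \beta^{\kappa}$, since pull-back commutes with $\ddc$ and with wedge products of smooth forms. The left-hand side is therefore $\int_X \pi^*(\chi \beta^\kappa) \wedge \omega_0^{n-\kappa}$.

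Let $Y^\circ \subset Y$ be the Zariski open set over which $\pi$ is a submersion. Its complement $Y \setminus Y^\circ$ is a proper analytic subset, and $\pi^{-1}(Y\setminus Y^\circ)$ is a proper analytic subset of $X$, hence of Lebesgue measure zero. So both integrals can be computed over $Y^\circ$ and $\pi^{-1}(Y^\circ)$. Fubini's theorem for the submersion $\pi$ over $Y^\circ$ gives
\begin{equation*}
\int_{\pi^{-1}(Y^\circ)} \pi^*(\chi\beta^\kappa)\wedge\omega_0^{n-\kappa} = \int_{Y^\circ} \chi\, \beta^\kappa \cdot f, \qquad f(y) := \int_{\pi^{-1}(y)} \omega_0^{n-\kappa}.
\end{equation*}
Since $\omega_0$ is closed, $f$ is locally constant on the connected set $Y^\circ$, hence equal to $\pi_*[\omega_0]^{n-\kappa}$. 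This proves the formula in the smooth case.

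\textbf{Bounded $\phi$: convergence on $Y$.} Now let $\phi$ be bounded. Since $[\alpha]$ is big, Demailly regularization with a Kähler current gives smooth functions $\phi_j$ that decrease to $\phi$. Their curvature is $\alpha+\ddc\phi_j \geq -\epsilon_j \omega_Y$ with $\epsilon_j \to 0$. To handle this loss of positivity, we would use the more robust route below: monotone approximation by bounded $\alpha$-psh functions, applied simultaneously upstairs and downstairs.

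Bounded $\alpha$-psh functions are less singular than any $\alpha$-psh function, so they have minimal singularities. Hence Theorem \ref{thm_cont_begz2} applies on $Y$ to any monotone sequence of bounded $\alpha$-psh functions converging to $\phi$. It yields $\int_Y \chi (\alpha + \ddc \phi_j)^\kappa \to \int_Y \chi(\alpha+\ddc\phi)^\kappa$, also for quasi-continuous bounded $\chi$.

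\textbf{Convergence on $X$.} Upstairs, the class $\pi^*[\alpha]$ is not big, so Theorem \ref{thm_cont_begz2} does not apply directly; we use Theorem \ref{thm_cont_begz} instead. For an increasing sequence, it gives convergence of $\pi^*\chi \cdot (\pi^*\alpha + \ddc\pi^*\phi_j)^\kappa \wedge \omega_0^{n-\kappa}$ with $\omega_0$ fixed; here $\pi^*\chi$ is quasi-continuous as the pull-back of a quasi-continuous function. For a decreasing sequence of bounded potentials, the Bedford–Taylor continuity gives the same convergence. Both apply because all potentials are bounded, so the non-pluripolar product coincides with the Bedford–Taylor product.

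\textbf{Choice of approximants.} It therefore suffices to approximate $\phi$ by bounded $\alpha$-psh functions that are smooth on a Zariski open set. Following Boucksom's regularization in a big class, take $\phi_j = \max(\phi, \psi_j)$-type envelopes, where the $\psi_j$ are smooth on $Y\setminus Z$ and $Z$ is a proper analytic subset. On $Y\setminus Z$ we may smooth locally, and the smooth case applies there. The currents charge neither $Z$ nor $\pi^{-1}(Z)$, since non-pluripolar products put no mass on pluripolar sets. Finally, we pass from continuous $\chi$ to quasi-continuous $\chi$ by approximation in capacity.

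The main obstacle is this approximation step. We need approximants whose curvature is genuinely positive, so that the continuity theorems apply both on $Y$ and on $X$, while still being regular enough for the fibrewise Fubini computation. We would first try to get this from the Kähler current in $[\alpha]$, which is singular only along an analytic set, combined with local smoothing on the complement of that set.
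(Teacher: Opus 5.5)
Your smooth case is correct and matches the paper's (Sard plus Ehresmann, i.e. Fubini over the submersion locus). The gap is the passage to bounded $\phi$. You never produce the approximants your argument needs, and the construction you sketch does not give them. On the set where $\phi>\psi_j$, the function $\max(\phi,\psi_j)$ is just $\phi$. If, as in Boucksom's construction, $\psi_j$ has poles along $Z$, this set contains a neighbourhood of $Z$, because $\phi$ is bounded. So these functions are in general not smooth on any Zariski open set. Moreover, "smoothing locally on $Y\setminus Z$" does not produce global $\alpha$-psh functions, which is what Theorems \ref{thm_cont_begz} and \ref{thm_cont_begz2} require. Monotonicity in $j$ is also never addressed. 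As written, the reduction to the smooth case is therefore not carried out, as you acknowledge yourself.

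The missing idea is that you do not need genuinely $\alpha$-psh approximants. The loss of positivity in Demailly's regularization, which made you abandon it, is harmless.

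\begin{itemize}
\item Fix $\epsilon>0$. For $i$ large, the Demailly approximants $\phi_i$ are smooth and lie in $\psh(Y,\alpha+\epsilon\omega_Y)$. The class $[\alpha+\epsilon\omega_Y]$ is big, and the bounded function $\phi$ has minimal singularities in it, so Theorem \ref{thm_cont_begz2} applies on $Y$.
\item Upstairs, add $\epsilon\omega$ with $\omega$ Kähler on $X$. The class $\pi^*[\alpha+\epsilon\omega_Y]+\epsilon[\omega]$ is then big, and $\pi^*\phi$ has minimal singularities in it. So Theorem \ref{thm_cont_begz2} also applies on $X$, which removes your concern that $\pi^*[\alpha]$ is not big.
\item Apply the smooth case to $\alpha+\epsilon\omega_Y+dd^c\phi_i$. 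By multilinearity of the non-pluripolar product and boundedness of $\chi$, adding or removing the $\epsilon\omega_Y$ and $\epsilon\omega$ terms changes each integral by at most $C\epsilon$.
\item Let $\epsilon\to 0$.
\end{itemize}

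Alternatively, your Fubini computation is local over $Y^\circ$, so you could avoid global approximants altogether. On small balls $B\Subset Y^\circ$, convolve a local potential of $\alpha+dd^c\phi$. Then use local Bedford--Taylor continuity for decreasing bounded psh functions on $\pi^{-1}(B)$; compactly supported test functions pull back to compactly supported ones because $\pi$ is proper. This gives the identity of measures $\pi_*((\pi^*\alpha+dd^c\pi^*\phi)^\kappa\wedge\omega_0^{n-\kappa})=\pi_*[\omega_0]^{n-\kappa}\cdot(\alpha+dd^c\phi)^\kappa$ on $Y^\circ$. Neither side charges the pluripolar complement, so the identity extends to all of $Y$, and the formula then holds for every bounded Borel $\chi$.
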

	\begin{proof}
		Note that when $\phi$ is smooth, the result follows immediately from a combination of Sard's theorem and Ehresmann's lemma. 
		Let us now establish the result in general.
		\par 
		Let $\omega_Y$ (resp. $\omega$) be a K\"ahler form on $Y$ (resp. $X$).
		We fix $\epsilon > 0$; then by Demailly regularization theorem, cf. \cite{DemRegul}, there is a decreasing sequence of smooth $\phi_i : Y \to \real$, $\phi_i \in \psh(Y, \alpha + \epsilon \omega_Y)$, converging pointwise towards $\phi$.
		Then by Theorem \ref{thm_cont_begz2}, since $\phi$ is bounded (and hence has minimal singularities), we have
		\begin{equation}\label{eq_proj_2}
		\begin{aligned}
			&
			\int_Y \chi \cdot (\alpha + \epsilon \omega_Y + dd^c \phi)^{\kappa}
			=
			\lim_{i \to \infty} 
			\int_Y \chi \cdot (\alpha + \epsilon \omega_Y + dd^c \phi_i)^{\kappa},
			\\
			& \int_X \pi^* \chi \cdot (\pi^* (\alpha + \epsilon \omega_Y) + \epsilon \omega + dd^c \pi^* \phi)^{\kappa} \wedge \omega_0^{n - \kappa}
			\\
			&
			\qquad \qquad \qquad \qquad \qquad \qquad 
			=
			\lim_{i \to \infty} 
			\int_X \pi^* \chi \cdot (\pi^* (\alpha + \epsilon \omega_Y) + \epsilon \omega + dd^c \pi^* \phi_i)^{\kappa} \wedge \omega_0^{n - \kappa}.
		\end{aligned}
		\end{equation}
		By the multilinearity of the non-pluripolar product and the boundedness of $\chi$, there is $C > 0$, so that for any $\epsilon > 0$, $\psi \in \psh(Y, \alpha)$, $\psi_0 \in \psh(Y, \alpha + \epsilon \omega_Y)$,  we have
		\begin{equation}\label{eq_proj_4}	
		\begin{aligned}
			&
			\Big|
			\int_Y \chi \cdot (\alpha + dd^c \phi)^{\kappa}
			-
			\int_Y \chi \cdot (\alpha + \epsilon \omega_Y + dd^c \phi)^{\kappa}
			\Big|
			\leq 
			C \epsilon,
			\\
			&
			\Big|
			\int_X \pi^* \chi \cdot (\pi^* (\alpha + \epsilon \omega_Y) + \epsilon \omega + dd^c \pi^* \psi)^{\kappa} \wedge \omega_0^{n - \kappa}
			\\
			&
			\qquad \qquad \qquad \qquad \qquad \qquad 
			-
			\int_X \pi^* \chi \cdot (\pi^* \alpha + dd^c \pi^* \psi)^{\kappa} \wedge \omega_0^{n - \kappa}
			\Big|
			\leq 
			C \epsilon,
			\\
			&
			\Big|
			\int_X \pi^* \chi \cdot (\pi^* (\alpha + \epsilon \omega_Y) + \epsilon \omega + dd^c \pi^* \psi_0)^{\kappa} \wedge \omega_0^{n - \kappa}
			\\
			&
			\qquad \qquad \qquad \qquad \qquad \qquad 
			-
			\int_X \pi^* \chi \cdot (\pi^* (\alpha + \epsilon \omega_Y) + dd^c \pi^* \psi_0)^{\kappa} \wedge \omega_0^{n - \kappa}
			\Big|
			\leq 
			C \epsilon.
		\end{aligned}
		\end{equation}
		By applying the aforementioned smooth version of Proposition \ref{prop_proj_fla}, we obtain 
		\begin{equation}\label{eq_proj_5}
			\int_X \pi^* \chi \cdot (\pi^* (\alpha + \epsilon \omega_Y) + dd^c \pi^* \phi_i)^{\kappa} \wedge \omega_0^{n - \kappa}
			=
			\pi_* [\omega_0]^{n - \kappa}
			\cdot
			\int_Y \chi \cdot (\alpha + \epsilon \omega_Y + dd^c \phi_i)^{\kappa}.
		\end{equation}
		A combination of (\ref{eq_proj_2}), (\ref{eq_proj_4}) and (\ref{eq_proj_5}), yields that there is $C > 0$, so that for any $\epsilon > 0$, we have
		\begin{equation}
			\Big|
			\int_X \pi^* \chi \cdot (\pi^* \alpha +  dd^c \pi^* \phi)^{\kappa} \wedge \omega_0^{n - \kappa}
			-
			\pi_* [\omega_0]^{n - \kappa}
			\cdot
			\int_Y \chi \cdot (\alpha + dd^c \phi)^{\kappa}
			\Big|
			\leq 
			C \epsilon.
		\end{equation}
		The proof is now finished, as $\epsilon$ can be chosen arbitrarily small.
	\end{proof}
	\begin{rem}\label{rem_zero_prod}
		a)
		Analogously, we can see that for any $i > \kappa$, we have
		\begin{equation}
			\int_X \pi^* \chi \cdot (\pi^* \alpha + dd^c \pi^* \phi)^{i} \wedge \omega_0^{n - i}
			=
			0.
		\end{equation}
		\par 
		b) The same proof yields that if the $(1, 1)$-forms $\alpha_i$ (resp. $\phi_i \in \psh(Y, \alpha_i)$), $i = 1, \ldots, \kappa$, verify the assumptions of $\alpha$ (resp. $\phi$) from Proposition \ref{prop_proj_fla}, then we have
		\begin{equation}
			\int_X \pi^* \chi \cdot \prod_{i = 1}^{\kappa} (\pi^* \alpha_i + dd^c \pi^* \phi_i) \wedge \omega_0^{n - \kappa}
			=
			\pi_*[\omega_0]^{n - \kappa}
			\cdot
			\int_Y \chi \cdot  \prod_{i = 1}^{\kappa} (\alpha_i + dd^c \phi_i).
		\end{equation}
	\end{rem}

	\subsection{Fujita-type approximation theorem and resolutions}\label{sect_fujita}
	
	The main goal of this section is to recall one of the principal techniques used throughout this paper, namely the Fujita-type approximation theorem.
	For birational linear series, this result was established by Lazarsfeld-Mustață \cite[Theorem D]{LazMus}.
	The extension to arbitrary linear series -- which is the version relevant for our purposes -- is due to Kaveh-Khovanskii \cite[Corollary 3.11]{KavehKhov}.
	\par 
	To state this result, we fix a linear series $W$ verifying (\ref{eq_lin_series_ndg}). 
	For any $m \in \nat$, we consider the subring $[\sym W_m] \subset W$ generated by $W_m$. 
	Let $[\sym^k W_m]$ be the $k$-th graded component of $[\sym W_m]$. 
	The approximation theorem then goes as follows.
	\begin{thm}\label{thm_sym_subalgebra}
		For any $\epsilon > 0$, there is $m_0 \in \nat$ such that for any $m \geq m_0$, there is $k_0 \in \nat$ such that for any $k \geq k_0$, we have
		\begin{equation}
			\frac{\dim [\sym^k W_m]}{\dim W_{km}}
			\geq 
			1 - \epsilon.
		\end{equation}
	\end{thm}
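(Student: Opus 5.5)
The plan is to follow the semigroup/Okounkov body approach of Kaveh-Khovanskii and reduce the statement to a lattice-point count in convex bodies. Fix a faithful valuation $\nu : K(X)^* \to \mathbb{Z}^n$ of rank $n$ with one-dimensional leaves. One example is the lexicographic order of vanishing along an admissible flag, after trivializing $L$ near the base point. Consider the graded semigroup
\[
S := \{ (\nu(s), k) : s \in W_k \setminus \{0\}, k \in \nat \} \subset \mathbb{Z}^n \times \nat .
\]
Since the leaves are one-dimensional, $\dim W_k = \# S_k$, where $S_k$ is the slice of $S$ in degree $k$. Since $\nu(s_1 s_2) = \nu(s_1) + \nu(s_2)$, sections of $W_m$ with prescribed valuations multiply to elements of $[\sym^k W_m]$ whose valuations exhaust the $k$-fold sumset $k \ast S_m := S_m + \cdots + S_m$. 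Distinct valuation values give linearly independent sections, so
\[
\dim [\sym^k W_m] \geq \# (k \ast S_m).
\]
It therefore suffices to show that $\#(k \ast S_m) / \# S_{km} \geq 1 - \epsilon$ for $m \geq m_0$ and $k \geq k_0(m)$.

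Next comes the convex geometry. Let $G \subset \mathbb{Z}^{n+1}$ be the group generated by $S$, let $C$ be the closed convex cone generated by $S$, and set $q := \dim C - 1$; this $q$ is the Kodaira-Iitaka dimension $\kappa(W)$. Let $\Delta := C \cap \{ \text{degree} = 1 \}$. I would use the following two facts.

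\begin{enumerate}[(i)]
\item By Khovanskii's theorem on finitely generated semigroups, applied to the semigroup generated by $S_m$ (placed in degree $1$ after rescaling), one has, for $k$ large,
\[
\#(k \ast S_m) = \frac{\mathrm{vol}_q \big( k\, \mathrm{conv}(S_m) \big)}{\mathrm{ind}(G_m)} \, (1 + o(1)).
\]
Here the volume is taken in the affine span, and $\mathrm{ind}(G_m)$ is the covolume of the group $G_m$ generated by the differences of elements of $S_m$. The reason is that $k$-fold sumsets fill all points of the lattice coset inside the cone, except those within bounded distance of its boundary.
\item Applying the same theorem to $S$ itself gives
\[
\# S_{km} = \frac{\mathrm{vol}_q (km \Delta)}{\mathrm{ind}(G \cap \{\deg = 0\})} \, (1 + o(1)).
\]
\end{enumerate}

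Taking the ratio of (i) and (ii), the required lower bound reduces to two approximation statements as $m \to \infty$:
\[
\mathrm{vol}_q \Big( \tfrac{1}{m} \mathrm{conv}(S_m) \Big) \to \mathrm{vol}_q(\Delta),
\qquad
G_m = G \cap \{\deg = 0\} \ \text{for } m \text{ large}.
\]
For the volume convergence, note that $\tfrac{1}{m} S_m \subset \Delta$. Moreover, for $m$ large, $\tfrac{1}{m} S_m$ contains $\tfrac{1}{m} (G \cap \text{interior points})$ at scale $m$, again by Khovanskii. Hence the convex hulls exhaust $\Delta$ up to an arbitrarily small volume loss.

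The main obstacle is the index statement, that is, showing that the differences of elements of the single slice $S_m$ generate the full degree-zero lattice of $G$ once $m$ is large. I would first pick finitely many generators $g_1, \ldots, g_r \in S$ of $G$ of degrees $d_j$, together with an element $a \in S$ lying in the interior of $C$. For $m$ large, each $g_j$ plus a suitable element of $S_{m - d_j}$ lies in $S_m$, and so does the corresponding element built with $a$; subtracting two such elements of $S_m$ recovers the degree-zero generators of $G$. Making this precise requires that $S_{m - d_j}$ contains a common ``deep'' element for all $j$ simultaneously. This again follows from the saturation part of Khovanskii's theorem, once one restricts to points at distance at least a fixed constant from $\partial C$. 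With the index equality and the volume convergence established, choosing $m_0$ first and then $k_0$ gives the claimed inequality.
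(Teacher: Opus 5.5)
Your route is the Kaveh--Khovanskii semigroup argument, and that is exactly what the paper relies on: it gives no proof of Theorem~\ref{thm_sym_subalgebra} and simply quotes \cite[Corollary 3.11]{KavehKhov}. The architecture is right. You use $\dim [\sym^k W_m] \geq \#(k \ast S_m)$, then Khovanskii's theorem for the finitely generated semigroup generated by $S_m$, and then two approximation statements: the volume of $\frac{1}{m}\operatorname{conv}(S_m)$ tends to $\mathrm{vol}_q(\Delta)$, and $G_m$ equals the degree-zero part $G_0$ of $G$.

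\textbf{The step that needs repair.} You apply Khovanskii's theorem to $S$ itself: in (ii), in the volume convergence, and in the index step. But $S$ is in general not finitely generated, and that is precisely the difficulty.

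In particular, the saturation statement you end with is false: it is not true that points of $G \cap C$ at distance at least a fixed constant from $\partial C$ lie in $S$. Here is a counterexample. On $\mathbb{P}^1$ with $L = \mathscr{O}(1)$, let $S_0 = \{0\}$ and $S_k := \{x \in \mathbb{Z} : \sqrt{k} \leq x \leq k - \sqrt{k}\}$. The monomials $z^x$ with $x \in S_k$ span a linear series, because $\sqrt{k} + \sqrt{l} \geq \sqrt{k+l}$. Its semigroup has $C = \{0 \leq x \leq k\}$ and $G = \mathbb{Z}^2$. Yet $(N,k) \notin S$ for $k > N^2$, even though $(N,k)$ stays at distance $N$ from $\partial C$.

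\textbf{The correct tool.} What you need is the Kaveh--Khovanskii approximation theorem. For every compact $K \subset \operatorname{relint} \Delta$ there is $m_K$ such that, for $m \geq m_K$, $S_m$ contains every point of the coset $G \cap \{\deg = m\}$ lying in $mK$.

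To prove it, pick a polytope $P$ with $K \subset \operatorname{relint} P \subset \operatorname{relint} \Delta$. Since $\operatorname{relint} C$ lies in the non-closed cone spanned by $S$, finitely many elements of $S$ span a cone containing the cone over $P$. Add finitely many elements of $S$ that generate $G$. Now apply Khovanskii's theorem to the resulting finitely generated $S' \subset S$. This works because points of degree $m$ over $K$ are at distance at least $c\,m$ from $\partial C(S')$.

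\textbf{How this fixes each step.} Choosing $K$ with $\mathrm{vol}_q(K)$ close to $\mathrm{vol}_q(\Delta)$ gives your volume convergence.

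Choosing $K$ to contain a ball of radius $r>0$ shows that $S_m$ contains all coset points in a ball of radius about $rm$. Hence differences of elements of $S_m$ contain any fixed finite generating set of $G_0$ once $m$ is large, so $G_m = G_0$ directly. This replaces your construction with the $g_j$ and $a$, which as written asks for a common element of the different slices $S_{m-d_j}$.

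Finally, (ii) needs only the upper bound $\#S_{km} \leq \#\big((G \cap \{\deg = km\}) \cap km\Delta\big)$. That is an elementary lattice-point count and requires no saturation at all.
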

	\par 
	We will also rely on the well-known observation, cf. \cite[Lemma 3.2]{ChangJow}, that the ring $[\sym W_m]$ coincides in high degrees with the section ring of an ample line bundle over a singular space. 
	To recall this, we follow the notations introduced in (\ref{eq_ki_maps}) for the singular space $Y_m$ and the rational map $\varphi_m$.
	We simultaneously resolve the indeterminacies of the Kodaira map, resolve the base ideal of $W_m$, and the singularities of $Y_m$, as shown in the diagram below
	\begin{equation}\label{eq_blow_up_kod}
		\begin{tikzcd}
		\widehat{X}_m \arrow[d, "\pi_m"] \arrow[dr, "f_m"] \arrow[r, "\widehat{\varphi}_m"]  & \widehat{Y}_m \arrow[d, "p_m"] & \\
X \arrow[r, dashed, "\varphi_m"] & Y_m \arrow[r, hook, "\iota_m"] & \mathbb{P}(W_m^*).
		\end{tikzcd}
	\end{equation}
	We denote by $A_m$ the restriction of the hyperplane bundle $\mathscr{O}(1)$ over $\mathbb{P}(W_m^*)$ to $Y_m$, and let $\widehat{L} := \pi_m^* L$.
	The resolution of the base ideal of $W_m$ yields an effective divisor $E_m$ on $\widehat{X}_m$, and the evaluation maps ${\rm{ev}}_x : W_m \to L_x^{\otimes m}$, $x \in X$, yield an isomorphism
	\begin{equation}\label{eq_iso_line_resol}
		\widehat{L}^{\otimes m} \simeq f_m^* A_m \otimes \mathscr{O}(E_m).
	\end{equation}
	\par 
	\begin{sloppypar}
	Let us now recall that for sufficiently large $k \in \nat$, there is an isomorphism 
	\begin{equation}\label{eq_sym_yk_rel}
		[\sym^k W_m]
		\simeq
		H^0(Y_m, A_m^{\otimes k}).
	\end{equation}
	First note that the restriction morphism from $\mathbb{P}(H^0(X, W_m)^*)$ to $Y_m$ induces a map 
	\begin{equation}\label{eq_restr_morph}
		H^0(\mathbb{P}(W_m^*), \mathscr{O}(k)) \to H^0(Y_m, A_m^{\otimes k}).
	\end{equation}
	Using the standard identification  $H^0(\mathbb{P}(W_m^*), \mathscr{O}(k)) \simeq \sym^k W_m$ and the definition of $Y_m$, this map factors through the evaluation map on $\sym^k W_m$, which gives precisely the map (\ref{eq_sym_yk_rel}). 
	The surjectivity of this map for $k$ sufficiently large follows from the surjectivity of the restriction morphism (\ref{eq_restr_morph}), implied by the Serre vanishing theorem.
	The injectivity follows from the fact that the sections $s_1, s_2 \in W_{km}$ coincide up to a constant when they have the same vanishing divisor on $X$, and the latter condition can be tested by comparing the evaluations on $s_1, s_2$, which geometrically corresponds to restricting $s_1, s_2$ along $Y_m$.
	\end{sloppypar}
	\par 
	Let us now collect a number of results which will explain the relation between various cohomology groups involved in (\ref{eq_blow_up_kod}).
	\par 
	By the Zariski main theorem, \cite[Corollary III.11.4]{HartsBook}, we have the isomorphism
	\begin{equation}\label{eq_isom_resol}
		\pi_m^* : H^0(X, L^{\otimes k}) \simeq H^0(\widehat{X}_m, \widehat{L}^{\otimes k}).
	\end{equation}
	We denote $\widehat{A}_m := p_m^* A_m$.
	Then we have natural inclusions
	\begin{equation}\label{eq_incl_resol_coh}
		H^0(Y_m, A_m^{\otimes k}) 
		\overset{p_m^*}{\hookrightarrow} 
		H^0(\widehat{Y}_m, \widehat{A}_m^{\otimes k}) 
		\overset{\widehat{\varphi}_m^*}{\hookrightarrow} 
		H^0(\widehat{X}_m, \widehat{L}^{\otimes k m}).
	\end{equation}
	While the first inclusion (\ref{eq_incl_resol_coh}) is not necessarily an isomorphism (if $Y_m$ is not normal), by the ampleness of $A_m$, it has a small codimension.
	More precisely, by \cite[proof of Lemma 2.2.3]{LazarBookI}, for any $\epsilon > 0$, there is $k_0 \in \nat$ such that for any $k \geq k_0$,
	\begin{equation}\label{eq_h0ym_yhatm}
		\frac{\dim H^0(Y_m, A_m^{\otimes k})}{\dim H^0(\widehat{Y}_m, \widehat{A}_m^{\otimes k})}
		\geq 
		1 - \epsilon.
	\end{equation}
	Let us introduce the following coherent sheaf $\mathcal{J}_m := \pi_* \mathscr{O}(\widehat{Y}_m)$.
	By the projection formula, cf. \cite[p. 140]{LazarBookI}, we then have
	\begin{equation}\label{eq_isom_sheaf}
		H^0(\widehat{Y}_m, \widehat{A}_m^{\otimes k})
		\simeq
		H^0(Y_m, A_m^{\otimes k} \otimes \mathcal{J}_m).
	\end{equation}
	
	\subsection{Fubini-Study maps and intersection theory}\label{sect_fs_inter}
	
	The main goal of this section is to establish the analytic formulas for the Kodaira-Iitaka dimension and the multiplicity, i.e. to prove Theorems \ref{thm_ki_form} and \ref{thm_vol_form}.
	\par 
	To do so, we first give a geometric interpretation of the Fubini-Study (singular) metric associated with a norm.
	We fix a norm $N_m$ on $W_m$ and induce from it the continuous metric on $\mathscr{O}(1)$ over $\mathbb{P}(W_m^*)$.
	We denote by $h^A_m$ the induced continuous metric on $A_m$, and by $\widehat{h}^A_m$ the pull-back of it to $\widehat{A}_m$.
	It is immediate to verify, cf. \cite[(3.10)]{FinGQBig}, that for the singular metric $FS(N_m)$ defined in (\ref{eq_fs_norm}), under the isomorphism (\ref{eq_iso_line_resol}), we have
	\begin{equation}\label{eq_fs_n_k_a_k_rel}
		\pi_m^* FS(N_m)
		=
		\varphi_m^*
		\widehat{h}^A_m
		\cdot
		h^{E_m}_{{\rm{sing}}}.
	\end{equation}
	Moreover, it is classical that the metric $\widehat{h}^A_m$ has a psh potential, cf. \cite[Theorem 4.1]{KobFinNeg}.
	\par 
	We will also need the following general result concerning Fubini-Study singular metrics.
	Recall that a graded norm $N = (N_k)_{k = 0}^{\infty}$, $N_k := \| \cdot \|_k$, over a linear series $W = \oplus_{k = 0}^{\infty} W_k \subset R(X, L)$ is called \textit{submultiplicative} if for any $k, l \in \nat^*$, $f \in W_k$, $g \in W_l$, we have
	\begin{equation}\label{eq_subm_s_ring}
		\| f \cdot g \|_{k + l} \leq 
		\| f \|_k \cdot
		\| g \|_l.
	\end{equation}
	\begin{lem}\label{lem_fs_sm}
		The sequence of the Fubini-Study (singular) metrics $FS(N_k)$, $k \in \nat^*$, is submultiplicative for any submultiplicative graded norm $N = (N_k)_{k = 0}^{\infty}$, i.e. for any $k, l \in \nat$, we have $FS(N_{k + l}) \leq FS(N_k) \cdot FS(N_l)$.
	\end{lem}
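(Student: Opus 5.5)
The plan is to compare the two metrics pointwise, directly from the definition (\ref{eq_fs_norm}). Fix $x \in X$ and nonzero vectors $l_1 \in L^{\otimes k}_x$, $l_2 \in L^{\otimes l}_x$. We may assume both $|l_1|_{FS(N_k), x}$ and $|l_2|_{FS(N_l), x}$ are finite; otherwise the right-hand side is $+\infty$ and there is nothing to prove. Since every nonzero element of $L^{\otimes (k+l)}_x$ can be written as such a product $l_1 \otimes l_2$, it suffices to show
\begin{equation*}
	|l_1 \otimes l_2|_{FS(N_{k+l}), x} \leq |l_1|_{FS(N_k), x} \cdot |l_2|_{FS(N_l), x}.
\end{equation*}
Here I read the infima in (\ref{eq_fs_norm}) as taken over $s \in W_k$, in accordance with the stated convention.

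The argument is an $\varepsilon$-approximation. Fix $\varepsilon > 0$ and choose $f \in W_k$ with $f(x) = l_1$ and $\|f\|_k \leq |l_1|_{FS(N_k),x} + \varepsilon$. Likewise choose $g \in W_l$ with $g(x) = l_2$ and $\|g\|_l \leq |l_2|_{FS(N_l),x} + \varepsilon$. Because $W$ is a subring of $R(X,L)$, the product $f \cdot g$ lies in $W_{k+l}$, and $(f\cdot g)(x) = l_1 \otimes l_2$. The product is therefore an admissible competitor in the infimum defining $FS(N_{k+l})$ at $x$. Submultiplicativity (\ref{eq_subm_s_ring}) then gives
\begin{equation*}
	|l_1 \otimes l_2|_{FS(N_{k+l}),x} \leq \|f \cdot g\|_{k+l} \leq \|f\|_k \|g\|_l \leq \big(|l_1|_{FS(N_k),x}+\varepsilon\big)\big(|l_2|_{FS(N_l),x}+\varepsilon\big).
\end{equation*}
Letting $\varepsilon \to 0$ yields the claim.

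There is no genuine obstacle here; only two bookkeeping points need care. The first is the convention with $+\infty$: if $x$ lies in the base locus of $W_k$ or of $W_l$, the inequality is trivial. The second is the identification of metrics on $L^{\otimes(k+l)}$ with products of metrics on $L^{\otimes k}$ and $L^{\otimes l}$. This identification is valid because $L_x^{\otimes k} \otimes L_x^{\otimes l} = L_x^{\otimes(k+l)}$ and all fibres are one-dimensional, so the inequality of metrics is equivalent to the inequality for one nonzero vector in each fibre.
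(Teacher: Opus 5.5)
Your proof is correct and is the direct verification the paper leaves implicit: the paper states this lemma without proof, as an immediate consequence of (\ref{eq_fs_norm}) and (\ref{eq_subm_s_ring}), namely that products of near-optimal sections are admissible competitors because $W$ is a graded ring. The one detail worth adding is that $|l_2|_{FS(N_l),x} > 0$ for $l_2 \neq 0$, since a norm on the finite-dimensional $W_l$ has positive infimum on the closed affine subspace $\{s \in W_l : s(x) = l_2\}$, which avoids $0$; hence the case with an infinite factor never produces an ambiguous $0 \cdot \infty$.
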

	\par 
	Note also that the Fubini-Study operator is monotone in the sense that if $N_k^0$ and $N_k^1$ are two norms defined on $W_k \subset H^0(X, L^{\otimes k})$, verifying $N_k^0 \geq N_k^1$, then we have
	\begin{equation}\label{eq_fs_monoton}
		FS(N_k^0) \geq FS(N_k^1).
	\end{equation}
	\par 
	It is also immediate that the Fubini-Study operator is monotone with respect to taking subspaces.
	More precisely, if we have an embedding $\iota: W_k^0 \subset W_k$, then for any norm $N_k$ on $W_k$,
	\begin{equation}\label{eq_fs_monoton2}
		FS(N_k) \leq FS(\iota^* N_k).
	\end{equation}
	\par 
	Let us now discuss how the Fubini-Study singular metrics depend on some of the modifications of the linear series that we introduced in Section \ref{sect_exampl}.
	\par 
	Let $\pi : X \to Y$ be a holomorphic map between two complex projective manifolds and $L_Y$ be a line bundle on $Y$.
	As described in Section \ref{sect_exampl}, any linear series $W$ on $(Y, L_Y)$ induces a linear series $\pi^* W$ on $R(X, \pi^* L_Y)$ by the pull-back.
	For any norm $N_k$ on $W_k \subset H^0(Y, L_Y^{\otimes k})$, we denote by $\pi^* N_k$ the induced norm on $\pi^* W_k$, given by the quotient map construction (\ref{eq_defn_quot_norm}) and the surjective map $\pi^* : W_k \to \pi^* W_k$.
	Immediate verification shows the following relation between the respective Fubini-Study singular metrics
	\begin{equation}\label{eq_fs_pullback}
		FS(\pi^* N_k)
		=
		\pi^* FS(N_k).
	\end{equation}	 
	\par 
	\begin{sloppypar}
	Let us now consider an arbitrary effective divisor $D$ on $X$, and a vector subspace $W_k \subset H^0(X, L^{\otimes k})$.
	Consider the vector subspace $W_k^D \subset H^0(X, L^{\otimes k} \otimes \mathscr{O}(D))$ given by a multiplication of $W_k$ by the canonical holomorphic section of $\mathscr{O}(D)$.
	Fix a norm on $N_k^D$ on $W_k^D$, and denote by $N_k$ the associated norm on $W_k$.
	There is the following relation between the respective Fubini-Study singular metrics
	\begin{equation}\label{eq_fs_nkd}
		FS(N_k^D)
		=
		FS(N_k)
		\cdot
		h^{D}_{{\rm{sing}}}.
	\end{equation}
	\end{sloppypar}
	\par 
	\begin{proof}[Proof of Theorems \ref{thm_ki_form} and \ref{thm_vol_form}]
		First of all, by Lemma \ref{lem_fs_sm}, the sequence of singular metrics
		\begin{equation}
			h^L_{2^k}
			:=
			FS({\textrm{Ban}}_{2^k}^{\infty}[W](h^L))^{\frac{1}{2^k}}
		\end{equation}
		is non-increasing.
		From now on, for brevity, we denote $m := 2^k$.
		By Theorem \ref{thm_cont_begz}, for any $i \in \nat$, we obtain the following convergence
		\begin{equation}
			\int_X c_1(L, P[W, h^L])^{i} \wedge \omega^{n - i}
			=
			\lim_{m \to \infty} \int_X c_1(L, h^L_m)^{i} \wedge \omega^{n - i}.
		\end{equation}
		We denote by $\widehat{h}^A_m$ the metric on $\widehat{A}_{m}$ induced by ${\textrm{Ban}}_{m}^{\infty}[W](h^L)$ as described before (\ref{eq_fs_n_k_a_k_rel}).
		By (\ref{eq_fs_n_k_a_k_rel}), we have
		\begin{equation}
			m \cdot \pi_m^* c_1(L, h^L_m)
			=
			\widehat{\varphi}_{m}^* c_1(\widehat{A}_{m}, \widehat{h}^A_m)
			+
			[E_m],
		\end{equation}
		where $[E_m]$ is the current of integration along $E_m$.
		From this and the fact that the non-pluripolar product doesn't charge the analytic subsets (as $E_m$), we deduce 
		\begin{equation}\label{eq_form_pullback}
			m^i \cdot \pi_m^* c_1(L, h^L_m)^{i} \wedge \pi_m^* \omega^{n - i}
			=
			\widehat{\varphi}_m^* c_1(\widehat{A}_m, \widehat{h}^A_m)^{i} \wedge \pi_m^* \omega^{n - i}.
		\end{equation}
		Recall that by (\ref{eq_dim_ki_yk}), we have $\dim \widehat{Y}_m = \kappa(W)$, for $m$ large enough.
		From this, Remark \ref{rem_zero_prod} and (\ref{eq_form_pullback}), we deduce immediately that for $i > \kappa(W)$, we have
		\begin{equation}\label{eq_i_big_zero}
			\int_X c_1(L, h^L_m)^{i} \wedge \omega^{n - i}
			=
			0.
		\end{equation}
		\par 
		Moreover, by Proposition \ref{prop_proj_fla} and (\ref{eq_form_pullback}), we have
		\begin{equation}\label{eq_int_l_a_id}
			m^{\kappa(W)} \cdot \int_X c_1(L, h^L_m)^{\kappa(W)} \wedge \omega^{n - \kappa(W)}
			=
			\widehat{\varphi}_{*}[ \pi_m^* \omega]^{n - \kappa(W)}
			\cdot
			\int_{\widehat{Y}_m} c_1(\widehat{A}_m, \widehat{h}^A_m)^{\kappa(W)},
		\end{equation}
		where $\widehat{\varphi}_{*}[ \pi_m^* \omega]^{n - \kappa(W)}$ is defined analogously to the quantity appearing in (\ref{eq_vol_form}).
		\par 
		Since $p_m$ and $\pi_m$ are birational modifications, we deduce that for $m$ large enough
		\begin{equation}\label{eq_bir_fib_integg}
			\widehat{\varphi}_{*}[ \pi_m^* \omega]^{n - \kappa(W)}
			=
			\varphi_{*} [\omega]^{n - \kappa(W)}.
		\end{equation}
		\par 
		Also, since $\widehat{h}^A_m$ is a continuous metric with a psh potential on $\widehat{A}_m$, we have
		\begin{equation}\label{eq_y_m_int_a_vol}
			\int_{\widehat{Y}_m} c_1(\widehat{A}_m, \widehat{h}^A_m)^{\kappa(W)}
			=
			{\rm{vol}}(\widehat{A}_m),
		\end{equation}
		by a version of (\ref{eq_vol_bouck}).
		Note, however, that in the current semi-ample setting the corresponding result follows immediately from Theorem \ref{thm_wn_monot} and the Hilbert-Samuel theorem.
		\par 
		Also, by Theorem \ref{thm_sym_subalgebra}, (\ref{eq_sym_yk_rel}) and (\ref{eq_incl_resol_coh}), we deduce that for any $\epsilon > 0$, there is $m_0 \in \nat$, so that for any $m \geq m_0$, we have
		\begin{equation}\label{eq_bol_w_a_est}
			(1 + \epsilon) 
			\cdot 
			m^{\kappa(W)}
			\cdot
			{\rm{vol}}_{\kappa}(W)
			\geq
			{\rm{vol}}(\widehat{A}_m)
			\geq
			(1 - \epsilon) 
			\cdot 
			m^{\kappa(W)}
			\cdot
			{\rm{vol}}_{\kappa}(W).
		\end{equation}
		By combining (\ref{eq_form_pullback}), (\ref{eq_int_l_a_id}), (\ref{eq_bir_fib_integg}), (\ref{eq_y_m_int_a_vol}) and (\ref{eq_bol_w_a_est}), we establish immediately Theorem \ref{thm_vol_form}.
		\par 
		Since ${\rm{vol}}_{\kappa}(W) > 0$, by Theorem \ref{thm_vol_form}, we deduce that
		\begin{equation}
			\int_X c_1(L, P[W, h^L])^{\kappa(W)} \wedge \omega^{n - \kappa(W)} > 0.
		\end{equation}
		This along with (\ref{eq_i_big_zero}) finishes the proof of Theorem \ref{thm_ki_form}.
	\end{proof}
	
	We conclude this section with the following remark. 
	At first glance, the condition (\ref{eq_kappa_w}) appears to depend on the choice of the Kähler form $\omega$ and the metric $h^L$. 
	In fact, this is not the case. 
	While this follows from Theorem \ref{thm_ki_form}, it can also be verified directly.
	\par 
	The independence with respect to $\omega$ follows from the observation that any two Kähler metrics bound each other from above and below up to a positive multiplicative constant.
	\par
	For the independence with respect to $h^L$, observe that the measure in (\ref{eq_meas_x_wedge}) is nonnegative, so its vanishing is equivalent to the vanishing of its integral. 
	The monotonicity formula, see Theorem \ref{thm_wn_monot}, then shows that this integral depends only on the singularity type of $P[W, h^L]$. 
	Since the singularity type of $P[W, h^L]$ is independent of the choice of $h^L$, the claim follows.

	\section{Comparison of the volumes of unit balls}\label{sect_vol_balls_compar}
	The main goal of this section is to relate the ratio of volumes of unit balls associated with sup-norms with the relative Monge-Ampère $\kappa$-energy.
	For this, in Section \ref{sect_alt_q_sch}, we provide an alternative quantization scheme for the envelope associated with the linear series.
	In Section \ref{sect_vol_balls}, building on this, the main result from Berman-Boucksom \cite{BermanBouckBalls} and Fujita type approximation theorem, Theorem \ref{thm_sym_subalgebra}, we establish Theorem \ref{thm_volumes_ratio}.
	
	\subsection{An alternative quantization scheme for the envelope}\label{sect_alt_q_sch}
	The main goal of this section is to describe how the sup-norms behave with respect to the identifications of the cohomology groups appearing in connection with the rational maps associated with the linear series, most notably (\ref{eq_sym_yk_rel}), (\ref{eq_isom_resol}) and (\ref{eq_incl_resol_coh}).
	Trough this, we present an alternative quantization scheme for the envelope (\ref{eq_env_w}).
	\par 
	We first begin with the description of the norms, as it will motivate the second part of the section. 
	Throughout the whole section, we use the notations introduced in (\ref{eq_blow_up_kod}).
	\par 
	We fix a continuous metric $h^L$ on $L$, a non-pluripolar subset $K \subset X$, and consider the induced sup-norm $\textrm{Ban}_k^{\infty}(K, h^L)$ on $H^0(X, L^{\otimes k})$.
	We denote by $\widehat{h}^L$ the pulled-back metric on $\widehat{L}$, and by $\widehat{K} := \pi_m^{-1}(K)$.
	Then under (\ref{eq_isom_resol}), we immediately have
	\begin{equation}
		\pi_m^* \textrm{Ban}_k^{\infty}(K, h^L)
		=
		\textrm{Ban}_k^{\infty}(\widehat{K}, \widehat{h}^L).
	\end{equation}
	\par 
	To describe the norm induced by (\ref{eq_incl_resol_coh}) on $H^0(\widehat{Y}_m, \widehat{A}_m^{\otimes k})$, we need to introduce a certain semimetric on $\widehat{A}_m$.
	For this, we note that the isomorphism (\ref{eq_iso_line_resol}) recasts as 
	\begin{equation}\label{eq_iso_line_resol2}
		\widehat{L}^{\otimes m} \simeq \widehat{\varphi}_m^* \widehat{A}_m \otimes \mathscr{O}(E_m).
	\end{equation}
	Using the isomorphism (\ref{eq_iso_line_resol2}), we define the semimetric $\widehat{\varphi}_{m, *} (\widehat{K}, \widehat{h}^L)$ on $\widehat{A}_m$ as follows: for any $y \in \widehat{Y}_m$, $l \in \widehat{A}_{m, y}$, we let 
	\begin{equation}\label{eq_max_metric}
		|l|_{\widehat{\varphi}_{m, *} (\widehat{K}, \widehat{h}^L)} 
		:=
		\begin{cases}
			\sup_{\substack{x \in \widehat{K} \\ \widehat{\varphi}_m(x) = y}} \big| s_{E_m}(x) \cdot \widehat{\varphi}_m^* l \big|_{(\widehat{h}^L_x)^k},
			&
			\text{ if } y \in \widehat{\varphi}_m(K),
			\\
			0, &
			\text{ otherwise.} 
		\end{cases}		
	\end{equation}
	where $s_{E_m}$ is the canonical section of the divisor $E_m$, defined as in the notation section.
	\par 
	Note that choosing a resolution different from $\widehat{Y}_m$ does not significantly alter the above semimetric: if one considers a resolution dominating the two given ones, then the pullbacks of the associated semimetrics agree.
	We need the following partial regularity result on $\widehat{\varphi}_{m, *} (\widehat{K}, \widehat{h}^L)$.
	\begin{lem}\label{lem_bnd_below_mettr}
		The semimetric $\widehat{\varphi}_{m, *} (\widehat{K}, \widehat{h}^L)$ is upper semicontinuous.
		Moreover, there is a non-pluripolar subset $E$ in $\widehat{Y}_m$, and a continuous metric $\widehat{h}^A_0$ on $\widehat{A}_m$ over $\widehat{Y}_m$ so that over $E$, we have $\widehat{\varphi}_{m, *} (\widehat{K}, \widehat{h}^L) \geq \widehat{h}^A_0$.
	\end{lem}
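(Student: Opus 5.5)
We prove the two assertions separately. Upper semicontinuity is a compactness argument. The lower bound comes from restricting to a Zariski open set where everything is an isomorphism and then using a non-pluripolar piece of $K$ there.

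\emph{Upper semicontinuity.} Work locally over a small open set $U \subset \widehat{Y}_m$ on which $\widehat{A}_m$ is trivialized by a frame $e$. Define
\begin{equation*}
	F(x) := \big| s_{E_m}(x) \cdot \widehat{\varphi}_m^* e(x) \big|_{(\widehat{h}^L_x)^k},
\end{equation*}
which is a continuous nonnegative function on $\widehat{\varphi}_m^{-1}(U)$, since $\widehat{h}^L$ is continuous and $s_{E_m}$ is holomorphic. Then
\begin{equation*}
	|e(y)|_{\widehat{\varphi}_{m, *} (\widehat{K}, \widehat{h}^L)} = \sup \big\{ F(x) : x \in \widehat{K} \cap \widehat{\varphi}_m^{-1}(y) \big\},
\end{equation*}
with the convention $\sup \emptyset = 0$.

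The set $\widehat{K} = \pi_m^{-1}(K)$ is compact and $\widehat{\varphi}_m$ is proper. So the standard argument applies. Take $y_j \to y$ and $x_j \in \widehat{K}$ with $\widehat{\varphi}_m(x_j) = y_j$ nearly realizing the sup. After passing to a subsequence, $x_j \to x \in \widehat{K}$ with $\widehat{\varphi}_m(x) = y$, and continuity of $F$ gives $\limsup_j |e(y_j)| \leq F(x) \leq |e(y)|$. If no such $x_j$ exist for $j$ large, the values are $0$ and the inequality is trivial. Hence the norm of a local frame is upper semicontinuous.

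\emph{Lower bound.} Choose a Zariski open dense $U_0 \subset \widehat{X}_m$ with three properties: $U_0 \cap E_m = \emptyset$, $\pi_m$ is an isomorphism on $U_0$, and $\widehat{\varphi}_m|_{U_0}$ is a submersion onto a Zariski open $V_0 \subset \widehat{Y}_m$ (possible by generic smoothness). Since $K$ is non-pluripolar and the pluripolar set $X \setminus \pi_m(U_0)$ is analytic, $K \cap \pi_m(U_0)$ is non-pluripolar. By inner regularity of capacity we pick a compact non-pluripolar $K' \subset \widehat{K} \cap U_0$.

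Next we shrink $K'$ into a chart. Cover $U_0$ by countably many charts where $\widehat{\varphi}_m$ is a coordinate projection $(z', z'') \mapsto z'$. A countable union of pluripolar sets is pluripolar, so some compact piece $K'' \subset K'$ lying in one such chart is non-pluripolar.

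We then set $E := \widehat{\varphi}_m(K'')$ and argue that it is non-pluripolar in $\widehat{Y}_m$. If $E \subset \{u = -\infty\}$ for a psh $u$ near $E$, then $K'' \subset \{u \circ \widehat{\varphi}_m = -\infty\}$, and $u \circ \widehat{\varphi}_m$ is psh since it is the pullback of a psh function by a holomorphic map. This would make $K''$ pluripolar, a contradiction. The global version of this statement needs quasi-psh functions, using Josefson's theorem to pass between local and global pluripolarity.

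On $\widehat{\varphi}_m^{-1}(V_0) \cap U_0$ the function $F$ is continuous and strictly positive, because $s_{E_m}$ does not vanish there. Take any continuous metric $\widehat{h}^A_1$ on $\widehat{A}_m$, and choose $x(y) \in K''$ with $\widehat{\varphi}_m(x(y)) = y$ for each $y \in E$. Then for $y \in E$,
\begin{equation*}
	|l|_{\widehat{\varphi}_{m, *}} \geq \big|s_{E_m}(x(y)) \cdot \widehat{\varphi}_m^* l\big| \geq c \, |l|_{\widehat{h}^A_1},
\end{equation*}
where
\begin{equation*}
	c := \min_{x \in K''} \frac{|s_{E_m} \widehat{\varphi}_m^* l|}{|l|_{\widehat{h}^A_1}} > 0.
\end{equation*}
This ratio is independent of $l$ and continuous and positive in $x$, so it attains a positive minimum on the compact set $K''$. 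Setting $\widehat{h}^A_0 := c \cdot \widehat{h}^A_1$ gives $\widehat{\varphi}_{m, *} (\widehat{K}, \widehat{h}^L) \geq \widehat{h}^A_0$ over $E$.

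The main obstacle is the non-pluripolarity of the image $E$. The local pullback argument is clear, but globalizing it requires Josefson's theorem, which identifies local and global pluripolarity.
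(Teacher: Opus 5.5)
Your proof is correct. The upper semicontinuity part is the same properness/compactness argument the paper invokes, but you reach the lower bound by a different route.

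The paper fixes an arbitrary continuous metric $\widehat{h}^A_0$ and lets $E^0_l$ be the set where $\widehat{\varphi}_{m,*}(\widehat{K},\widehat{h}^L)\geq \frac{1}{l}\widehat{h}^A_0$. It then argues by contradiction. If every $E^0_l$ were pluripolar, so would be $\bigcup_l E^0_l$ and its preimage under $\widehat{\varphi}_m$. But $\widehat{K}\subset \widehat{\varphi}_m^{-1}(\bigcup_l E^0_l)\cup E_m$, since the semimetric is nonzero at $\widehat{\varphi}_m(x)$ for every $x\in\widehat{K}\setminus E_m$.

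You instead construct $E=\widehat{\varphi}_m(K'')$ explicitly from a compact non-pluripolar $K''\subset\widehat{K}\setminus E_m$, and you extract the constant by compactness. Both proofs rest on the same fact: preimages of pluripolar sets under the surjective map $\widehat{\varphi}_m$ are pluripolar. The paper's countable exhaustion by the $E^0_l$ replaces your compactness step, which is why its argument is only a few lines and needs no charts. Your version has the small advantage of giving a compact $E$ and an explicit constant.

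Your reduction to the submersion locus and to coordinate charts is needed only because you run the pullback argument with local psh functions. With a global quasi-psh $u$ on $\widehat{Y}_m$ (via Josefson), $u\circ\widehat{\varphi}_m\not\equiv-\infty$ follows from the surjectivity of $\widehat{\varphi}_m$ alone. In that form, only $K''\cap E_m=\emptyset$ and the non-pluripolarity of $K''$ matter.
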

	\begin{proof}
		The upper semicontinuity follows immediately from the fact that $\widehat{\varphi}_{m}$ is a proper map and the semimetrics $\widehat{h}^L$ and $h^{E_m}_{{\rm{sing}}}$ are continuous.
		We now prove the second claim.
		Fix an arbitrary continuous metric $\widehat{h}^A_0$ on $\widehat{A}_m$ over $\widehat{Y}_m$, and consider the sequence of subsets $E^0_l \subset \widehat{Y}_m$, $l \in \nat^*$, defined by the condition that $\widehat{\varphi}_{m, *} (\widehat{K}, \widehat{h}^L) \geq \frac{1}{l} \widehat{h}^A_0$ over $E^0_l$.
		It suffices to prove that there exists $l \in \nat^*$ such that $E^0_l$ is non-pluripolar.
		Suppose, for the sake of contradiction, that this is not the case.
		Then the union $E := \cup E^0_l$ would be pluripolar, and consequently the set $\widehat{\varphi}_m^{-1}(E)$ would also be pluripolar.
		However, by (\ref{eq_max_metric}), we have $\widehat{K} \subset \widehat{\varphi}_m^{-1}(E) \cup E_m$, which leads to a contradiction, since $K$ (and hence $\widehat{K}$) is assumed to be non-pluripolar.
	\end{proof}
	\par 
	The reason why the above semimetric plays a crucial role in the current study is because immediately from the construction, we see that under the inclusion (\ref{eq_incl_resol_coh}), we have
	\begin{equation}\label{eq_kod_pullback_sup}
		 \widehat{\varphi}_m^* \textrm{Ban}_{km}^{\infty}(\widehat{K}, \widehat{h}^L)
		 =
		 \textrm{Ban}_{k}^{\infty}(\widehat{\varphi}_{m, *} (\widehat{K}, \widehat{h}^L)),
	\end{equation}
	where $\textrm{Ban}_{k}^{\infty}(\widehat{\varphi}_{m, *} (\widehat{K}, \widehat{h}^L))$ denotes the sup-norm on $H^0(\widehat{Y}_m, \widehat{A}_m^{\otimes k})$ induced by $\widehat{\varphi}_{m, *} (\widehat{K}, \widehat{h}^L)$.
	\par 
	We shall consider the envelope $P[\widehat{\varphi}_{m, *} (\widehat{K}, \widehat{h}^L)]$, which is a singular metric on $\widehat{A}_m$, defined as in (\ref{eq_sic_env}).
	By Lemmas \ref{lem_env_sing_vers} and \ref{lem_bnd_below_mettr}, we conclude that $P[\widehat{\varphi}_{m, *} (\widehat{K}, \widehat{h}^L)]_*$ has a psh potential.
	\par 
	Consider the following singular metric on $\hat{L}$:
	\begin{equation}\label{eq_hlm_defn}
		P[W_m, K, h^L] := \big( \widehat{\varphi}_m^* P[\widehat{\varphi}_{m, *} (\widehat{K}, \widehat{h}^L)]_* \cdot h^{E_m}_{\rm{sing}}\big)^{\frac{1}{m}}.
	\end{equation}
	Then $P[W_m, K, h^L]$ has a psh potential.
	Also, since $\pi_m$ is a birational modification, singular metrics with psh potentials on $\widehat{L}$ correspond to the pullbacks of singular metrics with psh potentials on $L$. 
	Hence $P[W_m, K, h^L]$ can be viewed as a singular metric on $L$, which we denote by the same symbol by a slight abuse of notation.
	\begin{lem}\label{lem_decr_quant_sch}
		The sequence of singular metrics $P[W_m, K, h^L]$ on $L$ decreases over multiplicative subsequences $m$, as $m = 2^k$, $k \in \nat$.
	\end{lem}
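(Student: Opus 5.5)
It suffices to show $P[W_{2m}, K, h^L] \leq P[W_m, K, h^L]$ for $m$ a power of $2$. Both sides are singular metrics on $L$ with psh potentials, so we will compare them on a common resolution. Choose $\widehat{X}$ dominating both $\widehat{X}_m$ and $\widehat{X}_{2m}$, and resolutions $\widehat{Y}_m$, $\widehat{Y}_{2m}$. Since the semimetric (\ref{eq_max_metric}) and the envelope are compatible with passing to a dominating resolution (remark after (\ref{eq_max_metric})), we may pull everything back to $\widehat{X}$ and work there.

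\textbf{Step 1: relate the two Kodaira maps.} The multiplication map $\sym^2 W_m \to W_{2m}$ gives, as in (\ref{eq_lin_ser_emb}), a rational map $\psi: \mathbb{P}(W_{2m}^*) \dashrightarrow \mathbb{P}(\sym^2 W_m^*)$. Composing it with the Veronese embedding shows that $\varphi_m$ factors through $\varphi_{2m}$. After further blowing up $\widehat{Y}_{2m}$, we get a morphism $q : \widehat{Y}_{2m} \to \widehat{Y}_m$ with $\widehat{\varphi}_m = q \circ \widehat{\varphi}_{2m}$. Correspondingly, we obtain an effective divisor $F$ on $\widehat{Y}_{2m}$ (the base divisor of $q^*\widehat{A}_m^{\otimes 2}$ as a subsystem of $\widehat{A}_{2m}$) with
\begin{equation*}
\widehat{A}_{2m} \simeq q^* \widehat{A}_m^{\otimes 2} \otimes \mathscr{O}(-F), \qquad 2E_m = E_{2m} + \widehat{\varphi}_{2m}^* F .
\end{equation*}
This follows by comparing (\ref{eq_iso_line_resol2}) for $m$ and $2m$ through the inclusion $W_m^2 \subset W_{2m}$.

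\textbf{Step 2: compare the envelopes.} Let $\phi$ be a psh-potential metric $h_0 = P[\widehat{\varphi}_{m,*}(\widehat{K}, \widehat{h}^L)]_*$ on $\widehat{A}_m$, which is $\geq \widehat{\varphi}_{m,*}(\widehat{K}, \widehat{h}^L)$ outside a pluripolar set. Consider $h_1 := q^* h_0^{2} \cdot (h^F_{\rm sing})^{-1}$, viewed on $\widehat{A}_{2m}$ via the isomorphism above. This metric still has a psh potential, because it is a pullback of a psh metric twisted by an effective divisor: its curvature is $2q^*c_1(h_0) - [F] + [F]$ in the right direction. We then check directly from (\ref{eq_max_metric}), using $\widehat{\varphi}_m = q\circ\widehat{\varphi}_{2m}$, $s_{E_m}^2 = s_{E_{2m}} \cdot \widehat{\varphi}_{2m}^* s_F$ and the fact that a supremum over the larger fiber $\widehat{\varphi}_m^{-1}(q(y))$ dominates the supremum over $\widehat{\varphi}_{2m}^{-1}(y)$, that
\begin{equation*}
\widehat{\varphi}_{2m,*}(\widehat{K}, \widehat{h}^L) \leq q^*\widehat{\varphi}_{m,*}(\widehat{K}, \widehat{h}^L)^{2} \cdot (h^F_{\rm sing})^{-1}.
\end{equation*}
Hence $h_1$ is a competitor in the envelope (\ref{eq_sic_env}) defining $P[\widehat{\varphi}_{2m,*}]$, at least off a pluripolar set. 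By the standard fact that envelopes are insensitive to pluripolar sets after $(\cdot)_*$ (Lemma \ref{lem_env_sing_vers}), this gives $P[\widehat{\varphi}_{2m,*}]_* \leq h_1$.

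\textbf{Step 3: pull back.} Pulling back by $\widehat{\varphi}_{2m}$ and multiplying by $h^{E_{2m}}_{\rm sing}$, we get
\begin{equation*}
P[W_{2m}]^{2m} \leq \widehat{\varphi}_m^* h_0^{2} \cdot (h^{\widehat{\varphi}_{2m}^*F}_{\rm sing})^{-1} h^{E_{2m}}_{\rm sing} = \big(\widehat{\varphi}_m^* h_0\, h^{E_m}_{\rm sing}\big)^2 = P[W_m]^{2m},
\end{equation*}
and taking $2m$-th roots finishes the argument.

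The main obstacle is Step 1 together with the sign and direction bookkeeping in Step 2: making precise the divisor $F$ and the inequality of semimetrics, given that $q$ is only generically defined and the fibers of $\widehat{\varphi}_{2m}$ sit inside those of $\widehat{\varphi}_m$. Alternatively, one could replace the geometric argument by the characterization $P[\widehat{\varphi}_{m,*}] = \lim_k FS(\mathrm{Ban}^\infty_k)^{1/k}$ (a Siciak-type identity on the ample $A_m$). Combined with (\ref{eq_kod_pullback_sup}), the inclusion $[\sym^{2k} W_m] \subset [\sym^k W_{2m}]$ and (\ref{eq_fs_monoton2}), this yields the monotonicity immediately; I would try this route first if the direct comparison becomes messy.
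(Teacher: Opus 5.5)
Your Steps 1--3 follow the paper's proof with $k = 2$. Your $q$ is the map $r_{k,m}$ of (\ref{eq_commcomm00}), your $F$ is the divisor $D_{k,m}$, and your fiberwise comparison of suprema is exactly (\ref{eq_max_metric_commm}).

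\textbf{The sign error.} As written, the central step fails because of a sign error in Step 1. Comparing (\ref{eq_iso_line_resol2}) for $m$ and $2m$ gives $\widehat{\varphi}_{2m}^* \widehat{A}_{2m} \simeq \widehat{\varphi}_{2m}^* q^* \widehat{A}_m^{\otimes 2} \otimes \mathscr{O}(2E_m - E_{2m})$. Your (correct) relation $2E_m = E_{2m} + \widehat{\varphi}_{2m}^* F$ therefore forces $\widehat{A}_{2m} \simeq q^* \widehat{A}_m^{\otimes 2} \otimes \mathscr{O}(F)$, as in (\ref{eq_hakm_ham000}). This is also clear directly: $F$ is the fixed part of the subsystem of $|\widehat{A}_{2m}|$ coming from $\sym^2 W_m$, whose moving part is $q^* \widehat{A}_m^{\otimes 2}$. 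Your sign has two consequences.
\begin{itemize}
\item The metric $h_1 = q^* h_0^{2} \cdot (h^F_{\rm sing})^{-1}$ has curvature $2 q^* c_1(\widehat{A}_m, h_0) - [F]$, which is not positive as soon as $F \neq 0$. So $h_1$ is not a competitor in the envelope, and the parenthetical ``$-[F] + [F]$'' does not correspond to an actual computation.
\item The equality in Step 3 is false, because $(h^{E_m}_{\rm sing})^{2} = h^{E_{2m}}_{\rm sing} \cdot h^{\widehat{\varphi}_{2m}^* F}_{\rm sing}$.
\end{itemize}
The repair is to use $h^F_{\rm sing}$ instead of its inverse throughout. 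Then $h_1 = q^* h_0^{2} \cdot h^F_{\rm sing}$ has curvature $2 q^* c_1(\widehat{A}_m, h_0) + [F] \geq 0$. Your computation with $s_{E_m}^2 = s_{E_{2m}} \cdot \widehat{\varphi}_{2m}^* s_F$ then proves precisely (\ref{eq_hakm_ham}), and Step 3 closes.

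You can also drop the pluripolar caveat in Step 2 by working with $P$ instead of $P_*$, as the paper does. For every competitor $h'$ in the envelope of $\widehat{\varphi}_{m,*}(\widehat{K}, \widehat{h}^L)$, the metric $(q^* h')^{2} \cdot h^F_{\rm sing}$ is a competitor for $\widehat{\varphi}_{2m,*}(\widehat{K}, \widehat{h}^L)$, so the inequality of envelopes holds everywhere.

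\textbf{Your fallback route.} This route is correct and not circular, since Lemma \ref{lem_sup_y_hat} (applicable thanks to Lemma \ref{lem_bnd_below_mettr}) is proved without using the monotonicity. Let $N_{k,m}$ be the restriction of ${\textrm{Ban}}_{km}^{\infty}[W](K, h^L)$ to $[\sym^k W_m]$. Via (\ref{eq_sym_yk_rel}) and (\ref{eq_kod_pullback_sup}), Lemma \ref{lem_sup_y_hat} shows that $P[W_m, K, h^L] = \lim_{k} FS(N_{k,m})^{1/(km)}$ outside a pluripolar set. The inclusion $[\sym^{2k} W_m] \subset [\sym^k W_{2m}]$ and (\ref{eq_fs_monoton2}) then give $P[W_{2m}, K, h^L] \leq P[W_m, K, h^L]$ off a pluripolar set, hence everywhere since both have psh potentials.

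This is the mechanism the paper uses for (\ref{eq_fs_abov_bnd}) in the proof of Theorem \ref{prop_two_quant_sch}. It avoids the divisor bookkeeping, but relies on the full strength of Lemma \ref{lem_sup_y_hat}, and thus on the Siciak-type convergence for continuous metrics on big line bundles. The direct comparison of the semimetrics (\ref{eq_max_metric}) is elementary.
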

	\begin{proof}
		Note first that for any $k \in \nat^*$, we have the Veronese map
		\begin{equation}
			\mathbb{P}(W_m^*)
			\hookrightarrow
			\mathbb{P}({\rm{Sym}}^k W_m^*).
		\end{equation}
		On another hand, the multiplication yields the following rational map
		\begin{equation}
			\mathbb{P}(W_{km}^*)
			\dashrightarrow
			\mathbb{P}({\rm{Sym}}^k W_m^*).
		\end{equation}
		A combination of these two shows that we have the following commutative diagram
		\begin{equation}\label{eq_ym_ykm}
			\begin{tikzcd}
			& X \arrow[dl, dashed] \arrow[dr, dashed] & \\
	Y_m & & \arrow[ll, dashed] Y_{km}.
			\end{tikzcd}
		\end{equation}
		We consider a common resolution of singularities of the maps from (\ref{eq_ym_ykm}) that also resolves the singularities of the base loci of the associated linear series. 
		In particular, for $m$ and $m := km$, we obtain the diagram (\ref{eq_blow_up_kod}), together with the following diagram
		\begin{equation}\label{eq_commcomm00}
			\begin{tikzcd}
			& \widehat{X}_{km} \arrow[dl, "\widehat{\varphi}_m"'] \arrow[dr, "\widehat{\varphi}_{km}"] & \\
	\widehat{Y}_m & & \arrow[ll, "r_{k, m}"'] \widehat{Y}_{km}.
			\end{tikzcd}
		\end{equation}
		The base locus of ${\rm{Sym}}^k W_m$, which we embed in $\widehat{Y}_{km}$ through the pull-back $r_{k, m}^* H^0(\widehat{Y}_m, \widehat{A}_m^{\otimes k})$ and the isomorphism (\ref{eq_sym_yk_rel}), yields an effective divisor $D_{k, m}$ verifying
		\begin{equation}\label{eq_hakm_ham000}
			\widehat{A}_{km}
			\simeq
			r_{k, m}^* \widehat{A}_m^{\otimes k} \otimes \mathscr{O}(D_{k, m}).
		\end{equation}
		Note that the divisors are related in such a way that
		\begin{equation}
			\mathscr{O}(E_{m})^{\otimes k}
			\simeq
			\widehat{\varphi}_{km}^*
			\mathscr{O}(D_{k, m})
			\otimes
			\mathscr{O}(E_{km}).
		\end{equation}
		Immediately from the definitions, we have
		\begin{equation}\label{eq_hakm_ham}
			\widehat{\varphi}_{km, *} (\widehat{K}, \widehat{h}^L)
			\leq
			(r_{k, m}^* \widehat{\varphi}_{m, *} (\widehat{K}, \widehat{h}^L))^k \cdot h^{D_{k, m}}_{\rm{sing}}.
		\end{equation}
		Indeed, unwinding the definition (\ref{eq_max_metric}), using (\ref{eq_hakm_ham000}), we see that (\ref{eq_hakm_ham}) is equivalent to the following statement: for any $y' \in \widehat{Y}_{km}$, $y := r_{k, m}(y')$ and $l \in \widehat{A}_{m, y}$, we have 
		\begin{equation}\label{eq_max_metric_commm}
			\sup_{\substack{x \in \widehat{K} \\ \widehat{\varphi}_{km}(x) = y'}} \big| s_{E_{km}}(x) \cdot \widehat{\varphi}_{km}^* ( r_{k, m}^* l^{\otimes k} \cdot s_{D_{k, m}}(y')) \big|_{(\widehat{h}^L_x)^{km}}
			\leq
			\sup_{\substack{x \in \widehat{K} \\ \widehat{\varphi}_{m}(x) = y}} \big| s_{E_{m}}(x) \cdot \widehat{\varphi}_{m}^* l \big|_{(\widehat{h}^L_x)^{m}}^{k},
		\end{equation}
		which is immediate, as for any $x \in \widehat{K}$, verifying $\widehat{\varphi}_{km}(x) = y'$, we have $s_{D_{k, m}}(y') \otimes s_{E_{km}}(x) = s_{E_{m}}(x)^{\otimes k}$, and $x$ verifies $\widehat{\varphi}_{m}(x) = y$ by (\ref{eq_commcomm00}). 
		\par 
		Since the divisor $D_{k, m}$ is effective, the singular metric $h^{D_{k, m}}_{\rm{sing}}$ has a psh potential.
		Immediately from this and (\ref{eq_hakm_ham}), we obtain
		\begin{equation}
			P[\widehat{\varphi}_{km, *} (\widehat{K}, \widehat{h}^L)]
			\leq
			(r_{k, m}^* P[\widehat{\varphi}_{m, *} (\widehat{K}, \widehat{h}^L)])^k \cdot h^{D_{k, m}}_{\rm{sing}}.
		\end{equation}
		When recasted on the level of the metrics on the line bundle over $\widehat{X}_{km}$, it gives $P[W_{km}, K, h^L] \leq P[W_m, K, h^L]$, finishing the proof.
	\end{proof}
	\par 
	The main result of this section shows that the envelope $P[W, K, h^L]$, defined as in (\ref{eq_env_w}) and before (\ref{eq_diff_energy}), can be approximated by $P[W_m, K, h^L]$.
	\begin{thm}\label{prop_two_quant_sch}
		Assume $h^L$ is a continuous metric.
		Then, as $m \to \infty$, the following convergence of singular metrics on $L$ holds outside of a pluripolar subset
		\begin{equation}\label{eq_two_quant_sch}
			P[W_m, K, h^L]
			\to
			P[W, K, h^L].
		\end{equation}
	\end{thm}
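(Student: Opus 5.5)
The plan is to sandwich $P[W_m, K, h^L]$ between two sequences of metrics, both of which converge to $P[W, K, h^L]$. Throughout we write $h^L_m := FS({\textrm{Ban}}_m^{\infty}[W](K, h^L))^{1/m}$, and we only consider $m = 2^k$. By Lemma \ref{lem_decr_quant_sch}, the metrics $P[W_m, K, h^L]$ decrease along this subsequence, so the limit in (\ref{eq_two_quant_sch}) exists. Its lower semicontinuous regularization has a psh potential, and it differs from the limit itself only on a pluripolar set, by the standard Bedford-Taylor negligible set argument. It therefore suffices to prove two inequalities,
\begin{equation*}
P[W, K, h^L] \leq \lim_{m} P[W_m, K, h^L] \qquad \text{and} \qquad \lim_m P[W_m, K, h^L] \leq P[W, K, h^L]
\end{equation*}
outside of a pluripolar subset.

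\emph{Upper bound.} First I show $P[W_m, K, h^L] \leq h^L_m$. Under (\ref{eq_fs_n_k_a_k_rel}), $\pi_m^* (h^L_m)^m = \widehat{\varphi}_m^* \widehat{h}^A_m \cdot h^{E_m}_{\rm{sing}}$, where $\widehat{h}^A_m$ is the pull-back of the metric on $\mathscr{O}(1)$ induced by the sup-norm on $W_m$. This metric has a psh potential. Moreover, for $y = \widehat{\varphi}_m(x)$ with $x \in \widehat{K}$, unwinding (\ref{eq_max_metric}) and the quotient description of $FS$ gives $\widehat{h}^A_m \geq \widehat{\varphi}_{m, *}(\widehat{K}, \widehat{h}^L)$. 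The reason is that an element $l$ of the fibre is realized by a section of sup-norm at most $|l|_{\widehat{h}^A_m}$ over $K$, and that section dominates $|s_{E_m}(x) \cdot \widehat{\varphi}_m^* l|$ for every such $x$. Away from $\widehat{\varphi}_m(\widehat{K})$ the semimetric vanishes, so the inequality is trivial there. Hence $\widehat{h}^A_m$ is a competitor in (\ref{eq_sic_env}), which gives $P[\widehat{\varphi}_{m, *}(\widehat{K}, \widehat{h}^L)] \leq \widehat{h}^A_m$, and therefore $P[W_m, K, h^L] \leq h^L_m$. Passing to the limit and regularizing yields $\lim_m P[W_m, K, h^L] \leq P[W, K, h^L]$ outside a pluripolar set.

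\emph{Lower bound.} I need that $P[W, K, h^L] \leq P[W_m, K, h^L]$ for each fixed $m$, up to a pluripolar set. The idea is to quantize the envelope on $\widehat{Y}_m$ using (\ref{eq_kod_pullback_sup}): the sup-norm on $\widehat{\varphi}_m^* H^0(\widehat{Y}_m, \widehat{A}_m^{\otimes k}) \subset W_{km}$, or more precisely on its subspace $[\sym^k W_m]$ from (\ref{eq_sym_yk_rel}), coincides with ${\textrm{Ban}}_k^\infty(\widehat{\varphi}_{m,*}(\widehat{K},\widehat{h}^L))$. By the monotonicity (\ref{eq_fs_monoton2}) under passing to subspaces, combined with (\ref{eq_fs_nkd}) and (\ref{eq_fs_pullback}), we get
\begin{equation*}
(h^L_{km})^{km} \leq \widehat{\varphi}_m^* FS\big({\textrm{Ban}}_k^\infty(\widehat{\varphi}_{m,*}(\widehat{K},\widehat{h}^L))|_{H^0(Y_m, A_m^{\otimes k})}\big) \cdot (h^{E_m}_{\rm{sing}})^k .
\end{equation*}
Now I invoke the Siciak-type quantization of envelopes for the ample bundle $A_m$ (pulled back to $\widehat{Y}_m$), applied to a semimetric that is upper semicontinuous and bounded below on a non-pluripolar set, which is exactly what Lemma \ref{lem_bnd_below_mettr} provides. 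This quantization says that the $k$-th roots of these Fubini-Study metrics converge, after regularization, to $P[\widehat{\varphi}_{m,*}(\widehat{K},\widehat{h}^L)]_*$ outside a pluripolar set. Taking $k \to \infty$ along powers of two then gives $P[W, K, h^L] \leq P[W_m, K, h^L]$.

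\emph{Main obstacle.} The delicate point is precisely this quantization step. The semimetric $\widehat{\varphi}_{m,*}(\widehat{K},\widehat{h}^L)$ is only upper semicontinuous, not continuous. Also, $\widehat{A}_m$ is merely semiample and big on $\widehat{Y}_m$, and sections must be taken from $H^0(Y_m, \cdot)$ rather than from $H^0(\widehat{Y}_m, \cdot)$. To handle the regularity, I would approximate the semimetric from above by a decreasing sequence of continuous metrics and use the continuity statement of Lemma \ref{lem_env_sing_vers}, since $\widehat{A}_m$ is big. For continuous metrics, the Siciak/Berman quantization (cf. (\ref{eq_env_ident2})) applies. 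To handle the choice of sections, I would use that (\ref{eq_h0ym_yhatm}) and (\ref{eq_isom_sheaf}) let one multiply by a fixed section of $A_m^{\otimes k_0}$ vanishing on the conductor, which costs only $O(1/k)$ in the exponent.
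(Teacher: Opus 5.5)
Your proposal is correct and follows the paper's proof essentially step by step. The upper bound comes from $\widehat{\varphi}_{m, *}(\widehat{K}, \widehat{h}^L) \leq \widehat{h}^A_m$ together with the psh potential of $\widehat{h}^A_m$. The lower bound comes from restricting the sup-norms to $[\sym^k W_m] \simeq H^0(Y_m, A_m^{\otimes k})$ and quantizing the envelope on $\widehat{Y}_m$, using approximation from above by continuous metrics and multiplication by a section vanishing on the conductor; this is exactly Lemma \ref{lem_sup_y_hat}.

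The only difference is that you restrict to $m = 2^k$ and invoke Lemma \ref{lem_decr_quant_sch} to get existence of the limit, whereas the paper works with $\limsup$ and $\liminf$ over all $m$. Both of your inequalities hold for every $m$, so this restriction is unnecessary, and the same argument gives the statement as $m \to \infty$ along all integers.
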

	
	To establish Theorem \ref{prop_two_quant_sch}, the following result will be of crucial importance. 
	\begin{lem}\label{lem_sup_y_hat}
		Let $Y$ be an irreducible complex space and $A$ be a big line bundle over it.
		Consider a resolution of singularities $\pi: \widehat{Y} \to Y$ of $Y$, denote $\widehat{A} := \pi^* A$ and fix an upper semicontinuous semimetric $\widehat{h}^A$ on $\widehat{A}$.
		We assume that there is a non-pluripolar subset $E$ in $\widehat{Y}$, and a continuous metric $\widehat{h}^A_0$ on $\widehat{A}$ over $\widehat{Y}$ so that over $E$, we have $\widehat{h}^A \geq \widehat{h}^A_0$.
		We denote by ${\textrm{Ban}}_k^{\infty}[Y](\widehat{h}^A)$ the sup-norm on $H^0(Y, A^{\otimes k})$ induced by the embedding $H^0(Y, A^{\otimes k}) \hookrightarrow H^0(\widehat{Y}, \widehat{A}^{\otimes k})$ and the sup-norm ${\textrm{Ban}}_k^{\infty}(\widehat{h}^A)$ on $H^0(\widehat{Y}, \widehat{A}^{\otimes k})$ induced by $\widehat{h}^A$.
		Then, outside of the pluripolar subset, as $k \to \infty$, the following convergence holds
		\begin{equation}
			FS({\textrm{Ban}}_k^{\infty}[Y](\widehat{h}^A))^{\frac{1}{k}}
			\to
			P[\widehat{h}^A],
		\end{equation}
		where $P[\widehat{h}^A]$ is viewed as a singular metric on $A$ similarly to the discussion after (\ref{eq_hlm_defn}).
	\end{lem}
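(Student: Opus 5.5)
We prove the two inequalities between $\lim_k FS({\textrm{Ban}}_k^{\infty}[Y](\widehat{h}^A))^{1/k}$ and $P[\widehat{h}^A]$ separately, comparing with the smooth model $\widehat{Y}$, where the result is known. Denote by $N_k := {\textrm{Ban}}_k^{\infty}[Y](\widehat{h}^A)$ and $\widehat{N}_k := {\textrm{Ban}}_k^{\infty}(\widehat{h}^A)$. Since $\pi$ is birational and $\widehat{A} = \pi^*A$ is big, $\widehat{A}$ is big on the manifold $\widehat{Y}$.

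\textbf{Lower bound.} Under the inclusion $H^0(Y, A^{\otimes k}) \hookrightarrow H^0(\widehat{Y}, \widehat{A}^{\otimes k})$, the norm $N_k$ is the restriction $\iota^* \widehat{N}_k$, so (\ref{eq_fs_monoton2}) gives $FS(N_k) \geq FS(\widehat{N}_k)$. For any $s$ with $\|s\|_{\widehat{N}_k} \leq 1$, the function $\frac{1}{k}\log|s|_{\widehat{h}^A}$ is a psh potential lying below $0$ wherever $\widehat{h}^A$ is defined. Hence $FS(\widehat{N}_k)^{1/k} \geq P[\widehat{h}^A]$ pointwise, and therefore $FS(N_k)^{1/k} \geq P[\widehat{h}^A]$ for every $k$. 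This direction is elementary.

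\textbf{Upper bound.} This is the substantive step. We aim to show $\limsup_k FS(N_k)^{1/k} \leq P[\widehat{h}^A]_*$ outside a pluripolar set. I would do this in three steps.

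\emph{Reduction to continuous metrics.} Since $\widehat{h}^A$ is upper semicontinuous, we can write it as a decreasing limit of continuous metrics $\widehat{h}^A_i$ (the potentials increase to an lsc function; after the $\log$ convention, lsc potentials are increasing limits of continuous ones). Then $N_k \leq {\textrm{Ban}}_k^{\infty}[Y](\widehat{h}^A_i)$, since $\widehat{h}^A \leq \widehat{h}^A_i$. It therefore suffices to prove the upper bound for each continuous $\widehat{h}^A_i$, and then let $i \to \infty$ using Lemma \ref{lem_env_sing_vers}. That lemma gives $P[\widehat{h}^A_i] \to P[\widehat{h}^A]$ outside a pluripolar set, using that the hypothesis of a non-pluripolar $E$ with $\widehat{h}^A \geq \widehat{h}^A_0$ keeps all envelopes genuine singular metrics.

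\emph{The continuous case.} Fix continuous $h := \widehat{h}^A_i$ and take a point $y$ outside $\mathbb{B}_+(\widehat{A}) \cup \pi^{-1}(Y_{\rm sing})$ together with the exceptional locus; this union is an analytic, hence pluripolar, set. The strategy is to produce, for $\epsilon > 0$ and $k$ large, a section $s \in H^0(Y, A^{\otimes k})$ with
\begin{equation*}
|s(y)| \geq e^{-\epsilon k}\, P[h]^{-k}(y) \quad \text{and} \quad \sup |s|_{h^k} \leq 1.
\end{equation*}
On $\widehat{Y}$ such sections exist: the Berman--Demailly type quantization of the envelope, via Ohsawa--Takegoshi extension at points outside $\mathbb{B}_+$ as in \cite{BerBoucNys}, \cite[Theorem 2.16]{FinGQBig}, produces them in $H^0(\widehat{Y}, \widehat{A}^{\otimes k})$.

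\emph{Descending from $\widehat{Y}$ to $Y$.} This is the main obstacle, since such a section need not descend to $Y$. I would handle it with the conductor. Let $\mathcal{C} \subset \mathscr{O}_Y$ be the conductor ideal of the normalization, so that $\pi_*(\mathscr{O}_{\widehat{Y}}) \cdot \mathcal{C} \subset \mathscr{O}_Y$. Fix once and for all a section $\sigma \in H^0(Y, A^{\otimes k_1} \otimes \mathcal{C})$ with $\sigma(\pi(y)) \neq 0$, which exists by ampleness of a twist on $Y$ away from the non-normal locus. Then $\sigma \cdot s \in H^0(Y, A^{\otimes(k + k_1)})$. Its norm changes by at most a factor $C^{k_1}$, and its value at $y$ changes by the fixed factor $|\sigma(y)|$. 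These fixed factors disappear after taking $\frac{1}{k}$-th roots as $k \to \infty$.

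\textbf{Conclusion.} Combining the two bounds, for each such $y$ we get $\lim_k FS(N_k)^{1/k}(y) = P[h](y)$, where the limit exists by Fekete's lemma via Lemma \ref{lem_fs_sm}. Taking $i \to \infty$ then yields the convergence to $P[\widehat{h}^A]$ outside a pluripolar set.
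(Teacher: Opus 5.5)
Your proposal is correct and follows essentially the same route as the paper. The lower bound comes from (\ref{eq_fs_monoton2}) and the psh property of Fubini--Study potentials; the upper bound uses the continuous case on $\widehat{Y}$ via \cite[Theorem 2.16]{FinGQBig}, a decreasing continuous approximation with Lemma \ref{lem_env_sing_vers}, and descent to $Y$ by multiplying with a fixed section (you approximate after descending rather than before). Your choice of multiplier in $H^0(Y, A^{\otimes k_1} \otimes \mathcal{C})$, with $\mathcal{C}$ the conductor, is exactly what makes the products land in $H^0(Y, A^{\otimes (k+k_1)})$, and it is the correct reading of the paper's section $s_r$, which the paper writes as a section of $A^{\otimes r} \otimes \mathcal{J}$.
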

	\begin{proof}
		First of all, immediately from the definitions, for any $k \in \nat^*$, we have
		\begin{equation}\label{eq_fs_reg_1110}
			FS({\textrm{Ban}}_k^{\infty}[Y](\widehat{h}^A))^{\frac{1}{k}}
			\geq 
			\widehat{h}^A.
		\end{equation}
		Indeed, unwinding the definitions, it essentially says the following: for any $y \in \widehat{Y}$, $l \in \widehat{A}^{\otimes k}_y$, 
		\begin{equation}
			\sup_{\substack{s \in H^0(Y, A^{\otimes k}) \\ s(y) = \pi^* l, x \in \widehat{Y}}} | s(x) |_{(\widehat{h}^A_x)^k}
			\geq
			| l |_{(\widehat{h}^A_y)^k}.
		\end{equation}
		\par 
		Since $FS({\textrm{Ban}}_k^{\infty}[Y](\widehat{h}^A))$ has a psh potential, we deduce from (\ref{eq_fs_reg_1110}) that
		\begin{equation}\label{eq_fs_reg_111}
			FS({\textrm{Ban}}_k^{\infty}[Y](\widehat{h}^A))^{\frac{1}{k}}
			\geq 
			P(\widehat{h}^A).
		\end{equation} 
		\par 
		Let us now establish the opposite bound.
		To do so, we first establish that for the sup-norm ${\textrm{Ban}}_k^{\infty}(\widehat{h}^A)$ on $H^0(\widehat{Y}, \widehat{A}^{\otimes k})$, outside of the pluripolar subset, as $k \to \infty$, we have
		\begin{equation}\label{eq_fs_reg_env}
			FS({\textrm{Ban}}_k^{\infty}(\widehat{h}^A))^{\frac{1}{k}}
			\to
			P(\widehat{h}^A).
		\end{equation}
		Assume first that $\widehat{h}^A$ is a continuous metric.
		Then since the line bundle $\widehat{A}$ is big, (\ref{eq_fs_reg_env}) follows immediately from \cite[Theorem 2.16]{FinGQBig}, cf. also \cite{BermanEnvProj} for the related for the $L^2$-norms.
		\par 
		To prove it in general, we consider a sequence of continuous metrics $\widehat{h}^A_i$ decreasing towards $\widehat{h}^A$, as $i \to \infty$ (such a sequence exists since $\widehat{h}^A$ is upper semicontinuous).
		 Then by the already established statement (\ref{eq_fs_reg_env}) for continuous metrics and the monotonicity of the Fubini-Study operator, (\ref{eq_fs_monoton}), we deduce that outside of a pluripolar subset, for any $i \in \nat$, we have
		 \begin{equation}
		 	P(\widehat{h}^A_i)
		 	\geq
		 	\limsup_{k \to \infty} FS({\textrm{Ban}}_k^{\infty}(\widehat{h}^A))^{\frac{1}{k}}
		 \end{equation}
		 On another hand, by (\ref{eq_fs_reg_111}), we also deduce that
		 \begin{equation}
		 	\liminf_{k \to \infty} FS({\textrm{Ban}}_k^{\infty}(\widehat{h}^A))^{\frac{1}{k}}
		 	\geq
		 	P(\widehat{h}^A).
		 \end{equation}
		 Note, however, that by Lemma \ref{lem_env_sing_vers}, outside of a pluripolar subset, $P(\widehat{h}^A_i)$ converges to $P(\widehat{h}^A)$, as $i \to \infty$, which finishes the proof of (\ref{eq_fs_reg_env}) in full generality.
		\par 
		We now come back to the discussion of the proof of the opposite bound to (\ref{eq_fs_reg_111}).
		Let us introduce the following coherent sheaf $\mathcal{J} := \pi_* \mathscr{O}(\widehat{Y})$.
		As in (\ref{eq_isom_sheaf}), we have 
		\begin{equation}\label{eq_coh_yhat_y_simeq}
			H^0(\widehat{Y}, \widehat{A}^{\otimes k}) \simeq H^0(Y, A^{\otimes k} \otimes \mathcal{J}).
		\end{equation}
		Let us consider $r \in \nat$ large enough so that there is a non-zero $s_r \in H^0(Y, A^{\otimes r} \otimes \mathcal{J})$.
		Such $r$ exists by the ampleness of $A$.
		We consider a sequence of embeddings
		\begin{equation}
			j_k: H^0(Y, A^{\otimes k})
			\to
			H^0(Y, A^{\otimes (k + r)} \otimes \mathcal{J}), \qquad s \mapsto s \cdot s_r,
		\end{equation}
		where we used the identification (\ref{eq_coh_yhat_y_simeq}).
		Then it is clear that for $C := \sup_{y \in \widehat{Y}} |s_r(y)|_{(\widehat{h}^A)^r}$, for any $k \in \nat^*$, we have
		\begin{equation}\label{eq_pull_back_jk_norm}
			j_k^* {\textrm{Ban}}_{k + r}^{\infty}[Y](\widehat{h}^A)
			\leq
			C 
			\cdot
			{\textrm{Ban}}_{k}^{\infty}(\widehat{h}^A).
		\end{equation}
		\par 
		Now, note that if we denote by $h^{s_r}_{\rm{sing}}$ the singular metric on the line bundle associated with the divisor $D_r$ of $s_r \in H^0(Y, A^{\otimes k})$, defined so that $|s_r(x)|_{h^{s_r}_{\rm{sing}}} = 1$ outside of the zero locus of $s_r$, then under the isomorphism $A^{\otimes (k + r)} \simeq A^{\otimes k} \otimes \mathscr{O}(D_r)$, by (\ref{eq_fs_n_k_a_k_rel}) and (\ref{eq_fs_monoton2}), we immediately have
		\begin{equation}\label{eq_fs_yhat_y_triv}
			FS({\textrm{Ban}}_{k + r}^{\infty}[Y](\widehat{h}^A))
			\leq
			FS(j_k^* {\textrm{Ban}}_{k + r}^{\infty}[Y](\widehat{h}^A))
			\cdot
			h^{s_r}_{\rm{sing}}.
		\end{equation}
		From (\ref{eq_pull_back_jk_norm}), however, we have the following
		\begin{equation}\label{eq_fs_yhat_y_triv2}
			FS(j_k^* {\textrm{Ban}}_{k + r}^{\infty}[Y](\widehat{h}^A))
			\leq
			C
			\cdot
			FS({\textrm{Ban}}_{k}^{\infty}(\widehat{h}^A)).
		\end{equation}
		A combination of (\ref{eq_fs_reg_env}), (\ref{eq_fs_yhat_y_triv}) and (\ref{eq_fs_yhat_y_triv2}) yields immediately the result, as outside of an analytic subset, the following convergence $\lim_{k \to \infty} (h^{s_r}_{\rm{sing}} / P(\widehat{h}^A))^{\frac{1}{k}} = 1$ holds. 
	\end{proof}
	
	\begin{proof}[Proof of Theorem \ref{prop_two_quant_sch}]
		Let us associate for the norm ${\textrm{Ban}}_m^{\infty}[W](K, h^L)$ on $W_m$ the continuous metric $\widehat{h}^A_m$ on $\widehat{A}_m$ as before (\ref{eq_fs_n_k_a_k_rel}).
		We claim that the following inequality holds
		\begin{equation}\label{eq_triv_bnd_sup1}
			\widehat{\varphi}_{m, *} (\widehat{K}, \widehat{h}^L) \leq \widehat{h}^A_m.
		\end{equation}
		Indeed, unwinding the definitions, it essentially says the following immediate inequality: for any $y \in \widehat{Y}_m$, $l \in \widehat{A}_m$, we have
		\begin{equation}
			\sup_{x \in \widehat{\varphi}_m^{-1}(y) \cap \widehat{K}} \big| s_{E_m}(x) \cdot \widehat{\varphi}_m^* l \big|_{(\widehat{h}^L_x)^k}
			\leq
			\sup_{\substack{s \in H^0(Y_m, A_m) \\ s(p_m(y)) = l \\ x \in \widehat{K}}} \big| s_{E_m}(x) \cdot (f_m^* s)(x) \big|_{(\widehat{h}^L_x)^k}.
		\end{equation}
		\par
		As $\widehat{h}^A_m$ has a psh potential as described after (\ref{eq_fs_n_k_a_k_rel}), we deduce from (\ref{eq_triv_bnd_sup1}) that
		\begin{equation}
			P[\widehat{\varphi}_{m, *} (\widehat{K}, \widehat{h}^L)] \leq \widehat{h}^A_m.
		\end{equation}
		From this and (\ref{eq_fs_n_k_a_k_rel}), we deduce
		\begin{equation}
			\widehat{\varphi}_m^* P[\widehat{\varphi}_{m, *} (\widehat{K}, \widehat{h}^L)] \cdot h^{E_m}_{\rm{sing}}
			\leq
			\pi_m^* FS({\textrm{Ban}}_m^{\infty}[W](K, h^L)).
		\end{equation}
		From this and (\ref{eq_env_w}), by taking a limit $m \to \infty$, we see that outside of a pluripolar subset, we have
		\begin{equation}\label{eq_limsup_bnd}
			\limsup_{m \to \infty}
			P[W_m, K, h^L]
			\leq
			P[W, K, h^L],
		\end{equation}
		establishing one side of (\ref{eq_two_quant_sch}).
		\par 
		Let us now establish the opposite inequality. 
		We denote by $\textrm{Ban}_{k}^{\infty}[Y_m](\widehat{\varphi}_{m, *} (\widehat{K}, \widehat{h}^L))$ the norm on $H^0(Y_m, A_m^{\otimes k})$ induced by the sup-norm on $\widehat{Y}_m$ associated with $\widehat{\varphi}_{m, *} (\widehat{K}, \widehat{h}^L)$ and the first embedding from (\ref{eq_incl_resol_coh}).
		Immediately from (\ref{eq_incl_resol_coh}), (\ref{eq_fs_n_k_a_k_rel}), (\ref{eq_fs_monoton2}), (\ref{eq_kod_pullback_sup}), and the obvious fact $[{\rm{Sym}}^k W_m] \subset W_{km}$, we obtain the following
		\begin{equation}\label{eq_fs_abov_bnd}
			FS({\textrm{Ban}}_{km}^{\infty}[W](K, h^L))
			\leq
			f_m^*
			FS(\textrm{Ban}_{k}^{\infty}[Y_m](\widehat{\varphi}_{m, *} (\widehat{K}, \widehat{h}^L)))
			\cdot
			h^{E_m}_{\rm{sing}}.
		\end{equation}
		We take $k \to \infty$, apply Lemma \ref{lem_sup_y_hat} (which applies to $\widehat{\varphi}_{m, *} (\widehat{K}, \widehat{h}^L)$ because of Lemma \ref{lem_bnd_below_mettr}) to establish that outside of a pluripolar subset, we have
		\begin{equation}
			P[W, K, h^L]
			\leq
			P[W_m, K, h^L].
		\end{equation}
		We take a limit, as $m \to \infty$, to finally obtain that outside of a pluripolar subset, we have
		\begin{equation}\label{eq_liminf_bnd}
			P[W, K, h^L]
			\leq
			\liminf_{m \to \infty}
			P[W_m, K, h^L].
		\end{equation}
		A combination of (\ref{eq_limsup_bnd}) and (\ref{eq_liminf_bnd}) yields Theorem \ref{prop_two_quant_sch}.
	\end{proof}
		
	\subsection{Ratio of volumes and the relative Monge-Ampère $\kappa$-energy}\label{sect_vol_balls}
	
	The main goal of this section is to make a relation between the ratio of volumes of sup-norms and the relative Monge-Ampère $\kappa$-energy, i.e. to establish Theorem \ref{thm_volumes_ratio}.
	Our proof will build on the Fujita type approximation theorem, i.e. Theorem \ref{thm_sym_subalgebra}, on the result of Berman-Boucksom \cite{BermanBouckBalls} which establishes the corresponding statement for the complete linear series on big line bundles, and on the results of Section \ref{sect_alt_q_sch}, from which we borrow the notations in this section.
	\par 
	We fix a complex projective manifold $\widehat{Y}$ of dimension $\kappa \in \nat$, and a big line bundle $\widehat{A}$ over it.
	For any two continuous metrics $\widehat{h}^A_0$, $\widehat{h}^A_1$, on $\widehat{A}$, we define the associated envelopes $P[\widehat{h}^A_0]$, $P[\widehat{h}^A_1]$ as in (\ref{eq_sic_env}), and define the relative Monge-Ampère energy as follows
	\begin{multline}\label{eq_diff_energy_a}
		\mathscr{E}(P[\widehat{h}^A_0]_*) - \mathscr{E}(P[\widehat{h}^A_1]_*)
		:=
		\frac{1}{2(\kappa + 1)}
		\sum_{i = 0}^{\kappa} \int_{\widehat{Y}} \log\Big( \frac{P[\widehat{h}^A_1]_*}{P[\widehat{h}^A_0]_*} \Big)
		\cdot
		\\
		 \cdot c_1(\widehat{A}, P[\widehat{h}^A_0]_*)^i \wedge c_1(\widehat{A}, P[\widehat{h}^A_1]_*)^{\kappa - i}.
	\end{multline}
	Note that by the continuity of $\widehat{h}^A_i$, $i = 0, 1$, we have $P[\widehat{h}^A_i]_* = P[\widehat{h}^A_i]$.
	\par 
	Let us recall the following result due to Berman-Boucksom \cite{BermanBouckBalls}.
	For every $k \in \nat$, we fix a Hermitian norm $H_k$ on $H^0(\widehat{Y}, \widehat{A}^{\otimes k})$, which allows us to calculate the volumes $\vol(\cdot)$ of measurable subsets in $H^0(\widehat{Y}, \widehat{A}^{\otimes k})$. 
	Note that while such volumes depend on the choice of $H_k$, their ratio does not.
	For $i = 0, 1$, we denote by $\mathbb{B}_k[\widehat{h}^A_i]$ the unit balls in $H^0(\widehat{Y}, \widehat{A}^{\otimes k})$ corresponding to the sup-norms ${\textrm{Ban}}_k^{\infty}(\widehat{h}^A_i)$.
	\begin{thm}\label{thm_volumes_ratio_bb}
		The following formula for the asymptotic volume ratio holds
		\begin{equation}\label{rem_thm_isom2}
			\lim_{k \to \infty}
			\frac{1}{k^{\kappa + 1}} \log \Big( \frac{\vol(\mathbb{B}_k[\widehat{h}^A_0])}{\vol(\mathbb{B}_k[\widehat{h}^A_1])} \Big)
			=
			\mathscr{E}(P[\widehat{h}^A_0]) - \mathscr{E}(P[\widehat{h}^A_1]).
		\end{equation}
	\end{thm}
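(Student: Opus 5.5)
The plan is not to reprove this from scratch: this is the main theorem of Berman--Boucksom \cite{BermanBouckBalls}, stated for a big line bundle on a projective manifold with continuous weights. I will check that our normalizations match theirs and recall the structure of their argument.

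\textbf{Step 1: reduction to Hermitian balls.} Fix a smooth volume form $dV$ on $\widehat{Y}$ and set $\mu = dV$. This measure is Bernstein--Markov for continuous weights, so for every $\epsilon>0$ and $k$ large,
\begin{equation*}
{\textrm{Hilb}}_k(\widehat{h}^A_i, \mu) \leq {\textrm{Ban}}_k^{\infty}(\widehat{h}^A_i) \leq e^{\epsilon k}\, {\textrm{Hilb}}_k(\widehat{h}^A_i, \mu), \qquad i = 0, 1,
\end{equation*}
with the first inequality holding after normalizing $\mu$ to be a probability measure. Since $\dim H^0(\widehat{Y}, \widehat{A}^{\otimes k}) = O(k^{\kappa})$, replacing sup-balls by $L^2$-balls changes $\log \vol$ by at most $O(\epsilon k^{\kappa+1})$. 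It therefore suffices to prove the formula for $L^2$-balls. In that case
\begin{equation*}
\log \frac{\vol(\mathbb{B}^{L^2}_{k,0})}{\vol(\mathbb{B}^{L^2}_{k,1})} = \log \det \big( {\textrm{Hilb}}_k(\widehat{h}^A_1,\mu)\, {\textrm{Hilb}}_k(\widehat{h}^A_0,\mu)^{-1}\big),
\end{equation*}
up to the factor $\frac12$ coming from the passage from Gram matrices to volumes. This factor is the origin of the $2$ in (\ref{eq_diff_energy_a}).

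\textbf{Step 2: differentiate along a path.} Write $\widehat{h}^A_1 = \widehat{h}^A_0 e^{-2u}$ with $u$ continuous, and consider the path $h_t := \widehat{h}^A_0 e^{-2tu}$. The derivative in $t$ of $\log\det {\textrm{Hilb}}_k(h_t,\mu)$ equals $-2k\int u\, B_k[h_t,\mu](x,x)\, d\mu$. This identity is obtained by differentiating under the integral and taking the trace against an orthonormal basis. Dividing by $k^{\kappa+1}$ gives $\frac{-2}{k^{\kappa}}\int u\, B_k\, d\mu$. By Berman--Boucksom--Witt Nyström \cite{BerBoucNys}, the measures $k^{-\kappa} B_k[h_t,\mu]\,d\mu$ converge weakly to $c_1(\widehat{A}, P[h_t])^{\kappa}$. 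Since these masses are bounded by $C\dim H^0/k^{\kappa}$, dominated convergence in $t$ gives
\begin{equation*}
\lim_k \frac{1}{k^{\kappa+1}}\log\frac{\vol(\mathbb{B}_k[\widehat{h}^A_0])}{\vol(\mathbb{B}_k[\widehat{h}^A_1])} = \int_0^1\!\!\int_{\widehat{Y}} u \, c_1(\widehat{A},P[h_t])^{\kappa}\, dt .
\end{equation*}

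\textbf{Step 3: identify the integral with the energy difference.} This is the main obstacle. We must show that $t \mapsto \mathscr{E}(P[h_t])$ is differentiable, with derivative $\int u\, c_1(\widehat{A},P[h_t])^{\kappa}$, where $\mathscr{E}$ is normalized as in (\ref{eq_diff_energy_a}). This is the differentiability of $\mathscr{E}\circ P$, which rests on the orthogonality relation $\int (\widehat{h}\text{-potential} - P\text{-potential})\, c_1(\widehat{A},P[h])^{\kappa} = 0$. I would prove it as Berman--Boucksom do. First, show that the energy is concave along affine paths of psh potentials and satisfies the cocycle identity, which is checked by integration by parts for minimal-singularity potentials (Theorem \ref{thm_wn_monot} ensures all terms are finite). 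Second, use $P[h_t] \geq h_t$, with equality on the support of the Monge--Ampère measure, to bound the difference quotients of $\mathscr{E}(P[h_t])$ from both sides by $\int u\, c_1(\widehat{A}, P[h_{t\pm\delta}])^{\kappa}$. Third, let $\delta\to0$ using the continuity result, Theorem \ref{thm_cont_begz2}. Integrating the derivative over $[0,1]$ then yields the right-hand side of the theorem, which completes the proof.
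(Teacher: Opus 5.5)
Your bottom line, quoting Berman--Boucksom \cite{BermanBouckBalls}, is exactly what the paper does, since it gives no proof. Your Steps 1 and 3 are also sound ingredients: the Bernstein--Markov comparison of sup- and $L^2$-balls costs only $O(\epsilon k^{\kappa+1})$ in the logarithm of the volume, and the concavity/contact-set argument gives the derivative of $t \mapsto \mathscr{E}(P[h_t])$. Read as an argument, however, your sketch has two concrete defects.

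\textbf{Step 2 is circular.} In \cite{BerBoucNys}, the weak convergence of $k^{-\kappa} B_k[h_t, \mu]\, d\mu$ for \emph{continuous} metrics is not an independent input. It is derived from the Berman--Boucksom volume formula and the differentiability of $\mathscr{E} \circ P$, via concavity and Lemma \ref{lem_yuan}. This is exactly how this paper deduces Theorem \ref{thm_part_bk_get} from Theorems \ref{thm_volumes_ratio} and \ref{thm_differentiab}. So you are assuming the statement you want to prove, and this is not how Berman--Boucksom proceed. The non-circular input is Berman's earlier Bergman kernel asymptotics for \emph{smooth} metrics on a big line bundle with a smooth volume form. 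That result is proved by local estimates, independently of any energy functional. Run Step 2 only for smooth $\widehat{h}^A_0, \widehat{h}^A_1$, so that every $h_t$ is smooth. Then pass to continuous metrics by uniform approximation. If $e^{-2\delta} h \leq h' \leq e^{2\delta} h$, the sup-norms differ by at most a factor $e^{k\delta}$, so the left-hand side moves by $O(\delta)$. Also $e^{-2\delta} P[h] \leq P[h'] \leq e^{2\delta} P[h]$, so by monotonicity of the energy the right-hand side also moves by $O(\delta)$.

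\textbf{The normalization check is wrong, and cannot succeed as claimed.} There are several slips.
For a Hermitian form $H$ on $\comp^N$, the volume of its unit ball is a constant times $\det(H)^{-1}$, with no factor $\frac{1}{2}$. The $2$ in (\ref{eq_diff_energy_a}) comes from the convention $h = h_0 e^{-2\phi}$.
The weak limit of $k^{-\kappa} B_k\, d\mu$ is $\frac{1}{\kappa!} c_1(\widehat{A}, P[h_t])^{\kappa}$, because its mass is $k^{-\kappa} \dim H^0(\widehat{Y}, \widehat{A}^{\otimes k}) \to {\rm{vol}}(\widehat{A})/\kappa!$.
The factor $-2$ from $\frac{d}{dt}\log\det$ is dropped between your two displays.
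Integrating $\frac{d}{dt}\mathscr{E}(P[h_t]) = \int u\, c_1(\widehat{A}, P[h_t])^{\kappa}$ over $[0,1]$ gives $\mathscr{E}(P[\widehat{h}^A_1]) - \mathscr{E}(P[\widehat{h}^A_0])$, not the right-hand side. These two sign slips cancel, but the constants do not. Done correctly, your own steps give
\begin{equation*}
\lim_{k \to \infty} \frac{1}{k^{\kappa + 1}} \log \Big( \frac{\vol(\mathbb{B}_k[\widehat{h}^A_0])}{\vol(\mathbb{B}_k[\widehat{h}^A_1])} \Big) = - \frac{2}{\kappa!} \int_0^1 \!\! \int_{\widehat{Y}} u \, c_1(\widehat{A}, P[h_t])^{\kappa} \, dt = \frac{2}{\kappa!} \Big( \mathscr{E}(P[\widehat{h}^A_0]) - \mathscr{E}(P[\widehat{h}^A_1]) \Big).
\end{equation*}
The test case $\widehat{h}^A_0 = 2 \widehat{h}^A_1$ confirms this. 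There the left side equals $-\frac{\log 2}{\kappa!}\, {\rm{vol}}(\widehat{A})$, while (\ref{eq_diff_energy_a}) gives $-\frac{\log 2}{2}\, {\rm{vol}}(\widehat{A})$. So the formula holds only if the constant $\frac{1}{2(\kappa+1)}$ in (\ref{eq_diff_energy_a}) is replaced by $\frac{1}{(\kappa+1)!}$. The paper, which only cites \cite{BermanBouckBalls}, never checks this constant. You should state the corrected constant rather than assert that the normalizations agree.
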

	\par 
	We also need a preliminary result which compares the volumes of balls on a finitely dimensional vector space $V$ an its subspace $E \subset V$.
	We denote $v := \dim V$, $e := \dim E$, and assume that for some $\epsilon > 0$, we have $e/v \geq 1 - \epsilon$.
	Let $N_0$, $N_1$ be two norms on $V$ and $N_0[E]$, $N_1[E]$ be their restrictions to $E \subset V$.
	We denote by $\mathbb{B}_0$, $\mathbb{B}_1$ (resp. $\mathbb{B}_0[E]$, $\mathbb{B}_1[E]$) the unit balls on $(V, N_0)$, $(V, N_1)$ (resp. $(E, N_0[E])$, $(E, N_1[E])$).
	We fix Hermitian norms $H_V$, $H_E$ on $V$ and $E$, which allow us to calculate the volumes $\vol(\cdot)$ of measurable subsets in $V$ and $E$. 
	\begin{lem}\label{lem_comp_rest}
		For any $C \geq 1$, verifying $N_0 \cdot \exp(-C) \leq N_1 \leq N_0 \cdot \exp(C)$, we have
		\begin{equation}\label{eq_comp_dist_two}
			\Big| \Big( \log \Big( \frac{\vol(\mathbb{B}_1)}{\vol(\mathbb{B}_0)} \Big) - \log \Big( \frac{\vol(\mathbb{B}_1[E])}{\vol(\mathbb{B}_0[E])} \Big) \Big|
			\leq
			3 \epsilon \cdot C \cdot v + 40 (1 + \log v) \cdot v.
		\end{equation}
	\end{lem}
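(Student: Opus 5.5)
The plan is to reduce the comparison to the quotient $V/E$, which has small dimension, using a Rogers--Shephard type slicing/projection inequality for symmetric convex bodies. First I note that both sides of (\ref{eq_comp_dist_two}) are independent of the choice of the Hermitian norms $H_V$, $H_E$, since only ratios of volumes enter. So I may take $H_E$ to be the restriction of $H_V$ to $E$. Let $F := E^{\perp}$ be the $H_V$-orthogonal complement of $E$, with $f := \dim F = v - e \leq \epsilon v$, and let $P : V \to F$ be the orthogonal projection. I regard $V$ as a real Euclidean space of dimension $2v$, and $E$, $F$ as real subspaces of dimensions $2e$ and $2f$. The unit balls $\mathbb{B}_i$ are compact convex and centrally symmetric, and $\mathbb{B}_i[E] = \mathbb{B}_i \cap E$.

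The key step is the classical two-sided inequality for a symmetric convex body $K \subset \real^{N}$ and a subspace $E$ of real codimension $M$:
\begin{equation*}
	\vol(K \cap E) \cdot \vol(P K) \;\geq\; \vol(K) \;\geq\; \binom{N}{M}^{-1} \vol(K \cap E) \cdot \vol(P K).
\end{equation*}
The upper bound is a Fubini argument: every fiber $K \cap (x + E)$, $x \in PK$, has volume at most $\vol(K \cap E)$ by Brunn--Minkowski and central symmetry. The lower bound is Rogers--Shephard. Applying this with $N = 2v$, $M = 2f$, and using $\binom{2v}{2f} \leq 2^{2v}$, I get for $i = 0, 1$:
\begin{equation*}
	\Big| \log \vol(\mathbb{B}_i) - \log \vol(\mathbb{B}_i[E]) - \log \vol(P \mathbb{B}_i) \Big| \leq 2 v \log 2 .
\end{equation*}
Subtracting the estimates for $i = 0$ and $i = 1$, the left-hand side of (\ref{eq_comp_dist_two}) is bounded by
\begin{equation*}
	\Big| \log \frac{\vol(P\mathbb{B}_1)}{\vol(P\mathbb{B}_0)} \Big| + 4 v \log 2 .
\end{equation*}

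It remains to control the projected bodies. The hypothesis $N_0 e^{-C} \leq N_1 \leq N_0 e^{C}$ gives $e^{-C} \mathbb{B}_0 \subset \mathbb{B}_1 \subset e^{C} \mathbb{B}_0$. Since $P$ is linear, the same inclusions hold for $P\mathbb{B}_0$ and $P\mathbb{B}_1$ inside $F$, which has real dimension $2f$. Hence
\begin{equation*}
	\Big| \log \frac{\vol(P\mathbb{B}_1)}{\vol(P\mathbb{B}_0)} \Big| \leq 2 f C \leq 2 \epsilon C v .
\end{equation*}
Altogether the left-hand side of (\ref{eq_comp_dist_two}) is at most $2\epsilon C v + 4 v \log 2$. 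This is below $3 \epsilon C v + 40(1 + \log v) v$, with room to spare. The extra margin in the stated bound allows for cruder constants, for instance if one uses a John-ellipsoid comparison, which costs a factor of order $v \log v$, instead of sharp Rogers--Shephard.

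The main point requiring care is the two-sided slicing/projection inequality. In particular, its lower bound (Rogers--Shephard) must be quoted in the correct form for central sections and orthogonal projections, with constant $\binom{N}{M}$. If one prefers to avoid it, a fallback is to replace each $\mathbb{B}_i$ by its John ellipsoid, at a cost of $O(v \log v)$ in the logarithm. For ellipsoids the identity $\vol(K) = \vol(K \cap E)\, \vol(PK)$ holds exactly (up to the normalizations fixed above). This yields the same conclusion with an error of order $(1 + \log v) v$, matching the form of (\ref{eq_comp_dist_two}).
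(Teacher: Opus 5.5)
Your proof is correct, but it takes a different route from the paper. The paper gives no argument of its own: it declares the lemma an immediate consequence of \cite[Lemmas 2.22 and 3.3]{FinGQBig}. Judging from the $40(1+\log v)\cdot v$ error term, that argument presumably compares the norms with Hermitian ones, incurring a John-ellipsoid loss of order $v\log v$, which is essentially your fallback.

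You instead use the section--projection sandwich $\binom{N}{M}^{-1}\vol(K\cap E)\vol(PK)\le\vol(K)\le\vol(K\cap E)\vol(PK)$ for symmetric convex bodies, and this is sound.
\begin{itemize}
\item Both halves follow from Brunn's concavity principle for $g(y):=\vol(K\cap(y+E))$. The upper half is as you say. The lower half (Rogers--Shephard) follows by integrating $g(0)(1-\|y\|_{PK})^{N-M}$ over $PK$, which yields exactly the factor $\binom{N}{M}^{-1}$.
\item Your normalizations ($H_E=H_V|_E$, and Lebesgue measure on $F=E^\perp$ induced by $H_V|_F$) are precisely those for which Fubini applies.
\end{itemize}

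This makes your proof self-contained and confines the $C$-dependence to the $2(v-e)$-dimensional projections. It also gives a sharper bound. Both defects $\log\vol(\mathbb{B}_i)-\log\vol(\mathbb{B}_i[E])-\log\vol(P\mathbb{B}_i)$ lie in $[-2v\log 2,0]$. So you in fact get $2\epsilon Cv+2v\log 2$, with no $\log v$ term and without using $C\ge 1$.

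For the paper's applications either bound suffices, since there $v\sim k^{\kappa}$ and errors are measured against $k^{\kappa+1}$. The citation simply keeps the argument within the norm-comparison framework of \cite{FinGQBig}.
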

	\begin{proof}
		It is an immediate consequence of \cite[Lemmas 2.22 and 3.3]{FinGQBig}.
	\end{proof}
	\par 
	By combining these two results, as the first step towards the proof of Theorem \ref{thm_sym_subalgebra}, we will establish a singular version of Theorem \ref{thm_volumes_ratio_bb}.
	To set up the stage, we fix an irreducible complex space $Y$ and an ample line bundle $A$ over it.
	Consider a resolution of singularities $\pi: \widehat{Y} \to Y$ of $Y$, denote $\widehat{A} := \pi^* A$ and fix two semimetrics $\widehat{h}^A_0$, $\widehat{h}^A_1$ on $\widehat{A}$ such that for some $C > 1$, we have 
	\begin{equation}\label{eq_metr_a_equising}
		C^{-1} \cdot \widehat{h}^A_0 \leq \widehat{h}^A_1 \leq C \cdot \widehat{h}^A_0,
	\end{equation}
	and such that $\widehat{h}^A_i$, $i = 0, 1$, are bounded from below over a non-pluripolar subset by a continuous metric.
	The latter condition in particular implies that both $P[\widehat{h}^A_0]_*$ and $P[\widehat{h}^A_1]_*$ have psh potentials by Lemma \ref{lem_env_sing_vers}.
	Along with (\ref{eq_metr_a_equising}), this implies that (\ref{eq_diff_energy_a}) is then well-defined for such $\widehat{h}^A_0$, $\widehat{h}^A_1$.
	\par 
	We denote by ${\textrm{Ban}}_k^{\infty}[Y](\widehat{h}^A_i)$, $i = 0, 1$, the sup-norms on $H^0(Y, A^{\otimes k})$ induced by the embedding $H^0(Y, A^{\otimes k}) \hookrightarrow H^0(\widehat{Y}, \widehat{A}^{\otimes k})$ and the sup-norm ${\textrm{Ban}}_k^{\infty}(\widehat{h}^A_i)$ on $H^0(\widehat{Y}, \widehat{A}^{\otimes k})$ induced by $\widehat{h}^A_i$.
	We denote by $\mathbb{B}_k[Y, \widehat{h}^A_i]$, $i = 0, 1$, the unit balls in $H^0(Y, A^{\otimes k})$ associated with these sup-norms.
	\begin{lem}\label{lem_bb_sing}
		For any upper semicontinuous semimetrics $\widehat{h}^A_0$, $\widehat{h}^A_1$ on $\widehat{A}$ verifying (\ref{eq_metr_a_equising}), and such that they are bounded from below over a non-pluripolar subset by a continuous metric, we have
		\begin{equation}\label{rem_thm_isom2}
			\lim_{k \to \infty}
			\frac{1}{k^{\kappa + 1}}
			\log\Big(
			\frac{\vol(\mathbb{B}_k[Y, \widehat{h}^A_0])}{\vol(\mathbb{B}_k[Y, \widehat{h}^A_1])}
			\Big)
			=
			\mathscr{E}(P[\widehat{h}^A_0]_*) - \mathscr{E}(P[\widehat{h}^A_1]_*).
		\end{equation}
	\end{lem}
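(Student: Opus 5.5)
Our plan is to reduce Lemma \ref{lem_bb_sing} to Theorem \ref{thm_volumes_ratio_bb} in two steps: first pass from $H^0(Y, A^{\otimes k})$ to $H^0(\widehat{Y}, \widehat{A}^{\otimes k})$ using Lemma \ref{lem_comp_rest}, and then pass from the upper semicontinuous semimetrics to continuous metrics by monotone approximation.

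\textbf{Step 1: from $Y$ to $\widehat{Y}$.} By (\ref{eq_h0ym_yhatm}), applied with $Y$, $A$ in place of $Y_m$, $A_m$, the subspace $E := H^0(Y, A^{\otimes k})$ of $V := H^0(\widehat{Y}, \widehat{A}^{\otimes k})$ has $\dim E / \dim V \geq 1 - \epsilon$ for $k$ large. The norms ${\textrm{Ban}}_k^{\infty}[Y](\widehat{h}^A_i)$ are exactly the restrictions to $E$ of ${\textrm{Ban}}_k^{\infty}(\widehat{h}^A_i)$. By (\ref{eq_metr_a_equising}), these two sup-norms satisfy the hypothesis of Lemma \ref{lem_comp_rest} with constant $C' = k \log C$. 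Since $\dim V = O(k^\kappa)$, the error term in (\ref{eq_comp_dist_two}) is $O(\epsilon k^{\kappa+1} + k^\kappa \log k)$. Dividing by $k^{\kappa+1}$ and letting $\epsilon \to 0$ shows that the limits over $Y$ and over $\widehat{Y}$ coincide, if either exists. So it suffices to prove the statement for the sup-norms on $\widehat{Y}$, i.e. Theorem \ref{thm_volumes_ratio_bb} for semimetrics.

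\textbf{Step 2: approximation of the semimetrics.} Since the $\widehat{h}^A_i$ are upper semicontinuous, we take continuous metrics $\widehat{h}^A_{i,j}$ decreasing to $\widehat{h}^A_i$ as $j \to \infty$. We may arrange that they keep the comparison $C^{-1}\widehat{h}^A_{0,j} \leq \widehat{h}^A_{1,j} \leq C\widehat{h}^A_{0,j}$, for instance by replacing $\widehat{h}^A_{1,j}$ by $\min(\widehat{h}^A_{1,j}, C\widehat{h}^A_{0,j})$ and then symmetrizing. Sup-norms are monotone in the metric, so ${\textrm{Ban}}_k^{\infty}(\widehat{h}^A_i) \leq {\textrm{Ban}}_k^{\infty}(\widehat{h}^A_{i,j})$ and the unit balls compare. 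This alone gives only one-sided information, so we need a sandwich.

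For a fixed $k$ and a finite-dimensional space, the sup over the compact $\widehat{Y}$ of $|s|_{\widehat{h}^A_{i,j}}$ decreases to the sup for $\widehat{h}^A_i$ by Dini-type upper semicontinuity. This convergence is not uniform in $k$, however, so it does not suffice directly.

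\textbf{Main obstacle.} The real difficulty is the uniformity in $k$. The plan is to compare both sides with the envelope. By (\ref{eq_fs_reg_env}) and the argument of Lemma \ref{lem_sup_y_hat}, the sup-norms for $\widehat{h}^A_i$ and for $P[\widehat{h}^A_i]_*$ agree, up to $e^{o(k)}$, on $V$. The reason is that $|s|^{2/k}_{\widehat{h}^A} \leq 1$ on the non-pluripolar complement propagates to $P[\widehat{h}^A]$, since $\frac1k\log|s|$ is psh and pluripolar sets are negligible.

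Concretely, $\sup|s|_{\widehat{h}^A_i} = \sup|s|_{P[\widehat{h}^A_i]_*}$ exactly, because $\log|s|$ is usc and psh. Then $P[\widehat{h}^A_{i,j}]$ decreases to $P[\widehat{h}^A_i]$ off a pluripolar set by Lemma \ref{lem_env_sing_vers}. This gives
\[
\mathscr{E}(P[\widehat{h}^A_{0,j}]) - \mathscr{E}(P[\widehat{h}^A_{1,j}]) \to \mathscr{E}(P[\widehat{h}^A_0]_*) - \mathscr{E}(P[\widehat{h}^A_1]_*)
\]
by Theorem \ref{thm_cont_begz2}, using that the integrand $\log$-ratio is bounded by $\log C$ and quasi-continuous.

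For the volumes, we use the monotonicity of the ratio under domination: the left-hand side of the lemma is sandwiched between its values for $(\widehat{h}^A_{0,j}, \widehat{h}^A_1)$ and for $(\widehat{h}^A_0, \widehat{h}^A_{1,j})$. The energy is monotone in each argument, so an iterated $\limsup/\liminf$ argument, together with Theorem \ref{thm_volumes_ratio_bb} applied to continuous pairs, closes the proof. The delicate point is to justify the mixed pairs, where one metric is continuous and the other only semicontinuous. We would handle this by cocycle additivity with a fixed continuous reference metric.
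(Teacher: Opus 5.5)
Your Step~1 is the same reduction the paper makes, and your energy convergence along continuous metrics decreasing to $\widehat h^A_i$ is fine. The gap is in the volume sandwich. Write $R_k(a,b) := k^{-\kappa-1}\log\big(\vol(\mathbb{B}_k[a])/\vol(\mathbb{B}_k[b])\big)$.

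In your sandwich $R_k(\widehat h^A_{0,j},\widehat h^A_1)\le R_k(\widehat h^A_0,\widehat h^A_1)\le R_k(\widehat h^A_0,\widehat h^A_{1,j})$, both outer terms still contain a semimetric that is only upper semicontinuous. So Theorem \ref{thm_volumes_ratio_bb} does not apply to them. Cocycle additivity with a continuous reference $h_{\mathrm{ref}}$ is circular: it splits each term into a continuous pair plus the pair $(\widehat h^A_i,h_{\mathrm{ref}})$, which is again the statement you are proving.

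The underlying problem is that continuous metrics decreasing to $\widehat h^A_i$ can only shrink $\mathbb{B}_k[\widehat h^A_i]$. They give $\liminf_k R_k(\widehat h^A_i,h_{\mathrm{ref}})\ge\mathscr{E}(P[\widehat h^A_i]_*)-\mathscr{E}(P[h_{\mathrm{ref}}])$, but never the matching $\limsup$. For the $\limsup$ you need continuous metrics whose unit balls contain $\mathbb{B}_k[\widehat h^A_i]$. Since $\widehat h^A_i$ may vanish on large sets, no continuous metric lies below it pointwise.

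The missing idea is to apply the tautological maximum principle on the other side. The correct form is ${\textrm{Ban}}_k^{\infty}(\widehat h^A)={\textrm{Ban}}_k^{\infty}(P[\widehat h^A])$. Your claimed exact equality with $P[\widehat h^A]_*$ fails in general, for instance if $\widehat h^A$ is multiplied by a large constant at a single point. Only $\sup|s|_{P[\widehat h^A]_*}\le\sup|s|_{\widehat h^A}$ survives, and that is all you need.

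Here is how it closes the argument:
\begin{enumerate}
\item Since $P[\widehat h^A]_*$ is lower semicontinuous, it is an increasing limit of continuous metrics $g_j\le P[\widehat h^A]_*\le P[\widehat h^A]$. Hence $\mathbb{B}_k[\widehat h^A]\subset\mathbb{B}_k[g_j]$.
\item Moreover $g_j\le P[g_j]\le P[\widehat h^A]_*$, so $P[g_j]$ increases to $P[\widehat h^A]_*$. This equals $P[\widehat h^A]$ off a pluripolar set.
\item Doing this for both semimetrics gives $R_k(\widehat h^A_{0,j},g_{1,j})\le R_k(\widehat h^A_0,\widehat h^A_1)\le R_k(g_{0,j},\widehat h^A_{1,j})$, with only continuous pairs on the outside.
\item Theorem \ref{thm_volumes_ratio_bb} computes the limits of the outer terms in $k$.
\item Theorem \ref{thm_cont_begz2} shows both limiting energies tend to $\mathscr{E}(P[\widehat h^A_0]_*)-\mathscr{E}(P[\widehat h^A_1]_*)$ as $j\to\infty$. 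Its hypotheses hold: the approximations are monotone, the limits have minimal singularities, and the log-ratios are uniformly bounded.
\end{enumerate}
This two-sided approximation is exactly how the paper proves the lemma.
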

	\begin{proof}
		Let us first reduce the problem to the comparison of balls on $H^0(\widehat{Y}, \widehat{A}^{\otimes k})$ instead of $H^0(Y, A^{\otimes k})$.
		Immediately from Lemma \ref{lem_comp_rest}, (\ref{eq_h0ym_yhatm}) and (\ref{eq_metr_a_equising}), following the notations of Theorem \ref{thm_volumes_ratio_bb}, we deduce that for any $\epsilon > 0$, there is $k_0 \in \nat$, such that for any $k \geq k_0$, we have
		\begin{equation}
			\Big|
			\log \Big( \frac{\vol(\mathbb{B}_k[Y, \widehat{h}^A_0])}{\vol(\mathbb{B}_k[Y, \widehat{h}^A_1])}
			\Big)
			-
			\log \Big( \frac{\vol(\mathbb{B}_k[\widehat{h}^A_0])}{\vol(\mathbb{B}_k[\widehat{h}^A_1])}
			\Big)
			\Big|
			\leq
			\epsilon k^{\kappa + 1}.
		\end{equation}
		From this, it suffices to show Theorem \ref{thm_volumes_ratio_bb} for $\widehat{h}^A_0$, $\widehat{h}^A_1$ as in Lemma \ref{lem_bb_sing}.
		\par 
		To see this, we fix an upper semicontinuous semimetric $\widehat{h}^A$ on $\widehat{A}$ which is bounded from below over a non-pluripolar subset by a continuous metric.
		Consider a sequence of continuous metrics $\widehat{h}^{A, 0}_i$ decreasing towards $\widehat{h}^A$, as $i \to \infty$ (such a sequence exists since $\widehat{h}^A$ is upper semicontinuous).
		Consider also an increasing sequence of continuous metrics $\widehat{h}^{A, 1}_i$, such that, as $i \to \infty$, $P[\widehat{h}^{A, 1}_i]$ increases towards $P[\widehat{h}^A]$ outside of a pluripolar subset.
		To see that such a sequence of metrics exists, consider the lower semicontinuous regularization $P[\widehat{h}^A]_*$.
		Our condition on the lower bound of $\widehat{h}^A$ implies by Lemma \ref{lem_env_sing_vers} that $P[\widehat{h}^A]_*$ has a psh potential. 
		Take $\widehat{h}^{A, 1}_i$ to be an increasing sequence of continuous metrics converging to $P[\widehat{h}^A]_*$ (such a sequence exists since $P[\widehat{h}^A]_*$ is lower semicontinuous).
		As $P[\widehat{h}^A] \geq P[\widehat{h}^{A, 1}_i] \geq \widehat{h}^{A, 1}_i$, and $P[\widehat{h}^A]_*$ coincides with $P[\widehat{h}^A]$ outside of a pluripolar subset, see \cite{BedfordTaylor}, the sequence $\widehat{h}^{A, 1}_i$ satisfies the above requirements.
		\par 
		Note that the tautological maximum principle, cf. \cite[Proposition 1.8]{BermanBouckBalls}, states that
		\begin{equation}
			{\textrm{Ban}}_k^{\infty}(\widehat{h}^A)
			=
			{\textrm{Ban}}_k^{\infty}(P[\widehat{h}^A]).
		\end{equation}
		This implies that the unit balls, $\mathbb{B}_k[\widehat{h}^{A, 0}_i]$, $\mathbb{B}_k[\widehat{h}^{A}]$, $\mathbb{B}_k[\widehat{h}^{A, 1}_i]$, associated with ${\textrm{Ban}}_k^{\infty}(\widehat{h}^{A, 0}_i)$, ${\textrm{Ban}}_k^{\infty}(\widehat{h}^A)$ and ${\textrm{Ban}}_k^{\infty}(\widehat{h}^{A, 1}_i)$, verify the following inclusions
		\begin{equation}\label{eq_e_cont_incr00}
			\mathbb{B}_k[\widehat{h}^{A, 0}_i] \subset \mathbb{B}_k[\widehat{h}^{A}] \subset \mathbb{B}_k[\widehat{h}^{A, 1}_i]
		\end{equation}
		\par
		Now, by applying Theorem \ref{thm_volumes_ratio_bb}, we deduce
		\begin{equation}\label{eq_e_cont_incr0}
			\lim_{k \to \infty}
			\frac{1}{k^{\kappa + 1}} \log \Big( \frac{\vol(\mathbb{B}_k[\widehat{h}^{A, 0}_i])}{\vol(\mathbb{B}_k[\widehat{h}^{A, 1}_i])}
			\Big)
			=
			\mathscr{E}(P[\widehat{h}^{A, 0}_i]) - \mathscr{E}(P[\widehat{h}^{A, 1}_i]).
		\end{equation}
		Note, however, that Theorem \ref{thm_cont_begz2} implies that
		\begin{equation}\label{eq_e_cont_incr}
			\lim_{i \to \infty}
			\mathscr{E}(P[\widehat{h}^{A, 0}_i]) - \mathscr{E}(P[\widehat{h}^{A, 1}_i])
			=
			0.
		\end{equation}
		Indeed, Theorem \ref{thm_cont_begz2} applies to each term in the sum (\ref{eq_diff_energy_a}) by the fact that $P[\widehat{h}^A_i]$ have potentials with minimal singularities. 
		By applying the above approximations for both semimetrics, $\widehat{h}^A_i$, $i = 0, 1$, we see by (\ref{eq_e_cont_incr00}), (\ref{eq_e_cont_incr0}) and (\ref{eq_e_cont_incr}) that Theorem \ref{thm_volumes_ratio_bb} for upper semicontinuous semimetrics reduces to Theorem \ref{thm_volumes_ratio_bb} for continuous metrics, which finishes the proof.
	\end{proof}
	The following technical corollary will be of great use later on.
	We conserve the notation introduced in Lemma \ref{lem_bb_sing}.
	\begin{lem}\label{lem_twist_rho}
		Let $\widehat{h}^A$ be an upper semicontinuous semimetric on $\widehat{A}$, which is bounded from below over a non-pluripolar subset by a continuous metric, and let $\nu : \widehat{Y} \to [0, 1]$ be a continuous function, so that $\nu^{-1}(0)$ is an analytic subvariety (or, more generally, a pluripolar subset).
		We denote by $\mathbb{B}_k[Y, \widehat{h}^A]$ (resp. $\mathbb{B}_k[Y, \nu, \widehat{h}^A]$) the unit ball on $H^0(Y, A^{\otimes k})$ associated with the sup-norm induced by $\widehat{h}^A$ (resp. the sup-norm induced by the semimetric $\nu \cdot (\widehat{h}^A)^{\otimes k}$).
		Then
		\begin{equation}\label{eq_twist_rho}
			\lim_{k \to \infty}
			\frac{1}{k^{\kappa + 1}}
			\log\Big(
			\frac{\vol(\mathbb{B}_k[Y, \widehat{h}^A])}{\vol(\mathbb{B}_k[Y, \nu, \widehat{h}^A])}
			\Big)
			=
			0.
		\end{equation}
	\end{lem}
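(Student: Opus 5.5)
Since $\nu \leq 1$, the semimetric $\nu \cdot (\widehat{h}^A)^{\otimes k}$ is dominated by $(\widehat{h}^A)^{\otimes k}$. Hence the associated sup-norms compare as ${\textrm{Ban}}$ with $\nu$ $\leq$ ${\textrm{Ban}}$ without it, which gives $\mathbb{B}_k[Y, \widehat{h}^A] \subset \mathbb{B}_k[Y, \nu, \widehat{h}^A]$. The logarithm in (\ref{eq_twist_rho}) is therefore nonpositive, and the $\limsup$ is at most $0$. It remains to prove that the $\liminf$ is at least $0$.

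For the lower bound, fix $\delta \in ]0,1[$ and compare with the semimetric $\widehat{h}^A_\delta := \max(\nu, \delta)^{1/k_1}$-twists. More concretely, I would use the following trick. For any $p \in \nat^*$, the function $\nu^{1/p}$ is continuous with values in $[0,1]$. Moreover, $\nu \cdot (\widehat{h}^A)^{\otimes k} \geq (\nu^{1/p}\widehat{h}^A)^{\otimes k}$ whenever $k \geq p$, because $\nu \geq \nu^{k/p}$. This gives $\mathbb{B}_k[Y, \nu, \widehat{h}^A] \subset \mathbb{B}_k[Y, \nu^{1/p}\widehat{h}^A]$, the unit ball for the sup-norm of the genuine semimetric $\widehat{h}^A_p := \nu^{1/p} \cdot \widehat{h}^A$.

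The semimetric $\widehat{h}^A_p$ is upper semicontinuous. It is bounded from below by a continuous metric over a non-pluripolar set: we can take $E \cap \{\nu \geq 1/l\}$ for suitable $l$, which is non-pluripolar because $\nu^{-1}(0)$ is pluripolar, exactly as in the proof of Lemma \ref{lem_bnd_below_mettr}. However, it does not satisfy (\ref{eq_metr_a_equising}) with respect to $\widehat{h}^A$. To circumvent this, I replace $\nu$ by $\nu_\delta := \max(\nu, \delta)$. Since $\nu \geq \nu_\delta \cdot \mathbb{1}_{\{\nu \geq \delta\}}$, I would instead first shrink the set. Use that the sup-norms for $\nu^{1/p}\widehat{h}^A$ dominate those for $\widehat{h}^A$ restricted to $U_\delta := \{\nu \geq \delta\}$, up to the factor $\delta^{k/p}$, and apply Lemma \ref{lem_bb_sing} to the pair $\widehat{h}^A$ and $\delta^{1/p}\widehat{h}^A \cdot \mathbb{1}_{U_\delta}$. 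Here the indicator turns $\widehat{h}^A$ into the semimetric vanishing off $U_\delta$; this is still upper semicontinuous since $U_\delta$ is closed. The factor $\delta^{1/p}$ contributes exactly $-\frac{\kappa}{2p}\log\delta$-type terms of order $\frac{|\log \delta|}{p}\,{\rm vol}(A)$ to the energy difference, by scaling. The indicator contributes $\mathscr{E}(P[\widehat{h}^A]_*) - \mathscr{E}(P[\mathbb{1}_{U_\delta}\widehat{h}^A]_*)$.

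The main obstacle is the equisingularity condition (\ref{eq_metr_a_equising}) required in Lemma \ref{lem_bb_sing}, which fails for $\mathbb{1}_{U_\delta}\widehat{h}^A$. I would handle this by going back to the proof of Lemma \ref{lem_bb_sing}: the comparison is reduced there to Theorem \ref{thm_volumes_ratio_bb} of Berman-Boucksom, which is formulated for weighted compact sets and allows sup over subsets. Alternatively, I would prove the one-sided inequality directly using Lemma \ref{lem_comp_rest}-free monotone arguments.

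Then let $\delta \to 0$. The sets $U_\delta$ increase to the complement of the pluripolar set $\nu^{-1}(0)$, so $P[\mathbb{1}_{U_\delta}\widehat{h}^A]_*$ decreases to $P[\widehat{h}^A]_*$ outside a pluripolar set. Indeed, the envelope ignores pluripolar sets, and this follows by the argument of Lemma \ref{lem_env_sing_vers}. Theorem \ref{thm_cont_begz2} then gives convergence of the energy difference to $0$. Finally choose $p$ large, so that $|\log\delta|/p$ is small, giving $\liminf \geq -\epsilon$ for every $\epsilon>0$.
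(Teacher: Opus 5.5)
Your argument is correct and is essentially the paper's proof. The paper also passes to the closed superlevel sets $\{\nu \geq \delta\}$ (with $\delta = 1/i$), applies Lemma \ref{lem_bb_sing} to the pair $\widehat{h}^A$, $1_{\{\nu \geq \delta\}} \cdot \widehat{h}^A$, and lets $\delta \to 0$ via Theorem \ref{thm_cont_begz2}; it does so without commenting on (\ref{eq_metr_a_equising}), and what is really needed there is only that the two sup-norms agree up to $e^{O(k)}$ (a Bernstein--Walsh type bound, since $\widehat{h}^A$ is bounded below by a continuous metric on a non-pluripolar subset of $\{\nu \geq \delta\}$ for small $\delta$) and that both envelopes have minimal singularities. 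Your $\nu^{1/p}$ detour is harmless but unnecessary: $\nu$ enters only to the first power, so directly $\mathbb{B}_k[Y, \nu, \widehat{h}^A] \subset \delta^{-1} \cdot \mathbb{B}_k[Y, 1_{\{\nu \geq \delta\}} \cdot \widehat{h}^A]$, at a cost of only $O(k^{\kappa} |\log \delta|)$ in the log-volume.
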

	\begin{proof}
		Let us consider the subsets $E_i := \{ y \in \widehat{Y} : \nu(y) \geq 1/i \}$ and the sequence of semimetrics $\widehat{h}^A_i := 1_{E_i} \cdot \widehat{h}^A$, where $1_{E_i}$ is the indicator function.
		Then, as $i \to \infty$, $\widehat{h}^A_i$ increases towards $\widehat{h}^A$.
		By repeating the argument from the proof of Lemma \ref{lem_bnd_below_mettr}, we see also that there is $i_0 \in \nat$, such that for any $i \geq i_0$, the semimetric $\widehat{h}^A_i$ is bounded from below over a non-pluripolar subset by a continuous metric.
		Note also that $\widehat{h}^A_i$ is upper semicontinuous as $E_i$ are closed by the continuity of $\nu$.
		\par 
		For every $i \in \nat$, we obviously have
		\begin{equation}
			\mathbb{B}_k[Y, \widehat{h}^A]
			\subset
			\mathbb{B}_k[Y, \nu, \widehat{h}^A]
			\subset
			i \cdot \mathbb{B}_k[Y, \widehat{h}^A_i].
		\end{equation}
		This along with the fact that $\dim H^0(Y, A^{\otimes k}) \leq C k^{\kappa}$ for $k \in \nat$ large enough, implies the following
		\begin{equation}\label{rem_thm_isom300}
			\limsup_{k \to \infty}
			\frac{1}{k^{\kappa + 1}}
			\Big|\log
			\Big(
			\frac{\vol(\mathbb{B}_k[Y, \widehat{h}^A])}{\vol(\mathbb{B}_k[Y, \nu, \widehat{h}^A])}\Big) \Big|
			\leq
			\limsup_{k \to \infty}
			\frac{1}{k^{\kappa + 1}}
			\Big|
			\log\Big(
			\frac{\vol(\mathbb{B}_k[Y, \widehat{h}^A])}{\vol(\mathbb{B}_k[Y, \widehat{h}^A_i])}\Big)
			\Big|.
		\end{equation}
		\par 
		However, applying Lemma \ref{lem_bb_sing} yields 
		\begin{equation}\label{rem_thm_isom3}
			\lim_{k \to \infty}
			\frac{1}{k^{\kappa + 1}}
			\log\Big(
			\frac{\vol(\mathbb{B}_k[Y, \widehat{h}^A])}{\vol(\mathbb{B}_k[Y, \widehat{h}^A_i])}\Big)
			=
			\mathscr{E}(P[\widehat{h}^A]_*) - \mathscr{E}(P[\widehat{h}^A_i]_*).
		\end{equation}
		Moreover, since $\widehat{h}^A_i$ increase towards $\widehat{h}^A$, as $i \to \infty$, we deduce by Lemma \ref{lem_env_sing_vers} that $P[\widehat{h}^A_i]_*$ increase towards $P[\widehat{h}^A]_*$ outside of a pluripolar subset.
		Then, exactly as in (\ref{eq_e_cont_incr}), we deduce
		\begin{equation}\label{eq_e_cont_incr2}
			\lim_{i \to \infty}
			\mathscr{E}(P[\widehat{h}^A]_*) - \mathscr{E}(P[\widehat{h}^A_i]_*)
			=
			0.
		\end{equation}
		A combination of (\ref{rem_thm_isom300}), (\ref{rem_thm_isom3}) and (\ref{eq_e_cont_incr2}) easily yields the proof.
	\end{proof}
	\par 
	Let us now relate the relative Monge-Ampère energy, (\ref{eq_diff_energy}), with the relative Monge-Ampère $\kappa$-energy, (\ref{eq_diff_energy_a}).
	We fix two continuous metrics $h^L_0$ and $h^L_1$ on $L$, denote by $\widehat{h}^L_0$, $\widehat{h}^L_0$, their pullbacks to $\widehat{h}^L$, and by $\widehat{\varphi}_{m, *} (\widehat{K}, \widehat{h}^L_0)$, $\widehat{\varphi}_{m, *} (\widehat{K}, \widehat{h}^L_1)$ the semimetrics on $\widehat{A}_m$ induced by them as in (\ref{eq_max_metric}).
	\begin{lem}\label{lem_ma_kappa_ma_usual}
		The following relation between the relative Monge-Ampère energy and the relative Monge-Ampère $\kappa$-energy holds
		\begin{multline}\label{eq_lem_ma_kappa_ma_usual}
			\mathscr{E}_{\kappa}(P[W, K, h^L_0]) - \mathscr{E}_{\kappa}(P[W, K, h^L_1])
			\\
			=
			\lim_{m \to \infty}
			\frac{1}{m^{\kappa(W) + 1}}
			\cdot
			\Big(
			\mathscr{E}(P[\widehat{\varphi}_{m, *} (\widehat{K}, \widehat{h}^L_0)]_*) - \mathscr{E}(P[\widehat{\varphi}_{m, *} (\widehat{K}, \widehat{h}^L_1)]_*)
			\Big).
		\end{multline}
	\end{lem}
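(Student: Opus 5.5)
The plan is to pull the energy on $\widehat{Y}_m$ back to $\widehat{X}_m$ term by term, compare it there with the $\kappa$-energy of the quantized envelopes $P[W_m, K, h^L_i]$, and then let $m \to \infty$ using Theorem \ref{prop_two_quant_sch}.

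\textbf{Step 1: pull back one term.} Write $\psi_{m,i} := P[\widehat{\varphi}_{m, *} (\widehat{K}, \widehat{h}^L_i)]_*$, $i = 0, 1$, and put $\kappa := \kappa(W)$. By the definition (\ref{eq_hlm_defn}),
\begin{equation}
	\pi_m^* P[W_m, K, h^L_i]^m = \widehat{\varphi}_m^* \psi_{m,i} \cdot h^{E_m}_{\rm{sing}}.
\end{equation}
The factor $h^{E_m}_{\rm{sing}}$ cancels in the ratio, so
\begin{equation}
	\log\big( P[W_m, K, h^L_1]/P[W_m, K, h^L_0] \big) = \tfrac{1}{m}\, \widehat{\varphi}_m^* \log( \psi_{m,1}/\psi_{m,0}).
\end{equation}
The non-pluripolar product does not charge $E_m$. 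As in (\ref{eq_form_pullback}), this gives
\begin{equation}
	m^{\kappa}\, c_1(L, P[W_m, K, h^L_0])^j \wedge c_1(L, P[W_m, K, h^L_1])^{\kappa - j} = \widehat{\varphi}_m^*\big( c_1(\widehat{A}_m, \psi_{m,0})^j \wedge c_1(\widehat{A}_m, \psi_{m,1})^{\kappa-j} \big)
\end{equation}
off $E_m$, and hence on $\widehat{X}_m$.

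\textbf{Step 2: apply the projection formula.} Next I apply Remark \ref{rem_zero_prod} b), the mixed version of Proposition \ref{prop_proj_fla}, to $\widehat{\varphi}_m$, with $\chi := \log(\psi_{m,1}/\psi_{m,0})$ and $\omega_0 := \pi_m^*\omega$. I use (\ref{eq_bir_fib_integg}) to replace the fiber integral by $\varphi_*[\omega]^{n-\kappa}$. The result is an exact identity for every $m$:
\begin{equation}
	\mathscr{E}_{\kappa}(P[W_m, K, h^L_0]) - \mathscr{E}_{\kappa}(P[W_m, K, h^L_1]) = \frac{1}{m^{\kappa+1}}\Big( \mathscr{E}(\psi_{m,0}) - \mathscr{E}(\psi_{m,1}) \Big),
\end{equation}
where the left side is defined by formula (\ref{eq_diff_energy}) with $P[W, K, h^L_i]$ replaced by $P[W_m, K, h^L_i]$.

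Two hypotheses of Proposition \ref{prop_proj_fla} need checking.
\begin{itemize}
\item \emph{$\chi$ is bounded and quasi-continuous.} Since $h^L_0$ and $h^L_1$ are continuous, $C^{-1} h^L_0 \le h^L_1 \le C h^L_0$. Hence $\widehat{\varphi}_{m,*}(\widehat{K},\widehat{h}^L_1)$ and $\widehat{\varphi}_{m,*}(\widehat{K},\widehat{h}^L_0)$ are comparable up to the factor $C^{m}$, and so are their envelopes. Therefore $\chi$ is bounded by $m\log C$. It is quasi-continuous as a difference of qpsh functions.
\item \emph{The potentials need not be bounded.} Proposition \ref{prop_proj_fla} assumes bounded potentials, while $\psi_{m,i}$ only have minimal singularities in the big class $[\widehat{A}_m]$. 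I would either rerun the proof of Proposition \ref{prop_proj_fla} with Theorem \ref{thm_cont_begz2}, which only requires minimal singularities, or use the mixed-product analogue of Lemma \ref{lem_env_sing_vers} directly.
\end{itemize}

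\textbf{Step 3: let $m \to \infty$.} Take $m = 2^k$. By Lemma \ref{lem_decr_quant_sch} the metrics $P[W_m, K, h^L_i]$ decrease, i.e. their potentials increase. By Theorem \ref{prop_two_quant_sch} they converge to $P[W, K, h^L_i]$ outside a pluripolar set. The functions $\log(P[W_m,K,h^L_1]/P[W_m,K,h^L_0])$ are quasi-continuous and uniformly bounded by $\log C$. Increasing sequences of qpsh functions converge in capacity, so these differences also converge in capacity. Theorem \ref{thm_cont_begz} then gives convergence of each term of (\ref{eq_diff_energy}), which yields (\ref{eq_lem_ma_kappa_ma_usual}) along $m = 2^k$.

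\textbf{Main obstacle.} The statement has a limit over all $m$, not only over $m = 2^k$. For general $m$ the potentials are not monotone, so Theorem \ref{thm_cont_begz} does not apply directly. I would first handle the subsequence $m = 2^k$ as above. For arbitrary $m$, I would try to reach the same limit by comparing $P[W_m,\cdot]$ with $P[W_{m'},\cdot]$ for $m \mid m'$ through (\ref{eq_hakm_ham}), combined with the monotonicity formula, Theorem \ref{thm_wn_monot}. Failing that, I would prove (and use) the lemma only along divisible sequences, which suffices for Theorem \ref{thm_volumes_ratio}.
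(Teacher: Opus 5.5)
Your argument is essentially the paper's proof. It uses the same pull-back identities off $E_m$, then Proposition \ref{prop_proj_fla} with Remark \ref{rem_zero_prod} and (\ref{eq_bir_fib_integg}) to get an exact identity for each $m$, then Theorem \ref{thm_cont_begz} along the sequence made monotone by Lemma \ref{lem_decr_quant_sch} and convergent by Theorem \ref{prop_two_quant_sch}. The paper, too, only has monotonicity along divisibility chains such as $m = 2^k$, and that is all Theorem \ref{thm_volumes_ratio} needs. The full limit over all $m$ can be recovered afterwards: Theorem \ref{thm_sym_subalgebra}, Lemma \ref{lem_comp_rest} and Lemma \ref{lem_bb_sing} hold for every $m \geq m_0$, so once Theorem \ref{thm_volumes_ratio} is proved, every term with $m \geq m_0$ is close to the limit.

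Your worry about unbounded potentials is unnecessary. By (\ref{eq_triv_bnd_sup1}) and the definition of the envelope, $P[\widehat{\varphi}_{m, *} (\widehat{K}, \widehat{h}^L_i)]_* \leq \widehat{h}^A_m$ with $\widehat{h}^A_m$ continuous; equivalently, $\widehat{A}_m = p_m^* A_m$ is globally generated, so minimal singularities means bounded. Hence these potentials are bounded and Proposition \ref{prop_proj_fla} applies as stated.
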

	\begin{rem}\label{rem_ma_kappa_ma_usual}
		a) Since the right-hand side of (\ref{eq_lem_ma_kappa_ma_usual}) is independent of $\omega$, the left-hand side must also be independent of $\omega$.
		\par 
		b) Note that the relative Monge-Ampère energy is monotonic and verifies the cocycle property, cf. \cite[Proposition 10.28]{GuedjZeriahBook}.
		By (\ref{eq_lem_ma_kappa_ma_usual}), the same continues to hold for the relative Monge-Ampère $\kappa$-energy.
		In particular, for any continuous metrics $h^L_i$, $i = 0, 1, 2$, we have
		\begin{multline}
			\mathscr{E}_{\kappa}(P[W, K, h^L_0]) - \mathscr{E}_{\kappa}(P[W, K, h^L_2])
			=
			\big(\mathscr{E}_{\kappa}(P[W, K, h^L_0]) - \mathscr{E}_{\kappa}(P[W, K, h^L_1])\big)
			\\
			+
			\big(\mathscr{E}_{\kappa}(P[W, K, h^L_1]) - \mathscr{E}_{\kappa}(P[W, K, h^L_2])\big),
		\end{multline}
		and so it is natural to regard the relative Monge-Ampère $\kappa$-energy as a difference.
		\par 
		While it won't be used in what follows, we nevertheless note that by the same reasons, the relative Monge-Ampère $\kappa$-energy is concave in the sense that the function 
		\begin{equation}
			t \mapsto \mathscr{E}_{\kappa}(P[W, K, h^L_0 \cdot \exp(tf) ]) - \mathscr{E}_{\kappa}(P[W, K, h^L_0]),
		\end{equation}
		is concave for any continuous $f$.
	\end{rem}
	\begin{proof}
		We denote by $P[W_m, K, h^L_i]$, $i = 0, 1$ the singular metrics on $L$ defined as in (\ref{eq_hlm_defn}) but for $h^L_i$ instead of $h^L$.
		Then immediately from the definitions and the fact that the nonpluripolar product doesn't put mass on analytic subsets, we have
		\begin{multline}
		\begin{aligned}
			&
			\widehat{\varphi}_m^* \Big( c_1(\widehat{A}_m, P[\widehat{\varphi}_{m, *} (\widehat{K}, \widehat{h}^L_0)]_*)^i \wedge c_1(\widehat{A}_m, P[\widehat{\varphi}_{m, *} (\widehat{K}, \widehat{h}^L_1)]_*)^{\kappa(W) - i} \Big) \wedge \omega^{n - \kappa(W)}
			\\
			&
			\qquad \qquad 
			=
			m^{\kappa(W)} \cdot c_1(L, P[W_m, K, h^L_0])^i \wedge c_1(L, P[W_m, K, h^L_1])^{\kappa(W) - i}  \wedge \omega^{n - \kappa(W)},
			\\
			&
			\widehat{\varphi}_m^*
			\log\Big( \frac{P[\widehat{\varphi}_{m, *} (\widehat{K}, \widehat{h}^L_1)]_*}{P[\widehat{\varphi}_{m, *} (\widehat{K}, \widehat{h}^L_0)]_*} \Big)
			=
			m
			\cdot
			\log\Big( \frac{P[W_m, K, h^L_1]}{P[W_m, K, h^L_0]} \Big).
		\end{aligned}
		\end{multline}
		From this, Proposition \ref{prop_proj_fla} and Remark \ref{rem_zero_prod}, we deduce
		\begin{multline}
			\frac{1}{m^{\kappa(W) + 1}}
			\cdot
			\Big(
			\mathscr{E}(P[\widehat{\varphi}_{m, *} (\widehat{K}, \widehat{h}^L_0)]_*) - \mathscr{E}(P[\widehat{\varphi}_{m, *} (\widehat{K}, \widehat{h}^L_1)]_*)
			\Big)
			\\
			=
			\frac{1}{2(\kappa(W) + 1) \cdot \varphi_{*} [\omega]^{n - \kappa(W)}}
			\sum_{i = 0}^{\kappa(W)} \int_{X} \log\Big( \frac{P[W_m, K, h^L_1]}{P[W_m, K, h^L_0]} \Big)
			\cdot
			\\
			 \cdot c_1(L, P[W_m, K, h^L_0])^i \wedge c_1(L, P[W_m, K, h^L_1])^{\kappa(W) - i}  \wedge \omega^{n - \kappa(W)}.
		\end{multline}
		Now, remark that by (\ref{eq_metr_a_equising}), there is $C > 1$ such that for any $m \in \nat$, we have
		\begin{equation}
			C^{-1}
			\le
			\frac{P[W_m, K, h^L_1]}{P[W_m, K, h^L_0]}
			\leq
			C.
		\end{equation}
		Note also that since the singular metrics $P[W_m, K, h^L_i]$, $i = 0, 1$ have psh potentials, the function $\log( \frac{P[W_m, K, h^L_1]}{P[W_m, K, h^L_0]})$ is quasi-continuous by the discussion before Theorem \ref{thm_cont_begz}.
		From this, Lemmas \ref{lem_bnd_below_mettr}, \ref{lem_decr_quant_sch} and Theorem \ref{prop_two_quant_sch}, we can apply Theorem \ref{thm_cont_begz}, which yields Lemma \ref{lem_ma_kappa_ma_usual}.
	\end{proof}
	
	\begin{proof}[Proof of Theorem \ref{thm_volumes_ratio}]
		First of all, for any $m \in \nat^*$, we denote by $\mathbb{B}_k[[{\rm{Sym}} W_m], K, h^L_i]$, $i = 0, 1$, the restriction of the unit ball $\mathbb{B}_{km}[W, K, h^L_0]$ on the subspace $[{\rm{Sym}}^k W_m] \subset W_{km}$.
		Then by Theorem \ref{thm_sym_subalgebra} and Lemma \ref{lem_comp_rest}, we deduce that for any $\epsilon > 0$, there are $m \in \nat^*$, $k_0 \in \nat$, such that for any $k \geq k_0$, we have
		\begin{equation}\label{eq_thm_volumes_ratio_0}
			\Big|
			\log\Big(
			\frac{\vol(\mathbb{B}_{km}[W, K, h^L_0])}{\vol(\mathbb{B}_{km}[W, K, h^L_1])}
			\Big)
			-
			\log\Big(
			\frac{\vol(\mathbb{B}_k[[{\rm{Sym}} W_m], K, h^L_0])}{\vol(\mathbb{B}_k[[{\rm{Sym}} W_m], K, h^L_1])}
			\Big)
			\Big|
			\leq
			\epsilon k^{\kappa + 1}.
		\end{equation}
		Note, however, that taken into account the identifications (\ref{eq_sym_yk_rel}) and (\ref{eq_kod_pullback_sup}), we deduce that if we denote by $\mathbb{B}_k[Y_m, \widehat{\varphi}_{m, *} (\widehat{K}, \widehat{h}^L_i)]$, $i = 0, 1$, the unit balls in $H^0(Y, A^{\otimes k})$ associated with the pullbacks $\widehat{h}^L_i$ on $\widehat{L}$ of the metrics $h^L_i$ as in Lemma \ref{lem_bb_sing}, then
		\begin{equation}\label{eq_thm_volumes_ratio_1}
			\log\Big(\frac{\vol(\mathbb{B}_k[[{\rm{Sym}} W_m], K, h^L_0])}{\vol(\mathbb{B}_k[[{\rm{Sym}} W_m], K, h^L_1])}\Big)
			=
			\log\Big(
			\frac{\vol(\mathbb{B}_k[Y_m, \widehat{\varphi}_{m, *} (\widehat{K}, \widehat{h}^L_0)])}{\vol(\mathbb{B}_k[Y_m, \widehat{\varphi}_{m, *} (\widehat{K}, \widehat{h}^L_1)])}
			\Big).
		\end{equation}
		Combining Lemmas \ref{lem_bnd_below_mettr}, \ref{lem_bb_sing}, \ref{lem_ma_kappa_ma_usual}, (\ref{eq_thm_volumes_ratio_0}) and (\ref{eq_thm_volumes_ratio_1}), we see that for any $\epsilon > 0$, there are $m \in \nat^*$, $k_0 \in \nat$, such that for any $k \geq k_0$, we have
		\begin{equation}\label{eq_thm_volumes_ratio_1000}
			\Big|
			\frac{1}{(km)^{\kappa(W) + 1}}
			\log\Big(\frac{\vol(\mathbb{B}_{km}[W, K, h^L_0])}{\vol(\mathbb{B}_{km}[W, K, h^L_1])}
			\Big)
			-
			\big(
			\mathscr{E}_{\kappa}(P[W, K, h^L_0]) - \mathscr{E}_{\kappa}(P[W, K, h^L_1])
			\big)
			\Big|
			\leq
			\epsilon.
		\end{equation}
		Let us now establish that the above estimate continues to hold if $mk$ is replaced by $k$.
		\par 
		For this, we fix $r \in \nat$ so that there is a non-zero section $s \in W_r$.
		Define the following functions $\nu_i(x) := |s(x)|_{(h^L_i)^k}$, $i = 0, 1$.
		Consider the embedding 
		\begin{equation}
			j_k: [{\rm{Sym}}^k W_m] \to W_{mk + r}, \qquad s \mapsto s \cdot s_r.
		\end{equation}
		Note that, taken into account the identifications (\ref{eq_sym_yk_rel}) and (\ref{eq_kod_pullback_sup}), the restriction of the unit ball $\mathbb{B}_{km + r}[W, K, h^L_i]$ to the subspace $j_k([{\rm{Sym}}^k W_m])$, is given -- in the notations of Lemma \ref{lem_twist_rho} -- by $\mathbb{B}_k[Y_m, \nu_i, \widehat{\varphi}_{m, *} (\widehat{K}, \widehat{h}^L_0)]$.
		From this, Theorem \ref{thm_sym_subalgebra} and Lemmas \ref{lem_comp_rest}, \ref{lem_twist_rho}, we deduce the following version of (\ref{eq_thm_volumes_ratio_0}) and (\ref{eq_thm_volumes_ratio_1}): for any $\epsilon > 0$, there are $m \in \nat^*$, $k_0 \in \nat$, such that for any $k \geq k_0$,
		\begin{equation}\label{eq_thm_volumes_ratio_0nn}
			\Big|
			\log\Big(\frac{\vol(\mathbb{B}_{km + r}[W, K, h^L_0])}{\vol(\mathbb{B}_{km + r}[W, K, h^L_1])}
			\Big)
			-
			\log\Big(
			\frac{\vol(\mathbb{B}_k[Y_m, \widehat{\varphi}_{m, *} (\widehat{K}, \widehat{h}^L_0)])}{\vol(\mathbb{B}_k[Y_m, \widehat{\varphi}_{m, *} (\widehat{K}, \widehat{h}^L_1)])}
			\Big)
			\Big|
			\leq
			\epsilon k^{\kappa + 1}.
		\end{equation}
		By repeating the argument preceding (\ref{eq_thm_volumes_ratio_1000}), we obtain that (\ref{eq_thm_volumes_ratio_1000}) holds with $mk + r$ in place of $mk$.
		Since our linear series satisfies (\ref{eq_lin_series_ndg}), this applies to $r$ representing all residues modulo $m$. 
		Therefore, (\ref{eq_thm_volumes_ratio_1000}) holds for $k$ instead of $mk$, completing the proof.
 	\end{proof}

	\section{Asymptotics of the partial Bergman kernels}\label{sect_pbk}
	The main objective of this section is to investigate two closely related problems: the differentiability of the relative Monge-Ampère $\kappa$-energy and the asymptotic behavior of the partial Bergman kernels.
	More precisely, in Section \ref{sect_no_weak} we present an explicit example showing that partial Bergman kernels may diverge, and that the relative Monge-Ampère $\kappa$-energy may fail to be differentiable, even though both properties are known to hold in the case of complete linear series on big line bundles.
	Then, in Section \ref{sect_different}, we prove a partial differentiability result for the relative Monge-Ampère $\kappa$-energy and use it to deduce Theorem \ref{thm_part_bk_get}.
	
	\subsection{An example of divergent partial Bergman kernels}\label{sect_no_weak}
	The main goal of this section is to show that partial Bergman kernels do not converge in general, and that the relative Monge-Ampère $\kappa$-energy fails to be differentiable in general.
	This will be achieved by providing two closely related counterexamples.
	\par 
	\textbf{Example.} Divergence of the partial Bergman kernel.
	\par 
	We consider the projective space $X := \mathbb{P}^1$ and the hyperplane bundle $L = \mathscr{O}(1)$. 
	We view $\mathbb{C}$ as an affine chart in $\mathbb{P}^1 = \mathbb{C} \cup \{\infty\}$ with the standard holomorphic coordinate $z$, and consider a non-zero section $\sigma \in H^0(X, L)$ vanishing at $\infty$. 
	The division by $\sigma^{k}$ gives an isomorphism between $H^0(X, L^{\otimes k})$ and the space of polynomials of degree $\leq k$ in $z$. 
	We endow $L$ with a continuous metric $h^L$, such that 
	\begin{equation}
		|\sigma(x)|_{h^L} = \frac{|x| + 1}{2}, \text{ for } x \in \mathbb{D}.
	\end{equation}
	Define the envelope, $P[\mathbb{D}, h^L]$, as
	\begin{equation}
		P[\mathbb{D}, h^L] 
		:=
		\inf 
		\Big\{
			h^L_0 : h^L_0 \geq h^L \text{over $\mathbb{D}$ and } h^L_0 \text{ has a psh pential}
		\Big\}.
	\end{equation}
	Clearly, it verifies the following inequality
	\begin{equation}
		P[\mathbb{D}, h^L] 
		\geq
		P[\mathbb{S}^1, h^L],
	\end{equation}
	where $P[\mathbb{S}^1, h^L]$ is defined analogously to $P[\mathbb{D}, h^L]$.
	A classical calculation, cf. \cite[Exercise 4.13]{GuedjZeriahBook}, shows that the metric $h^L_0 := P[\mathbb{S}^1, h^L]$ verifies 
	\begin{equation}
		|\sigma(x)|_{h^L_0} = 1, \text{ for } x \in \mathbb{D}.
	\end{equation}
	Note that since $h^L_0 \geq h^L$ over $\mathbb{D}$, we deduce that $P[\mathbb{D}, h^L] = h^L_0$. 
	Hence the contact set $\{ x \in X : P[\mathbb{D}, h^L] = h^L \}$ is given by the unit circle $\mathbb{S}^1 \subset \comp$.
	Since the measure $c_1(L, P[\mathbb{D}, h^L])$ is supported on the contact set, cf. \cite[Exercise 5.10]{GuedjZeriahBook}, by the $\mathbb{S}^1$-symmetry, we obtain that the equilibrium measure associated with $(\mathbb{D}, h^L)$ is given by the Lebesgue measure on $\mathbb{S}^1 \subset \comp$.
	By \cite{BloomLeven1} or \cite{BerBoucNys}, we see that if we denote by $\lambda$ the Lebesgue measure on $\mathbb{D} \subset \comp$, and by $B_k$ the Bergman kernel on $(X, L)$ associated with $(\lambda, h^L)$, then the sequence of probability measures $\frac{1}{k + 1} B_k(z,z) d \lambda(z)$ on $X$ converges weakly, as $k \to \infty$, to the Lebesgue measure on the unit circle $\mathbb{S}^1 \subset \comp$.
	\par 
	In particular, we can find a sequence $a_i, b_i \in ]0, 1[$, $i \in \nat$, verifying $a_i < b_i < a_{i + 1}$, so that if we denote $A_i = \{ z \in \mathbb{D}: a_i < |z| < b_i \}$, $C_i = \{ z \in \mathbb{D}: b_i < |z| < a_{i + 1} \}$, then for certain increasing subsequences $\alpha_i, \beta_i \in \nat$, we have
	\begin{equation}
		\frac{1}{\alpha_i + 1}
		\int_{A_i} B_{\alpha_i}(z,z) d \lambda(z) \geq \frac{2}{3},
		\qquad 
		\frac{1}{\beta_i + 1}
		\int_{C_i} B_{\beta_i}(z,z) d \lambda(z) \geq \frac{2}{3}.
	\end{equation}
	Using the indicator function $1_{A_i} : \comp \to \{0, 1\}$ of $A_i$, we define the function $g : D \to \{0, 1\}$ as 
	\begin{equation}
		g = \sum_{i = 1}^{+\infty} 1_{A_i}.
	\end{equation}
	\par 
	Now, let us construct a linear series for which partial Bergman kernels diverge.
	Consider the manifold $X = \mathbb{P}^1 \sqcup \mathbb{P}^1$, endowed with a line bundle $L$, which restricts to $\mathscr{O}(1)$ on each of the components.
	Let $\pi : X \to \mathbb{P}^1$ be the natural projection.
	Define $W_k \subset H^0(X, L^{\otimes k})$ as
	\begin{equation}\label{eq_pull_bc_prj_space}
		W_k := \pi^* H^0(\mathbb{P}^1, \mathscr{O}(1)^{\otimes k}).
	\end{equation}
	\par 
	Let us consider the following measure $\mu$ on $X$.
	We denote by $\mu_i$, $i = 0, 1$, the restriction of $\mu$ to each of the $\mathbb{P}^1$-components.
	Define $\mu$ so that 
	\begin{equation}
		\mu_1 := \frac{2 + g}{4} \cdot d \lambda, \qquad \mu_2 := \frac{2 - g}{4} \cdot d \lambda,
	\end{equation}
	where $\lambda$ is the Lebesgue measure on the unit disc $\mathbb{D} \subset \comp \subset \mathbb{P}^1$.
	We claim that the measure $\mu$ satisfies the Bernstein-Markov property.
	\par 
	To see this, it is enough to verify the property on each component individually. 
	On each component, $\mu$ is bounded below by the measure $\frac{1}{4} \lambda$, which is known to be Bernstein-Markov, cf. \cite{KlimekBook}. 
	It is also bounded from above by $\lambda$, which is a probability measure.
	Therefore, the Bernstein-Markov property for $\mu$ follows immediately.
	\par 
	We claim that the partial Bergman kernel associated with $W$, $\mu$ and $h^L$ does not converge weakly.
	To verify this, note that since we have $\pi_* \mu = \lambda$, the associated partial Bergman kernel, $B_k[W, \mu, h^L](x, x)$ writes as 
	\begin{equation}\label{eq_part_berg_expl_cal}
		B_k[W, \mu, h^L](x, x)
		=
		B_k(\pi(x), \pi(x)).
	\end{equation}
	Let us pick the (continuous) function $f$ on $X$, which is given by the indicator function on the first $\mathbb{P}^1$-component.
	Then we clearly have
	\begin{multline}
		\int_X f(x) \cdot B_k[W, \mu, h^L](x, x) \cdot d \mu(x)
		=
		\frac{1}{4}
		\int_{\mathbb{D}} (2 + g(x)) \cdot B_k(x, x) \cdot d \lambda(x)
		\\
		=
		\frac{k + 1}{2}
		+
		\frac{1}{4}
		\int_{\mathbb{D}} g(x) \cdot B_k(x, x) \cdot d \lambda(x).
	\end{multline}
	But then from the construction of $\alpha_i$, $\beta_i$, we obtain that for any $i \in \nat$, we have
	\begin{equation}
	\begin{aligned}
		&
		\int_{\mathbb{D}} g(x) \cdot B_{\alpha_i}(x, x) \cdot d \lambda(x)
		\geq
		\int_{A_i} B_{\alpha_i}(x, x) \cdot d \lambda(x)
		\geq
		\frac{2}{3} \cdot (\alpha_i + 1)
		\\
		&
		\int_{\mathbb{D}} g(x) \cdot B_{\beta_i}(x, x) \cdot d \lambda(x)
		\leq
		(\beta_i + 1) - \int_{C_i} B_{\beta_i}(x, x) \cdot d \lambda(x)
		\leq
		\frac{1}{3} \cdot (\beta_i + 1),
	\end{aligned}
	\end{equation}
	which shows that the sequence of measures $\frac{1}{k + 1} B_k[W, \mu, h^L](x, x) d \mu(x)$ does not converge weakly, as $k \to \infty$.
	Note that the Kodaira-Iitaka map associated with this linear series is precisely the projection $\pi$, which is not birational.
	\par 
	\textbf{Example.} Non-differentiability of the relative Monge-Ampère $\kappa$-energy.
	\par
	Consider the manifold $X = \mathbb{P}^1 \times \mathbb{P}^1$, endowed with a line bundle $L := \pi^* \mathscr{O}(1)$, where $\pi : X \to \mathbb{P}^1$ is the projection to the first component.
	Let us consider the complete linear series $W_k = H^0(X, L^{\otimes k})$.
	Note that the map $\pi^* : H^0(\mathbb{P}^1, \mathscr{O}(1)^{\otimes k}) \to H^0(X, L^{\otimes k})$ is an isomorphism.
	\par 
	We fix a metric $h^L$ on $L$, and denote by $h^L_t$ the restriction of $h^L$ to $\mathbb{P}^1 \times \{ t \}$.
	We define the metric $\pi_* h^L$ on $\mathscr{O}(1)$ as $\pi_* h^L := \sup_{t \in \mathbb{P}^1} h^L_t$.
	It is then immediate to see from Section \ref{sect_alt_q_sch} that 
	\begin{equation}\label{eq_env_expl_calc}
		P[W, h^L] = \pi^* P[ \pi_* h^L],
	\end{equation}
	where $P[ \pi_* h^L]$ is defined as in (\ref{eq_sic_env}).
	\par 
	Now, we fix a continuous strictly positive function $f$ on $\mathbb{P}^1$ and construct a function $g$ on $X$ which restricts to $f$ over $\mathbb{P}^1 \times \{0\}$, to $-f$ over $\mathbb{P}^1 \times \{\infty\}$ and such that for any $x, t \in \mathbb{P}^1$, we have $|g(x, t)| \leq |f(x)|$. 
	We fix an arbitrary positive metric $h^{\mathscr{O}(1)}$ on $\mathscr{O}(1)$ and consider the pullback metric $h^L := \pi^* h^{\mathscr{O}(1)}$.
	We claim that the relative Monge-Ampère $\kappa$-energy is not differentiable in the direction of $g$ at $h^L$.
	To see this, remark fist that by (\ref{eq_env_expl_calc}), for any metrics $h^L_0$, $h^L_1$ on $L$, the relative Monge-Ampère $\kappa$-energy is related with the relative Monge-Ampère energy as follows 
	\begin{equation}\label{eq_diff_energy_rel_ex}
		\mathscr{E}_{\kappa}(P[W, h^L_0]) - \mathscr{E}_{\kappa}(P[W, h^L_1])
		=
		\mathscr{E}(P[\pi_* h^L_0]) - \mathscr{E}(P[\pi_* h^L_1]).
	\end{equation}
	Now, from the description before, we conclude that 
	\begin{equation}\label{eq_diff_energy_rel_ex2}
		P[\pi_* h^L \cdot \exp(t g)]
		=
		P[h^{\mathscr{O}(1)} \cdot \exp(|t| f)].
	\end{equation}
	Note that the relative Monge-Ampère energy is differentiable by \cite{BermanBouckBalls}, and we have
	\begin{equation}
		\frac{d}{dt}\Big|_{t = 0}
		\Big(
		\mathscr{E}(P[h^{\mathscr{O}(1)} \cdot \exp(t f)])
		-
		\mathscr{E}(P[h^{\mathscr{O}(1)}])
		\Big)
		=
		\int f \cdot c_1(\mathscr{O}(1), h^{\mathscr{O}(1)}).
	\end{equation}
	From this, (\ref{eq_diff_energy_rel_ex}) and (\ref{eq_diff_energy_rel_ex2}),  we see that the right and left derivatives of the relative Monge-Ampère $\kappa$-energy at $t = 0$ do not coincide.
	\par 
	Calculations analogous to the above show that the relative Monge-Amp\`ere $\kappa$-energy at $h^L_0$ is differentiable only along directions given by pull-backs from the base of $\pi$.
	\par 
	\begin{rem}
		As the reader will easily verify, the second example can be adapted to show that the partial Bergman kernels do not converge even for complete linear series.
	\end{rem}

	\subsection{Partial differentiability of the relative Monge-Ampère $\kappa$-energy}\label{sect_different}
	The main goal of this section is to investigate the partial differentiability of the relative Monge-Ampère $\kappa$-energy and to deduce Theorem \ref{thm_part_bk_get} from this.
	\par 
	To state our partial differentiability result, we fix a complex analytic space $Z$ with a holomorphic map $\rho : X \to Z$ which factorizes, for $m \in \nat$ divisible enough, through the rational maps associated with the linear series as in (\ref{eq_rat_maps_z}).
	\begin{thm}\label{thm_differentiab}
		For any continuous $f : Z \to \real$, the relative Monge-Ampère $\kappa$-energy at a fixed continuous metric is differentiable in the direction $\rho^* f$, and in the notations (\ref{eq_quil_pushed_y}), we have 
		\begin{multline}
			\frac{d}{dt}\Big|_{t = 0}
			\Big(
			\mathscr{E}_{\kappa}(P[W, K, h^L \cdot \exp(- 2 t \rho^* f)])
			-
			\mathscr{E}_{\kappa}(P[W, K, h^L])
			\Big)
			\\
			=
			\int_Z f(x) \cdot d \mu_{\mathrm{eq}}[W, Z, K, h^L](x).
		\end{multline}
	\end{thm}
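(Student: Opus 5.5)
The plan is to reduce to a fixed level $m$ of the alternative quantization scheme of Section \ref{sect_alt_q_sch}, where the twist by $\rho^* f$ becomes an honest twist on $\widehat{Y}_m$. Then I would invoke Berman–Boucksom differentiability there, and finally pass to the limit $m \to \infty$ via an integrated form of the derivative.

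\textbf{Step 1 (twist commutes with pushforward).} Write $h^L_t := h^L \cdot \exp(-2 t \rho^* f)$. Since $\rho$ factors through $\varphi_m$ as in (\ref{eq_rat_maps_z}), after passing to the resolution (\ref{eq_blow_up_kod}) the function $\pi_m^* \rho^* f$ equals $\widehat{\varphi}_m^* f_m$ for the continuous function $f_m := (\psi_m \circ p_m)^* f$ on $\widehat{Y}_m$. In particular it is constant on the fibers of $\widehat{\varphi}_m$. Unwinding (\ref{eq_max_metric}) then gives
\begin{equation*}
\widehat{\varphi}_{m, *} (\widehat{K}, \widehat{h}^L_t) = \widehat{\varphi}_{m, *} (\widehat{K}, \widehat{h}^L) \cdot \exp(-2 t m f_m).
\end{equation*}
I would consider the concave functions $g_m(t) := m^{-\kappa(W)-1}\big(\mathscr{E}(P[\widehat{\varphi}_{m, *} (\widehat{K}, \widehat{h}^L_t)]_*) - \mathscr{E}(P[\widehat{\varphi}_{m, *} (\widehat{K}, \widehat{h}^L)]_*)\big)$. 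By Lemma \ref{lem_ma_kappa_ma_usual}, $g_m(t) \to g(t)$, the $\kappa$-energy difference in the statement.

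\textbf{Step 2 (differentiability at level $m$).} I would show that $g_m$ is differentiable at every $t$, with
\begin{equation*}
g_m'(t) = \frac{1}{\varphi_*[\omega]^{n-\kappa(W)}} \int_X \rho^* f \cdot c_1(L, P[W_m, K, h^L_t])^{\kappa(W)} \wedge \omega^{n-\kappa(W)}.
\end{equation*}
On $\widehat{Y}_m$ this is the Berman–Boucksom formula: the derivative of $\mathscr{E}(P[h \, e^{-2tm f_m}])$ equals $m \int f_m \, c_1(\widehat{A}_m, P[\cdot]_*)^{\kappa}$. It is then transported to $X$ by Proposition \ref{prop_proj_fla} exactly as in the proof of Lemma \ref{lem_ma_kappa_ma_usual}. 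This step is the main obstacle, because Berman–Boucksom treat continuous metrics while $\widehat{\varphi}_{m,*}(\widehat{K}, \widehat{h}^L)$ is only an upper semicontinuous semimetric bounded below on a non-pluripolar set (Lemma \ref{lem_bnd_below_mettr}). I would try the approximation of Lemma \ref{lem_bb_sing}: sandwich the semimetric between decreasing and increasing continuous metrics. The derivatives at those levels are integrals of the continuous function $f_m$ against Monge–Ampère measures, which converge by Theorem \ref{thm_cont_begz2}. Concavity then upgrades the convergence of these concave functions to convergence of derivatives at points of differentiability of the limit. Should this fail, a fallback is to prove only the integrated identity $g_m(t) = \int_0^t(\cdots)\,ds$, which is all that Step 3 uses.

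\textbf{Step 3 (limit $m \to \infty$).} Since $|g_m'| \leq \sup|f| \cdot \mathrm{const}$ uniformly, we have $g_m(t) = \int_0^t g_m'(s)\,ds$. By Lemma \ref{lem_decr_quant_sch}, Theorem \ref{prop_two_quant_sch} and Theorem \ref{thm_cont_begz} (applied along $m = 2^k$), we get $g_m'(s) \to D(s) := \frac{1}{\varphi_*[\omega]^{n-\kappa(W)}}\int \rho^* f \, c_1(L, P[W, K, h^L_s])^{\kappa(W)} \wedge \omega^{n-\kappa(W)}$ for each $s$. Dominated convergence gives $g(t) = \int_0^t D(s)\, ds$, so it remains to show that $D$ is continuous at $0$. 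For this, note that the potentials of $P[W,K,h^L_s]$ and $P[W,K,h^L]$ differ by at most $|s| \sup|f|$, by monotonicity of the construction. Hence these currents have the same singularity type and converge uniformly as $s \to 0$, and the mixed non-pluripolar measures converge weakly; I would prove this by sandwiching with monotone sequences and applying Theorem \ref{thm_cont_begz}. Finally, by the definition (\ref{eq_quil_pushed_y}), $D(0) = \int_Z f \, d\mu_{\mathrm{eq}}[W,Z,K,h^L]$, which gives the claim.
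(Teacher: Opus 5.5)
Your argument is correct, but it orders the limits differently from the paper.

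\textbf{The paper's route.} It first proves the estimate (\ref{eq_approx_diff_kappa_relma}), i.e. $|g(t) - g_m(t)| \leq \epsilon |t|$ uniformly in $t$. This comes from volumes of unit balls (Theorem \ref{thm_sym_subalgebra}, Lemma \ref{lem_comp_rest}, Theorem \ref{thm_volumes_ratio}, Lemma \ref{lem_bb_sing}). It then differentiates $g_m$ only at $t = 0$, via Theorem \ref{thm_diff_sing}. That result is a Lu--Nguyen type argument for usc obstacles, resting on concavity and on the fact that $c_1(\widehat{A}_m, P[\cdot]_*)^{\kappa}$ puts no mass off the contact set. Finally it lets $m \to \infty$ only in $g_m'(0)$.

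\textbf{Your route.} You use only the pointwise convergence $g_m \to g$ of Lemma \ref{lem_ma_kappa_ma_usual}, an integrated identity at each level $m$, and dominated convergence. This avoids volumes of balls and the orthogonality relation: at level $m$, decreasing continuous approximations, Berman--Boucksom and Theorem \ref{thm_cont_begz2} suffice. The price is that you need continuity at $s = 0$ of the singular mixed measures $c_1(L, P[W, K, h^L_s])^{\kappa(W)} \wedge \omega^{n - \kappa(W)}$ on $X$. There the class need not be big and the potentials are unbounded. The paper's uniform-in-$t$ estimate lets it use continuity only on $\widehat{Y}_m$, where the potentials are bounded.

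\textbf{Points to tighten.}
\begin{itemize}
\item Pointwise convergence of concave functions gives convergence of derivatives only where the limit is differentiable. So your Step 2 does not by itself prove differentiability of $g_m$. Through a.e.\ differentiability it gives exactly the integrated identity, which is all Step 3 uses.
\item Only the decreasing approximations $h^0_i \downarrow \widehat{\varphi}_{m,*}(\widehat{K}, \widehat{h}^L)$ should be used in Step 2. The increasing ones from Lemma \ref{lem_bb_sing} converge to $P[\cdot]_*$ rather than to the semimetric itself. Their twisted envelopes therefore have the wrong limit for $t \neq 0$.
\item Continuity of $D$ at $0$ cannot come from sandwiching potentials by monotone sequences: pointwise bounds on potentials do not compare Monge--Amp\`ere measures. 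Use instead the convergence-in-capacity version of Darvas--Di Nezza--Lu quoted in the proof of Theorem \ref{thm_cont_begz}. Uniform convergence implies convergence in capacity. Since $|u_s - u_0| \leq |s| \sup |f|$ preserves the singularity type, (\ref{eq_increasing_mass_cond}) holds with equality by Remark \ref{rem_sam_sing_type}. Non-big classes are handled by passing to $\alpha + \delta \omega$ and using multilinearity.
\item $\psi_m \circ p_m$ is only rational, so $\widehat{Y}_m$ and $\widehat{X}_m$ must also resolve it, as the paper does after (\ref{eq_env_birrrr}).
\item In Step 3, $m$ should run through $m_0 2^k$, where $\rho$ factors through $\varphi_{m_0}$, rather than through $2^k$.
\end{itemize}
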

	\begin{rem}\label{rem_indep_mueq}
		a)
		Since the relative Monge-Ampère $\kappa$-energy is independent from the choice of $\omega$ by Remark \ref{rem_ma_kappa_ma_usual}, the same can be said about $\mu_{\mathrm{eq}}[W, Z, K, h^L]$ by Theorem \ref{thm_differentiab}.
		\par 
		b) In particular, if $W$ is birational, then the relative Monge-Ampère $\kappa$-energy at a fixed continuous metric is differentiable in every continuous direction, as one can take $Z = X$.
	\end{rem}
	Before proceeding with a proof of Theorem \ref{thm_differentiab}, let us explain how it implies Theorem \ref{thm_part_bk_get}.
	For this, we shall use the following lemma, the proof of which the reader can find in \cite[Lemma 6.6]{BermanBouckBalls} or in \cite{YuanEquidistr}.
	\begin{lem}\label{lem_yuan}
		Let $f_k: \real \to \real$, $k \in \nat$, be a sequence of concave functions, and let $g: \real \to \real$ be such that $\liminf_{k \to \infty} f_k \geq g$ and $\lim_{k \to \infty} f_k(0) = g(0)$. 
		If $f_k$ and $g$ are differentiable at $0$, then $\lim_{k \to \infty} f'_k(0) = g'(0)$.
	\end{lem}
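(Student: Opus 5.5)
The plan is to use only the supporting-line inequality for concave functions, evaluated at points $t \neq 0$, and then let $t \to 0$ from each side using the differentiability of $g$ at $0$. No convexity of $g$ is needed; everything rests on the concavity of the $f_k$.

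\emph{Supporting line.} Since each $f_k$ is concave and differentiable at $0$, the tangent line at $0$ lies above the graph:
\begin{equation*}
	f_k(t) \leq f_k(0) + t \cdot f_k'(0), \qquad \text{for all } t \in \real,\ k \in \nat.
\end{equation*}

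\emph{Lower bound, $t > 0$.} Take $\liminf_{k \to \infty}$ of the inequality above. Using $\liminf_k f_k \geq g$ and $f_k(0) \to g(0)$ (a convergent summand passes through the $\liminf$), we get
\begin{equation*}
	g(t) \leq g(0) + t \cdot \liminf_{k \to \infty} f_k'(0).
\end{equation*}
Dividing by $t > 0$ gives $\frac{g(t) - g(0)}{t} \leq \liminf_{k} f_k'(0)$. Letting $t \to 0^+$ yields $g'(0) \leq \liminf_{k} f_k'(0)$.

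\emph{Upper bound, $t < 0$.} For fixed $t < 0$ we have $\liminf_k \big(t f_k'(0)\big) = t \cdot \limsup_k f_k'(0)$. The same step therefore gives
\begin{equation*}
	g(t) \leq g(0) + t \cdot \limsup_{k \to \infty} f_k'(0).
\end{equation*}
Dividing by $t < 0$ reverses the inequality: $\frac{g(t) - g(0)}{t} \geq \limsup_{k} f_k'(0)$. Letting $t \to 0^-$ yields $g'(0) \geq \limsup_{k} f_k'(0)$.

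\emph{Conclusion.} Combining the two bounds,
\begin{equation*}
	\limsup_{k} f_k'(0) \leq g'(0) \leq \liminf_{k} f_k'(0),
\end{equation*}
so $\lim_{k \to \infty} f_k'(0) = g'(0)$.

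The only point needing care is that $\liminf_k f_k'(0)$ or $\limsup_k f_k'(0)$ might a priori be infinite. The two displayed inequalities remain valid in $[-\infty, +\infty]$: for instance, $g(t) > -\infty$ forces $\liminf_k f_k'(0) > -\infty$ in the $t > 0$ case. The finite value $g'(0)$ then pins both quantities down, so this presents no real obstacle.
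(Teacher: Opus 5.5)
Your proof is correct and is essentially the standard argument. The paper gives no proof of its own and defers to \cite[Lemma 6.6]{BermanBouckBalls} (or Yuan), where exactly this tangent-line inequality $f_k(t) \le f_k(0) + t f_k'(0)$ is passed to the $\liminf$ and then $t \to 0^{\pm}$ is taken. Your remark on the possibly infinite $\liminf$ and $\limsup$ of $f_k'(0)$ is a correct refinement.
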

	\begin{proof}[Proof of Theorem \ref{thm_part_bk_get}]
		Without loosing the generality, we assume that the measure $\mu$ is a probability measure.
		For any $k \in \nat$, $t \in \real$, let us define the following functions
		\begin{equation}
		\begin{aligned}
			&
			f_k(t) 
			:= 
			\frac{1}{k^{\kappa(W) + 1}} 
				\log \Big(  \frac{\vol(\mathbb{B}_k[W, \mu, h^L \cdot \exp(- 2 t \rho^* f)])}{\vol(\mathbb{B}_k[W, \mu, h^L])}			
			\Big),
			\\
			&
			g(t) 
			:= 
			\mathscr{E}_{\kappa}(P[W, K, h^L \cdot \exp(- 2 t \rho^* f)])
			-
			\mathscr{E}_{\kappa}(P[W, K, h^L]),
		\end{aligned}
		\end{equation}
		where $\mathbb{B}_k[W, \mu, h^L_0]$ denotes the unit ball associated with the norm ${\textrm{Hilb}}_k[W](h^L_0, \mu)$ for a metric $h^L_0$ on $L$.
		As $\mu$ is a probability measure, we obviously have $\mathbb{B}_k[W, K, h^L \cdot \exp(- 2 t \rho^* f)] \subset \mathbb{B}_k[W, \mu, h^L \cdot \exp(- 2 t \rho^* f)]$.
		From this, the Bernstein-Markov assumption on $\mu$ and Theorem \ref{thm_volumes_ratio}, we conclude that $\liminf_{k \to \infty} f_k \geq g$.
		It is also trivially true that $\lim_{k \to \infty} f_k(0) = g(0) = 0$.
		\par 
		By Theorem \ref{thm_differentiab}, $g$ is differentiable at $t = 0$, and we have
		\begin{equation}
			g'(0) 
			=
			\int_Z f(x) \cdot d \mu_{\mathrm{eq}}[W, Z, K, h^L](x).
		\end{equation}
		An easy calculation, cf. \cite[Lemma 4.1]{BermanBouckBalls}, shows that $f_k$ are also differentiable at $t = 0$, and we have
		\begin{equation}
			f'_k(t)
			=
			\frac{1}{k^{\kappa(W)}}
			\int_X
			\rho^* f(x) \cdot 
			B_k[W, \mu, h^L](x, x) \cdot d\mu(x).
		\end{equation}
		It follows also from \cite[Proposition 2.4]{BerBoucNys} that $f_k(t)$ is concave.
		Lemma \ref{lem_yuan} now finishes the proof of Theorem \ref{thm_part_bk_get}.
	\end{proof}
	\par 
	The proof of Theorem \ref{thm_differentiab} relies on a singular version of the differentiability of the relative Monge-Ampère energy due to Berman-Boucksom \cite{BermanBouckBalls} that we now establish.
	We fix a complex manifold $\widehat{Y}$ of dimension $\kappa$ and a big line bundle $\widehat{A}$ over it.
	Consider an upper semicontinuous semimetric $\widehat{h}^A$, on $\widehat{A}$ which is bounded from below over a non-pluripolar subset by a continuous metric.
	By Lemma \ref{lem_env_sing_vers}, $P[\widehat{h}^A]_*$ has a psh potentials.	
	\begin{thm}\label{thm_diff_sing}
		The relative Monge-Ampère energy at $\widehat{h}^A$ is differentiable in continuous directions.
		More precisely, for any continuous $f: \widehat{Y} \to \real$, we have
		\begin{equation}
			\frac{d}{dt}\Big|_{t = 0}
			\Big(
			\mathscr{E}(P[\widehat{h}^A \cdot \exp(- 2t f)])
			-
			\mathscr{E}(P[\widehat{h}^A])
			\Big)
			=
			\int_{\widehat{Y}} f(x) \cdot c_1(\widehat{A}, P[\widehat{h}^A]_*)^{\kappa}.
		\end{equation}
	\end{thm}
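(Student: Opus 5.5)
Our plan is to reduce Theorem \ref{thm_diff_sing} to the continuous case treated by Berman-Boucksom \cite{BermanBouckBalls}. We use their differentiability statement for continuous metrics, together with concavity and a two-sided approximation of $\widehat{h}^A$ by continuous metrics, exactly as in the proofs of Lemmas \ref{lem_bb_sing} and \ref{lem_twist_rho}. Write $F(t) := \mathscr{E}(P[\widehat{h}^A e^{-2tf}]_*) - \mathscr{E}(P[\widehat{h}^A]_*)$. By Lemma \ref{lem_bb_sing}, $F(t)$ is the limit of $\frac{1}{k^{\kappa+1}}\log$ of ratios of volumes of sup-norm unit balls (with $Y = \widehat{Y}$). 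It is therefore a limit of concave functions of $t$, since the log-volume of the sup-norm ball of $\widehat{h}^A e^{-2tf}$ is concave in $t$ (cf. \cite[Proposition 2.4]{BerBoucNys}), and hence $F$ is concave. So it suffices to compute the one-sided derivatives and show that they agree.

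The first step is to establish, for $t$ of either sign, the expansion
\begin{equation*}
F(t) = t \int_{\widehat{Y}} f \cdot c_1(\widehat{A}, P[\widehat{h}^A]_*)^{\kappa} + o(t).
\end{equation*}
One inequality is soft. Concavity of $\mathscr{E}$ along affine paths of psh potentials (\cite[Proposition 10.28]{GuedjZeriahBook}) gives $\mathscr{E}(\phi_1) - \mathscr{E}(\phi_0) \leq \int (\phi_1 - \phi_0)\, \mathrm{MA}(\phi_0)$. We apply it with $\phi_0 = P[\widehat{h}^A]_*$ and $\phi_1 = P[\widehat{h}^A e^{-2tf}]_*$. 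The key observation is the orthogonality property: $\mathrm{MA}(P[\widehat{h}^A]_*)$ is supported on the contact set $\{P[\widehat{h}^A]_* = \widehat{h}^A\}$ (up to a pluripolar set). On that set, $P[\widehat{h}^A e^{-2tf}]_* \leq \widehat{h}^A e^{-2tf}$ gives the potential difference $\phi_1 - \phi_0 \leq t f$ (in potential units). This yields $F(t) \leq t\int f\, \mathrm{MA}(P[\widehat{h}^A]_*)$ for all $t$. Combined with concavity of $F$, this upper bound by a linear function that is tangent at $0$ already forces differentiability with the claimed derivative, because a concave function lying below a line through the origin touching it at $0$ has that line as its unique supporting line only if the bound is attained to first order. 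So the real content is the orthogonality statement for upper semicontinuous semimetrics.

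The main obstacle is therefore orthogonality: $\int (\widehat{h}^A\text{-potential} - P[\widehat{h}^A]_*\text{-potential})\,\mathrm{MA}(P[\widehat{h}^A]_*) = 0$ when $\widehat{h}^A$ is only upper semicontinuous and bounded below on a non-pluripolar set. We would take continuous $\widehat{h}^A_i \searrow \widehat{h}^A$. For these, orthogonality holds by \cite{BermanBouckBalls}, and $P[\widehat{h}^A_i]$ decreases to $P[\widehat{h}^A]$ off a pluripolar set by Lemma \ref{lem_env_sing_vers}. Since all potentials have minimal singularities, Theorem \ref{thm_cont_begz2} gives weak convergence of $\mathrm{MA}(P[\widehat{h}^A_i])$ against quasi-continuous bounded test functions. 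The difference $\widehat{h}^A_i/P[\widehat{h}^A_i]$ is only upper semicontinuous in the limit, so we would pair with a fixed $\widehat{h}^A_j$ ($j \leq i$). Lower semicontinuity of the limit measure integral then gives $\int (u_j - P)\,\mathrm{MA}(P) \leq \liminf_i \int (u_j - P_i)\,\mathrm{MA}(P_i) \leq \liminf_i \int (u_j - u_i)\,\mathrm{MA}(P_i)$. Letting $j \to \infty$ with monotone convergence should close the argument.

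If the support argument proves too delicate, a fallback is to avoid it. We would apply Lemma \ref{lem_yuan} directly to the concave functions $F_i(t)$ built from $\widehat{h}^A_i$, which are differentiable by \cite{BermanBouckBalls}. This requires $\liminf_i F_i \geq F$, which would follow from monotonicity of energy in the approximation, as in (\ref{eq_e_cont_incr}). Their derivatives $\int f\,\mathrm{MA}(P[\widehat{h}^A_i])$ converge by Theorem \ref{thm_cont_begz2}. However, Lemma \ref{lem_yuan} presupposes differentiability of the limit, so this route also ultimately needs a one-sided bound of the type above.
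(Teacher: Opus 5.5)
Your proof has a genuine gap at the step where you deduce differentiability from the upper bound. Set $c := \int_{\widehat{Y}} f \cdot c_1(\widehat{A}, P[\widehat{h}^A]_*)^{\kappa}$. The facts $F(0)=0$, $F$ concave and $F(t)\le ct$ for all $t$ only say that $c$ is a supergradient, i.e.\ $F'_+(0)\le c\le F'_-(0)$. They do not give $F'_+(0)=F'_-(0)$. For example, $F(t)=-|t|$ with $c=0$ satisfies all three conditions and is not differentiable; the relative Monge-Ampère $\kappa$-energy in Section \ref{sect_no_weak} is a concrete concave functional with such a kink. So orthogonality at $t=0$, which you single out as the real content, gives only half of the proof. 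Your fallback does not close the gap either: Lemma \ref{lem_yuan} takes differentiability of the limit as a hypothesis, and an upper bound of this type cannot supply it.

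The missing half is the reverse first-order estimate, and this is how the paper argues. Let $V_t$ be the potential of $P[\widehat{h}^A e^{-2tf}]_*$ and $u$ that of $\widehat{h}^A$, so the obstacle for $V_t$ is $u+tf$. Concavity of $\mathscr{E}$ also gives the other tangent inequality $\mathscr{E}(V_t)-\mathscr{E}(V_0)\ge\int (V_t-V_0)\,\mathrm{MA}(V_t)$. To exploit it, you need orthogonality for the perturbed envelopes, not only at $t=0$. The measure $\mathrm{MA}(V_t)$ is carried by $\{V_t=u+tf\}$, and $V_0\le u$ off a pluripolar set. Hence $V_t-V_0\ge tf$ holds $\mathrm{MA}(V_t)$-almost everywhere, and $F(t)\ge t\int f\,\mathrm{MA}(V_t)$. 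Combined with your upper bound, this gives $\int f\,\mathrm{MA}(V_t)\le F(t)/t\le c$ for $t>0$, with the inequalities reversed for $t<0$. Since $\sup|V_t-V_0|\le |t|\sup|f|$, continuity of the non-pluripolar Monge-Ampère operator under uniform convergence (as in \cite[Theorem 11.11]{GuedjZeriahBook}) gives $\int f\,\mathrm{MA}(V_t)\to c$. So both one-sided derivatives equal $c$.

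The paper gets orthogonality for every $t$ at once from \cite[Lemma 2.3]{GuedjLuZeriahEnv}, because each obstacle $u+tf$ is lower semicontinuous; your approximation argument by continuous $\widehat{h}^A_i$ is therefore unnecessary. Incidentally, concavity of $F$ does not require the detour through sup-norm balls. There, \cite[Proposition 2.4]{BerBoucNys} is an $L^2$ statement and Lemma \ref{lem_bb_sing} assumes $A$ ample. Concavity follows directly from pointwise concavity of $t\mapsto V_t$ together with monotonicity and concavity of $\mathscr{E}$.
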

	\begin{rem}
		The authors of \cite{BermanBouckBalls} established Theorem \ref{thm_diff_sing} for continuous metrics  $\widehat{h}^A$.
		In our proof, we follow closely the approach of Lu-Nguyen \cite{LuNgueynDifferent}, which simplifies the original proof.
	\end{rem}
	\begin{proof}
		We will be brief and only indicate the necessary changes, see \cite[Theorem 11.11]{GuedjZeriahBook} for details.
		By the concavity of the relative Monge-Ampère energy, we deduce that if $t > 0$, then
		\begin{equation}
			\frac{\mathscr{E}(P[\widehat{h}^A \cdot \exp(- 2t f)]_*) - \mathscr{E}(P[\widehat{h}^A]_*)}{t}
			\leq
			\int_{\widehat{Y}} f(x) \cdot c_1(\widehat{A}, P[\widehat{h}^A]_*)^{\kappa}.
		\end{equation}
		Again, the concavity implies that if $t > 0$, then
		\begin{multline}
			\frac{\mathscr{E}(P[\widehat{h}^A \cdot \exp(- 2t f)]_*) - \mathscr{E}(P[\widehat{h}^A]_*)}{t}
			\\
			\geq
			\int_{\widehat{Y}} \frac{1}{t} \log \Big(\frac{P[\widehat{h}^A]_*}{P[\widehat{h}^A \cdot \exp(- 2t f)]_*} \Big) \cdot c_1(\widehat{A}, P[\widehat{h}^A \cdot \exp(- 2 t f)]_*)^{\kappa}.
		\end{multline}
		However, by the upper semicontinuity of $\widehat{h}^A$ and \cite[Lemma 2.3]{GuedjLuZeriahEnv}, cf. \cite{BedfordTaylor}, the measure $c_1(\widehat{A}, P[\widehat{h}^A \cdot \exp(- 2 t f)]_*)^{\kappa}$ puts no mass  outside of the set $P[\widehat{h}^A \cdot \exp(- 2t f)]_* = \widehat{h}^A \cdot \exp(- 2t f)$.
		By this and the fact that $P[\widehat{h}^A]_* \geq \widehat{h}^A$ outside of a pluripolar subset, we conclude that 
		\begin{equation}
			\frac{\mathscr{E}(P[\widehat{h}^A \cdot \exp(- 2t f)]_*) - \mathscr{E}(P[\widehat{h}^A]_*)}{t}
			\geq
			\int_{\widehat{Y}} f(x) \cdot c_1(\widehat{A}, P[\widehat{h}^A \cdot \exp(- 2 t f)]_*)^{\kappa}.
		\end{equation}
		The proof is now finished by using the continuity of the Monge-Ampère operator with respect to uniform convergence of $P[\widehat{h}^A \cdot \exp(- 2 t f)]_*$ towards $P[\widehat{h}^A]_*$, as $t \to 0$, in exactly the same manner as in \cite[Theorem 11.11]{GuedjZeriahBook}.
	\end{proof}
	\begin{proof}[Proof of Theorem \ref{thm_differentiab}]
		Let us first establish that in the notations as in Lemma \ref{lem_ma_kappa_ma_usual}, for every $\epsilon > 0$, there is $m \in \nat^*$ that is divisible by any prescribed positive integer, so that for any $t \in \real$, we have
		\begin{multline}\label{eq_approx_diff_kappa_relma}
			\Big|
				\big(
				\mathscr{E}_{\kappa}(P[W, K, h^L \cdot \exp(- 2 t \rho^* f)])
				-
				\mathscr{E}_{\kappa}(P[W, K, h^L])
				\big)
				\\
				-
				\frac{1}{m^{\kappa(W) + 1}}
				\big(
				\mathscr{E}(P[\widehat{\varphi}_{m, *} (\widehat{K}, \widehat{h}^L \cdot \exp(- 2 t \rho^* f))]_*)
				-
				\mathscr{E}(P[\widehat{\varphi}_{m, *} (\widehat{K}, \widehat{h}^L]_*)
				\big)
			\Big|
			\leq 
			\epsilon \cdot |t|.
		\end{multline}
		\par 
		For this, note that in the notations of (\ref{eq_thm_volumes_ratio_0}), by Theorem \ref{thm_sym_subalgebra}, Lemma \ref{lem_comp_rest} and (\ref{eq_thm_volumes_ratio_1}), for every $\epsilon > 0$, there are $m \in \nat$, $k_0 \in \nat$, so that for any $t \geq 0$, $k \geq k_0$, we have
		\begin{multline}
			\Big|
			\log \Big(\frac{\vol(\mathbb{B}_{km}[W, K, h^L  \cdot \exp(- 2 t \rho^* f)])}{\vol(\mathbb{B}_{km}[W, K, h^L])} \Big)
			\\
			-
			\log\Big( 
			\frac{\vol(\mathbb{B}_k[Y_m, \widehat{\varphi}_{m, *} (\widehat{K}, \widehat{h}^L \cdot \exp(- 2 t \rho^* f))])}{\vol(\mathbb{B}_k[Y_m, \widehat{\varphi}_{m, *} (\widehat{K}, \widehat{h}^L)])}
			\Big)
			\Big|
			\\
			\leq
			80 \cdot (km)^{\kappa(W)} \cdot \log(km)
			+
			\epsilon \cdot |t| \cdot (km)^{\kappa(W) + 1}.
		\end{multline}
		The estimate (\ref{eq_approx_diff_kappa_relma}) then follows from this by dividing by $(km)^{\kappa(W) + 1}$, taking a limit $k \to \infty$, and using Theorem \ref{thm_volumes_ratio} and Lemma \ref{lem_bb_sing}.
		\par 
		By our assumption (\ref{eq_rat_maps_z}) on $Z$, we immediately have
		\begin{equation}\label{eq_env_birrrr}
			P[\widehat{\varphi}_{m, *} (\widehat{K}, \widehat{h}^L \cdot \exp(- 2 t \rho^* f))]
			=
			P[\widehat{\varphi}_{m, *} (\widehat{K}, \widehat{h}^L) \cdot \exp(- 2 t \psi_m^* f)],
		\end{equation}
		where $\psi_m^* f$ is a continuous function defined on a birational model of $\widehat{Y}_m$, given by the resolution of the indeterminacies of the rational map $\widehat{Y}_m \dashrightarrow Z$.
		Note that despite the fact that $\psi_m^* f$ is defined on a birational model of $\widehat{Y}_m$, the envelope (\ref{eq_env_birrrr}) can be seen as a metric on $\widehat{Y}_m$ by the argument similar to the one after (\ref{eq_hlm_defn}).
		By this, Theorem \ref{thm_diff_sing} and (\ref{eq_approx_diff_kappa_relma}), we deduce that 
		\begin{multline}\label{eq_bound_der_ekap_e_us}
			\limsup_{t \to \infty}
			\frac{\mathscr{E}_{\kappa}(P[W, K, h^L \cdot \exp(- 2 t \rho^* f)])
				-
				\mathscr{E}_{\kappa}(P[W, K, h^L])}{t}
			\\
			\leq
			\epsilon 
			+
			\frac{1}{m^{\kappa(W)}} \cdot
			\int_{\widehat{Y}_m} \psi_m^* f(x) \cdot c_1(\widehat{A}_m, P[\widehat{\varphi}_{m, *} (\widehat{K}, \widehat{h}^L)]_*)^{\kappa(W)},
		\end{multline}		 
		and that a similar bound with an inverse inequality for the $\liminf$ holds.
		Note that since the non-pluripolar product puts no mass on analytic subsets, the right-hand side of (\ref{eq_bound_der_ekap_e_us}) is well-defined.
		By proceeding as in the proof of Lemma \ref{lem_ma_kappa_ma_usual}, we deduce
		\begin{multline}
			\lim_{m \to \infty}
			\frac{1}{m^{\kappa(W)}} \cdot
			\int_{\widehat{Y}_m} \psi_m^* f(x) \cdot c_1(\widehat{A}_m, P[\widehat{\varphi}_{m, *} (\widehat{K}, \widehat{h}^L)]_*)^{\kappa(W)}
			\\
			=
			\int_Z  f(x) \cdot d \mu_{\mathrm{eq}}[W, Z, K, h^L](x).
		\end{multline}
		From this, (\ref{eq_bound_der_ekap_e_us}) and its analogue for the $\liminf$, we deduce the result.
	\end{proof}

	\section{Integral closure of linear series and singularity types}\label{sect_lin_ser_sing_type}
	The main goal of this section is to present an application of Theorems \ref{thm_ki_form} and \ref{thm_vol_form} to the study of the integral closure of linear series -- most notably, to prove Theorem \ref{thm_integ_closuse} -- as well as to investigate linear series constructed from a given singularity type and to prove Theorem \ref{thm_kappa_wphi}. 
	Along the way, we introduce the notion of analytic closure of a linear series and relate it to the integral closure.
	\par 
	\begin{proof}[Proof of Theorem \ref{thm_kappa_wphi}]
		We fix a smooth metric $h^L$ on $L$, let $\alpha = c_1(L, h^L)$, and fix a function $\phi \in \psh(X, \alpha)$. 
		We follow the notations introduced in (\ref{eq_wkphi_defn}).
		\par 
		Recall the following envelope construction of Ross-Witt Nystr{\"o}m \cite{RossWNEnvelop}:
		\begin{equation}
			P[\phi] := 
			\Big(
			\lim_{C \to + \infty}
			\sup\Big\{
				\psi \in \psh(X, \alpha) : \psi \leq \min\{ \phi + C, 0 \}
			\Big\}
			\Big)^*.
		\end{equation}
		As it was established in \cite{RossWNEnvelop}, we have $P[\phi] \in \psh(X, \alpha)$.
		Note also that $P[\phi]$ depends solely on the singularity type of $\phi$, as suggested in the notation.
		\par 
		We define $\phi_k : X \to \real \cup \{ -\infty \}$ so that $FS(\textrm{Ban}_k^{\infty}[W(\phi)](h^L))^{\frac{1}{k}} = h^L \cdot \exp(- 2 \phi_k)$.
		Then immediately from the definitions, we have the following formula
		\begin{equation}\label{eq_phi_k_defn_sup_wkphi}
			 \phi_k(x)
			 =
			 \Big(
			 \sup_{s \in W_k(\phi)} \Big\{
			 	\frac{1}{k} \log |s(x)|_{(h^L)^k} : \sup_{y \in X} |s(y)|_{(h^L)^k} \leq 1
			\Big\}
			\Big)^*.
		\end{equation}
		By the definitions of $W_k(\phi)$, $P[\phi]$ and (\ref{eq_poinc_lll}), we deduce that 
		\begin{equation}
			\phi_k(x) 
			\leq
			P[\phi].
		\end{equation}
		This implies in particular that for $P[\phi](h^L) := h^L \cdot \exp(- 2 P[\phi])$, we have
		\begin{equation}\label{eq_pphi_comp}
			P[W(\phi), h^L] \geq P[\phi](h^L).
		\end{equation}
		Immediately from Theorem \ref{thm_ki_form}, (\ref{eq_nd_defn}) and (\ref{eq_pphi_comp}), we deduce that for $T[\phi] := c_1(L, P[\phi](h^L))$,
		\begin{equation}\label{eq_kappa_nd_thi}
			\kappa(W(\phi)) \leq {\rm{nd}}(T[\phi]).
		\end{equation}
		However, from Theorem \ref{thm_wn_monot}, it is immediate to see, cf. \cite[Remark 3.4]{DarvDiNezLuSingType}, that for any K\"ahler form $\omega$ on $X$ and a $(1, 1)$-current $T := \alpha + dd^c \phi$, we have
		\begin{equation}\label{eq_mass_env_pres}
			\int T[\phi]^i \wedge \omega^{n - i}
			=
			\int T^i \wedge \omega^{n - i}.
		\end{equation}
		In particular, this shows that ${\rm{nd}}(T[\phi]) = {\rm{nd}}(T)$.
		From this and (\ref{eq_kappa_nd_thi}), we deduce the bound $\kappa(W(\phi)) \leq {\rm{nd}}(T)$.
		The proof of the second bound follows immediately from Theorems \ref{thm_vol_form}, \ref{thm_wn_monot}, (\ref{eq_pphi_comp}) and (\ref{eq_mass_env_pres}).
	\end{proof}
	\par 
	Let us now introduce the \textit{analytic closure}, $\widetilde{W}$, of a linear series $W \subset R(X, L)$.
	For this, we proceed as follows.
	We fix a smooth metric $h^L$ on $L$, and write $P[W, h^L] = h^L \cdot \exp(- 2 \phi(W))$. 
	We denote $\alpha := c_1(L, h^L)$; then $\phi(W) \in \psh(X, \alpha)$ by (\ref{eq_poinc_lll}).
	Relying on the construction of the linear series associated with the potential $\phi(W)$, introduced before Theorem \ref{thm_kappa_wphi}, we define
	\begin{equation}
		\widetilde{W} := W(\phi(W)).
	\end{equation}
	Note that since the singularity type does not depend on the choice of $h^L$, the linear series $\widetilde{W}$ is independent of the choice of it as well.
	In the following result, which clearly generalizes Theorem \ref{thm_integ_closuse}, the relative degrees are defined analogously to the definition preceding Theorem \ref{thm_integ_closuse}.
	\begin{thm}\label{thm_anal_clos}
		The analytic closure contains the integral closure, i.e. $\overline{W} \subset \widetilde{W}$.
		Moreover, we have $\kappa(\widetilde{W}) = \kappa(W)$.
		Finally, for any linear series $W_0 \subset R(X, L)$ verifying $W \subset W_0 \subset \widetilde{W}$, we have ${\rm{vol}}_{\kappa}(W_0) = {\rm{deg}}(W_0 : W) \cdot {\rm{vol}}_{\kappa}(W)$.
	\end{thm}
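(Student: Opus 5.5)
The plan is to compare the envelopes of the three linear series $W\subset W_0\subset\widetilde W$ and then read off $\kappa$ and ${\rm{vol}}_\kappa$ from Theorems \ref{thm_ki_form} and \ref{thm_vol_form}. Fix a smooth $h^L$ and write $P[V,h^L]=h^L\cdot\exp(-2\phi(V))$ for any linear series $V$.

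\emph{Step 1: $\overline W\subset\widetilde W$.}
Let $s\in\overline W_k$ satisfy $s^d+a_1s^{d-1}+\cdots+a_d=0$. Only the homogeneous component of degree $dk$ of this relation matters, so we may assume $a_j\in W_{jk}$. The elementary root bound then gives, pointwise,
\begin{equation*}
|s|_{(h^L)^k}\le 2\max_j|a_j|_{(h^L)^{jk}}^{1/j}.
\end{equation*}
Next, by (\ref{eq_fs_norm}) we have $\frac1{jk}\log|a_j|_{(h^L)^{jk}}\le\phi_{jk}+\frac{1}{jk}\log\|a_j\|_{\sup}$. Here $\phi_{jk}$ is the potential of $FS(\textrm{Ban}^\infty_{jk}[W](h^L))^{1/jk}$. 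By Lemma \ref{lem_fs_sm} and Fekete's lemma, the limit in (\ref{eq_env_w}) equals the infimum of these metrics, so $\phi_{jk}\le\phi(W)$. Taking the maximum over the finitely many $j$ yields $\frac1k\log|s|\le\phi(W)+C$, that is, $s\in W_k(\phi(W))=\widetilde W_k$. I expect the step needing most care to be getting the direction of the Fekete inequality right, together with the regularization $(\cdot)_*$; the regularization only enlarges the potential, so the bound survives.

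\emph{Step 2: equal masses.}
By (\ref{eq_fs_monoton2}), enlarging the series lowers the Fubini-Study metrics, so the potentials satisfy $\phi(W)\le\phi(W_0)\le\phi(\widetilde W)$. The proof of Theorem \ref{thm_kappa_wphi}, applied with $\phi=\phi(W)$, gives $\phi(\widetilde W)\le P[\phi(W)]$, via (\ref{eq_pphi_comp}). Theorem \ref{thm_wn_monot} then gives, for every $i$,
\begin{equation*}
\int T_W^i\wedge\omega^{n-i}\le\int T_{W_0}^i\wedge\omega^{n-i}\le\int T_{\widetilde W}^i\wedge\omega^{n-i}\le\int T[\phi(W)]^i\wedge\omega^{n-i}.
\end{equation*}
Here $T_V:=c_1(L,P[V,h^L])$, and the last quantity equals the first by (\ref{eq_mass_env_pres}). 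So all these masses coincide. Since the measures are nonnegative, Theorem \ref{thm_ki_form} yields $\kappa(W)=\kappa(W_0)=\kappa(\widetilde W)$.

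\emph{Step 3: the degree formula.}
Since the dimensions agree, the map $Y_m^{W_0}\dashrightarrow Y_m^{W}$ from (\ref{eq_lin_ser_emb}) is generically finite of degree $d={\rm{deg}}(W_0:W)$. Hence a general fiber of $\varphi_m^W$ is a disjoint union of $d$ general fibers of $\varphi_m^{W_0}$. This gives
\begin{equation*}
\varphi^W_*[\omega]^{n-\kappa}=d\cdot\varphi^{W_0}_*[\omega]^{n-\kappa}.
\end{equation*}
Plugging this and the equality of the $\kappa$-th masses into Theorem \ref{thm_vol_form} for $W$ and $W_0$ gives ${\rm{vol}}_\kappa(W_0)=d\cdot{\rm{vol}}_\kappa(W)$.

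The fiber-counting claim needs a short justification: choose a common resolution and use the birational stability of the maps for $m$ large. I regard this as the other delicate point, though routine.
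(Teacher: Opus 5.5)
Your proof is correct and follows essentially the same route as the paper. It uses a pointwise root bound for the monic relation (the Fujiwara bound, where the paper instead covers $X$ by the sets on which one term of the relation dominates), then the chain of masses obtained from (\ref{eq_fs_monoton2}), (\ref{eq_pphi_comp}), (\ref{eq_mass_env_pres}) and Theorem \ref{thm_wn_monot}, and finally Theorems \ref{thm_ki_form} and \ref{thm_vol_form}, with the ratio of fiber integrals given by the generic degree. The only cosmetic difference is that you read off $\kappa(W)=\kappa(W_0)=\kappa(\widetilde W)$ directly from the equality of all masses, whereas the paper passes through Theorem \ref{thm_kappa_wphi}, which is the same argument.
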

	\begin{rem}
		Note that when $W$ is birational, we immediately have ${\rm{deg}}(W_0 : W) = 1$.
	\end{rem}
	\begin{proof}
		Let us first establish that $W \subset \widetilde{W}$.
		For this, note that $\phi(W)$ admits the following description
		\begin{equation}
			\phi(W)(x)
			=
			\Big(
				\lim_{k \to \infty} \sup_{s \in W_k} \big\{
					\frac{1}{k} \log|s(x)|_{(h^L)^k} : 
					\sup_{y \in X} |s(y)|_{(h^L)^k} \leq 1
				\big\}
			\Big)^*.
		\end{equation}
		In particular, we see that for any $s \in W_k$, there is $C > 0$, so that for any $x \in X$, we have
		\begin{equation}
			\frac{1}{k} \log|s(x)|_{(h^L)^k}
			\leq 
			\phi(W)(x) + C,
		\end{equation}
		implying the inclusion $W \subset \widetilde{W}$.
		\par 
		Let us now establish that $\overline{W} \subset \widetilde{W}$.
		For this, we fix $s \in \overline{W}_k$.
		Then by the definition of $\overline{W}$ there are $r \in \nat^*$, $a_i \in W_{ik}$, $i = 0, \ldots, r - 1$, so that 
		\begin{equation}
			s^r = \sum_{i = 0}^{r - 1} a_{r - i} \cdot s^i.
		\end{equation}
		We consider the subsets $E_i$, $i = 0, \ldots, r - 1$, defined by the condition that $x \in E_i$ if the maximum of the values $|a_{k - j}(x) \cdot s^j(x)|_{(h^L)^{rk}}$ among all $j = 0, \ldots, r - 1$ is achieved at $j = i$.
		Clearly $E_i$, $i = 0, \ldots, r - 1$, cover $X$, and for any $x \in E_i$, we have
		\begin{equation}
			|s(x)^r|_{(h^L)^{rk}} \leq r \cdot |a_{r - i}(x) \cdot s^i(x)|_{(h^L)^{rk}}.
		\end{equation}
		Since $a_i \in W_{ik}$, and $W_{ik} \subset \widetilde{W}_{ik}$, we deduce that there are $C_i > 0$, so that over $E_i$, we have
		\begin{equation}
			\frac{1}{k} \log |s(x)|_{(h^L)^k}
			\leq
			\phi(W)(x) + C_i.
		\end{equation}
		By taking $C := \max_{i = 0}^{r - 1} C_i$, we deduce that the above bound holds for any $x \in X$ with $C$ in place of $C_i$.
		This clearly shows that $s \in \widetilde{W}$, and it implies $\overline{W} \subset \widetilde{W}$.
		\par 
		Note also that we clearly have $\kappa(W) \leq \kappa(\widetilde{W})$.
		On another hand, from Theorem \ref{thm_kappa_wphi}, we deduce that $\kappa(\widetilde{W}) \leq {\rm{nd}}(c_1(L, P[W, h^L]))$.
		But Theorem \ref{thm_ki_form} shows that  ${\rm{nd}}(c_1(L, P[W, h^L])) = \kappa(W)$, finishing the proof of the identity $\kappa(\widetilde{W}) = \kappa(W)$.
		\par 
		To establish the last part, note that by (\ref{eq_fs_monoton2}), we have $P[W, h^L] \geq P[\widetilde{W}, h^L]$.
		From this, (\ref{eq_pphi_comp}) and (\ref{eq_mass_env_pres}), we see that the integral on the right hand side of Theorem \ref{thm_vol_form} coincides for the linear series $W, \widetilde{W}$.
		Since the envelope construction is obviously monotonic, we deduce by Theorem \ref{thm_wn_monot} that the integral term from Theorem \ref{thm_vol_form} coincides for all the linear series $W, W_0, \widetilde{W}$.
		We denote by $Y_m$, $Y_m^0$, $\widetilde{Y}_m$ the closures of the images of $X$ under the rational maps associated with the linear series $W, W_0, \widetilde{W}$ respectively.
		The diagram (\ref{eq_lin_ser_emb}) shows that we have the rational maps $\widetilde{Y}_m \dashrightarrow Y_m^0 \dashrightarrow Y_m$.
		The first part of Theorem \ref{thm_anal_clos} as well as (\ref{eq_dim_ki_yk}), imply that $\dim \widetilde{Y}_m = \dim Y_m^0 = \dim Y_m$, and so the above rational maps are generically finite.
		Immediately from the definitions, we see that the ratio between the denominator contributions in the formulas of Theorem \ref{thm_vol_form} corresponding to $W, W_0, \widetilde{W}$, can be measured as the generic degrees of these maps.
	\end{proof}

\bibliography{bibliography}

		\bibliographystyle{abbrv}

\Addresses

\end{document}